\newtheorem{theorem}{Theorem}[section]
\newtheorem{lemma}[theorem]{Lemma}
\newtheorem{assumption}[theorem]{Assumption}
\newtheorem{definition}[theorem]{Definition}
\newtheorem{remark}[theorem]{Remark}
\numberwithin{equation}{section}
\newenvironment{proof}[1][Proof]{\textbf{#1.} }
{\ \rule{0.75em}{0.75em}\smallskip}
\begin{document}

\begin{center}
\Large\bf A projected gradient method for

$\alpha\ell_{1}-\beta\ell_{2}$ sparsity regularization
\end{center}

\begin{center}
Liang Ding\footnote{Department of Mathematics, Northeast Forestry University, Harbin 150040, China;
e-mail: {\tt dl@nefu.edu.cn}. The work of this author was supported by the Fundamental Research Funds
for the Central Universities (no.\ 2572018BC02), Heilongjiang Postdoctoral Research Developmental Fund
(no.\ LBH-Q16008), the National Nature Science Foundation of China (no.\ 41304093).}\quad and \quad
Weimin Han\footnote{Department of Mathematics, University of Iowa, Iowa City, IA 52242, USA;
e-mail: {\tt weimin-han@uiowa.edu}.}
\end{center}

\smallskip
\begin{quote}
{\bf Abstract.} The non-convex
$\alpha\|\cdot\|_{\ell_1}-\beta\| \cdot\|_{\ell_2}$ $(\alpha\ge\beta\geq0)$ regularization has attracted attention in the field of sparse recovery. One way to obtain a minimizer of this regularization is the ST-($\alpha\ell_1-\beta\ell_2$) algorithm which is similar to the classical iterative soft thresholding algorithm (ISTA).
It is known that ISTA converges quite slowly, and a faster alternative to ISTA is the
projected gradient (PG) method. However, the conventional PG method is limited to the classical $\ell_1$ sparsity regularization.
In this paper, we present two accelerated alternatives to the ST-($\alpha\ell_1-\beta\ell_2$) algorithm by extending the PG method to the non-convex $\alpha\ell_1-\beta\ell_2$
sparsity regularization. Moreover, we discuss a strategy to determine the radius $R$ of the $\ell_1$-ball constraint by Morozov's discrepancy principle. Numerical results are reported to illustrate the efficiency of the proposed approach.
\end{quote}

\smallskip
\noindent
{\bf Keywords.}  projected gradient method, $\alpha\ell_{1}-\beta\ell_{2}$ sparsity regularization, non-convex sparsity regularization, Morozov's discrepancy principle

%ssssssssssssssssssssssssssssssssssssssssssssssssssssssssssssssssssssssssssssssssssssssssssssssssss1
\section{Introduction}
In this paper, we are interested in solving an ill-posed operator equation of the form
\begin{equation}\label{equ1_1}
Ax=y,
\end{equation}
where $x$ is sparse, $A:\ell_2\rightarrow Y$ is a linear and bounded operator mapping between the $\ell_2$ space and a
 Banach space $Y$ with norms $\|\cdot\|_{\ell_2}$ and $\|\cdot\|_Y$, respectively. In practice, the right-hand side $y$ is known
only approximately with an error up to a level $\delta\geq 0$. Therefore, we assume that we know $\delta\geq 0$ and $y^{\delta}\in Y$ with $\|y^{\delta}-y\|_{Y}\leq \delta$.
The most commonly adopted technique to solve problem \eqref{equ1_1} is the $\ell_p$-norm sparsity regularization with $1\leq p<2$, see the monographs \cite{F2010,SGGHL2009} and the special
issues \cite{BB18,DDD16,JM12,JMS17} for many developments on regularizing properties and minimization schemes.
Since the $\ell_p$-norm regularization with $1\leq p<2$ does not always provide the ‘sparsest’ solution, the non-convex $\ell_p$-norm sparsity regularization with $0\leq p<1$ was proposed as alternatives. For the $\ell_0$ sparsity regularization, see \cite{BD08,BD09,BL08,FR08} for the iterative hard thresholding algorithm. We refer the reader to \cite{HSY15,LPZ19,MLP13} for some other types of alternatives to the $\ell_0$-norm.

The investigation of the non-convex
$\alpha\|\cdot\|_{\ell_1}-\beta\| \cdot\|_{\ell_2}$ $(\alpha\ge\beta\geq0)$ regularization has attracted attention in the field of sparse recovery over
the last five years, see \cite{DH19,LCGK20,LY18,YSX17,YLHX15} and references therein.
In \cite{DH19}, we investigated the well-posedness and convergence rate of the non-convex
$\alpha\|\cdot\|_{\ell_1}-\beta\| \cdot\|_{\ell_2}$ $(\alpha\ge\beta\geq0)$ sparsity regularization of the form
\begin{equation}\label{equ1_2}
\min\mathcal{J}_{\alpha,\beta}^{\delta}(x)=\frac{1}{q}\| Ax-y^{\delta}\|_Y^{q}+\mathcal{R}_{\alpha,\beta}(x)
\end{equation}
 in the $\ell_2$ space, where
\[ \mathcal{R}_{\alpha,\beta}(x):=\alpha\|x\|_{\ell_1}-\beta\| x\|_{\ell_2}, \quad \alpha\ge\beta\geq 0, \ q\geq 1. \]
Denoting $\eta=\beta/\alpha$, we can equivalently express the function $\mathcal{J}_{\alpha,\beta}^{\delta}(x)$ in (\ref{equ1_2}) as
\[ \frac{1}{q}\| Ax-y^{\delta}\|_Y^{q}+\alpha\mathcal{R}_{\eta}(x), \]
where
\[ \mathcal{R}_{\eta}(x):=\|x\|_{\ell_1}-\eta\| x\|_{\ell_2},\quad \alpha>0,\ 1\ge\eta\geq0. \]
For the particular case $q=2$, we provided an ST-($\alpha\ell_{1}-\beta\ell_{2}$) algorithm of the form
\begin{equation}\label{equ1_3}
z^{k}=\mathbb{S}_{\frac{\alpha}{\lambda}}\left(\left(\frac{\beta}{ \lambda\|x^k\|_{\ell_2}}+1\right)x^k-\frac{1}{\lambda}A^{*}(Ax^k-y^{\delta})\right), \quad
 x^{k+1}=x^{k}+s^k(z^k-x^k)
\end{equation}
for \eqref{equ1_2}, where $s^k$ is the step size and $\lambda>0$. Obviously, the ST-($\alpha\ell_{1}-\beta\ell_{2}$) algorithm is
similar to the classical ISTA when the step size $s^k=1$.  In \cite{DDD04}, an ISTA of the form
\begin{equation}\label{equ1_4}
x^{k+1}=\mathbb{S}_{\alpha}\left(x^k-A^{*}(Ax^k-y^{\delta})\right)
\end{equation}
was first proposed to solve the classical $\ell_1$ sparsity regularization of the form
\begin{equation}\label{equ1_5}
\min\mathcal{J}_{\alpha}^{\delta}(x)=\frac{1}{2}\| Ax-y^{\delta}\|_Y^{2}+\alpha\|x\|_{\ell_1}.
\end{equation}

As an alternative of the $\ell_p$-norm with $0\leq p<1$, the function $ \alpha\|\cdot\|_{\ell_1}-\beta\| \cdot\|_{\ell_2}$
$(\alpha\ge\beta\geq0)$ has the desired property that it is a good approximation of a multiple of the
$\ell_0$-norm. The function has a simpler structure than the $\ell_0$-norm from the perspective of computation.
The ST-($\alpha\ell_{1}-\beta\ell_{2}$) algorithm can easily be implemented, see \cite{DH19,GWC18,YLHX15} for
several other algorithms for $ \|\cdot\|_{\ell_1}-\| \cdot\|_{\ell_2}$ sparsity regularization.
However, the ST-($\alpha\ell_{1}-\beta\ell_{2}$) algorithm, in general, can be arbitrarily slow and it is computationally intensive.
So it is desirable to develop accelerated versions of the ST-($\alpha\ell_{1}-\beta\ell_{2}$) algorithm, especially for large-scale ill-posed inverse problems.

\subsection{Some accelerated algorithms for ISTA}

Searching for accelerated algorithms of the ISTA has become popular and some faster algorithms have been proposed.
In \cite{BF08,DFL08,FNW07,WNF09}, several accelerated
projected gradient methods have been provided. A comparison among several accelerated algorithms is provided in \cite{LBDZZ09}, including ``fast ISTA'' (\cite{BT09}). Applying a smoothing technique from Nesterov (\cite{N05}), a fast and accurate first-order method is proposed for solving large-scale compressed sensing problems (\cite{BBC11}). In \cite{DC15}, a simple heuristic adaptive restart technique is introduced, which can dramatically improve the convergence rate of accelerated gradient schemes. In \cite{CD15}, convergence of the iterates of the “Fast Iterative Shrinkage/Thresholding Algorithm” is established. In \cite{OBGXY05}, a new iterative regularization procedure for inverse problems based on the use of Bregman distances is studied. Numerical results show that the proposed method gives significant improvement over the standard method. An explicit algorithm based on a primal-dual approach for the minimization of an $\ell_1$-penalized least-squares function, with a non-separable $\ell_1$ term, is proposed in \cite{LV11}. An iteratively reweighted least squares algorithm and the corresponding convergence analysis for the regularization of linear inverse problems with sparsity constraints are investigated in \cite{FPRW16}. For a projected gradient method of nonlinear ill-posed problems, see \cite{TB10}.

Unfortunately, the algorithms stated above are only limited to the classical $\ell_1$-norm sparsity regularization.
Though there is great potential for accelerated algorithms in sparsity regularization with a non-convex penalty term,
to the best of our knowledge, little work can be found in the literature.  In \cite{RZ12},
the authors treat the problem of minimizing a general continuously differentiable
function subject to $\|x\|_0\leq s$, where $s>0$ is an integer, and $\|x\|_0$ is the $\ell_0$-norm of $x$, which counts the number of nonzero components in $x$. In this paper,
we extend the projected gradient method to the non-convex $\alpha\ell_1-\beta\ell_2$ sparsity regularization. There are two reasons why we choose PG method. First, its formulation is simple and it can easily be implemented. Another reason is that it converges quite fast. So it is adequate for solving large-scale ill-posed problems.

The PG method was introduced in \cite{DFL08} to accelerate the ISTA.  It is shown that the ISTA
converges initially relatively fast, then it overshoots the $\ell_1$-norm penalty, and it takes many steps to
re-correct back. It means that the algorithm generates a path $\{x_n\mid n\in \mathbb{N}\}$
that is initially fully contained in the $\ell_1$-ball $B_R:=\{x\in\ell_2 \mid\|x\|_{\ell_1}\leq R\}$. Then it gets out of the ball to slowly inch back to it in the limit.
To avoid this long “external” detour, the authors of \cite{DFL08} proposed an accelerated algorithm by substituting the soft thresholding operation $\mathbb{S}_{\alpha}$ by the projection $\mathbb{P}_{R}$ which is defined in Definition \ref{def2_4}. This leads to a projected gradient method of the form
\begin{equation}\label{equ1_6}
x^{k+1}=\mathbb{P}_{R}\left(x^k-\gamma^kA^{*}(Ax^k-y^{\delta})\right).
\end{equation}

\subsection{Contribution and organization}\label{ssec1_2}

Since the ST-($\alpha\ell_1-\beta\ell_2$) algorithm \eqref{equ1_3} is similar to ISTA \eqref{equ1_4}, inspired by \cite{DFL08}, we propose two accelerated alternatives to \eqref{equ1_3} by extending the PG method to solve \eqref{equ1_2}.

The first accelerated algorithm is based on the generalized conditional gradient method (GCGM). In \cite{DH19}, baed on GCGM, we proposed
the ST-($\alpha\ell_1-\beta\ell_2$) algorithm where the crucial issue is to determine $z^k$ by the optimization problem of the form
\begin{equation}\label{equ1_10}
 \begin{array}{llc}
\displaystyle
\min\limits_{z} \langle A^{*}(Ax^k-y^{\delta})-\lambda x^k-\frac{\beta x^k}{\|x^k\|_{\ell_2}},z\rangle+\frac{\lambda}{2}\|z\|_{\ell_2}^2+\alpha\|z\|_{\ell_1}.
 \end{array}
 \end{equation}
In this paper, we show that the problem \eqref{equ1_10} can be solved by a PG method of the form
\begin{equation}\label{equ1_11}
z^{k}=\mathbb{P}_{R}\left(x^k+\frac{\beta x^{k}}{\lambda\|x^{k}\|_{\ell_2}}-\frac{1}{\lambda}A^{*}(Ax^k-y^{\delta})\right).
\end{equation}
With $z^k$ at our disposal, we compute $x^{k+1}$ by $x^{k+1}=x^{k}+s^k(z^k-x^k)$, where $s^k$ is the step size.

Theoretically, the radius $R$ of $\ell_1$-ball should be chosen by $R=\|x_{\alpha,\beta}^{\delta}\|_{\ell_1}$ (\cite{DFL08}), where $x_{\alpha,\beta}^{\delta}$ is a minimizer of \eqref{equ1_2}.
However, in general, one can not obtain the value of $\|x_{\alpha,\beta}^{\delta}\|_{\ell_1}$ before starting the iteration \eqref{equ1_11}. In this paper, we utilize Morozov's discrepancy principle to determine $R$. This method only requires knowledge of the noise level $\delta$ and the observed data $y^{\delta}$. Moreover, we investigate the well-posedness of \eqref{equ1_2} under Morozov's discrepancy principle.

The second accelerated algorithm is based on the surrogate function approach. We investigate this algorithm in the finite dimensional space $\mathbb{R}^n$. For the case $q=2$, \eqref{equ1_2} takes the form
\begin{equation}\label{equ1_2add}
\min\mathcal{J}_{\alpha,\beta}^{\delta}(x)=\frac{1}{2}\| Ax-y^{\delta}\|_{\ell_2}^{2}+\alpha\|x\|_{\ell_1}-\beta\| x\|_{\ell_2},
\end{equation}
where $A:\mathbb{R}^n\rightarrow \mathbb{R}^m$ is a linear and bounded operator mapping between the $\mathbb{R}^n$ and $\mathbb{R}^m$ space
  with $\|\cdot\|_{\ell_2}$ norms.
In the following, we remove the $\ell_1$ constraint in \eqref{equ1_2add} and to consider a constrained optimization problem for a certain radius $R$ of $\ell_1$-ball constraint.
So, in analogy to the techniques about projection in \cite{DFL08,T96},
a natural strategy is to consider the constrained optimization problem of the form
\begin{equation}\label{equ1_7}
\min\frac{1}{2}\| Ax-y^{\delta}\|_{\ell_{2}}^{2}\quad
{\rm subject~to}~x\in B'_R:=\{x\in\mathbb{R}^n \mid\|x\|_{\ell_1}-\eta\|x\|_{\ell_2}\leq R\},\quad 1\ge\eta\geq 0.
\end{equation}
However, since $B'_R$ is non-convex, it is challenge to analyze and solve this constrained optimization problem.
To utilize the theory of convex constraints, we remove the $\ell_1$ constraint in \eqref{equ1_2add} and to consider instead the following optimization problem of the form
\begin{equation}\label{equ1_8}
\min\mathcal{D}_{\beta}^{\delta}(x)=\frac{1}{2}\| Ax-y^{\delta}\|_{\ell_2}^{2}-\beta\| x\|_{\ell_2}\quad
{\rm subject~to}~ x\in B_R:=\{x\in\mathbb{R}^n \mid\|x\|_{\ell_1}\leq R\}
\end{equation}
for a suitable $R$. We propose a projected gradient method of the form
\begin{equation}\label{equ1_9}
x^{k+1}=\mathbb{P}_{R}\left(x^k+\frac{\beta x^{k+1}}{\lambda\|x^{k+1}\|_{\ell_2}}-\frac{1}{\lambda}A^{*}(Ax^k-y^{\delta})\right)
\end{equation}
for \eqref{equ1_8}, where $\lambda>0$ satisfies some conditions, see Assumption \ref{assumption3_8}.

An outline of the rest of this paper is as follows. In the next section we introduce the notation and review results of the Tikhonov regularization and the PG method. In Section \ref{sec3}, we investigate an accelerated algorithm via GCGM. Furthermore, we give a strategy to determine the radius $R$ of $\ell_1$-ball constraint. In Section \ref{sec5}, we propose another accelerated algorithm via the surrogate function approach. Finally, we present results from numerical experiments on compressive sensing and image deblurring problems in Section \ref{sec6}.
%sssssssssssssssssssssssssssssssssssssssssssssssssssssssssssssssssssssssssssssssssssssssssssssssssssssssssssssssssssssssssssssssssssssssssssssss1

%sssssssssssssssssssssssssssssssssssssssssssssssssssssssssssssssssssssssssssssssssssssssssssssssssssssssssssssssssssssssssssssssssssssssssssssss2

\section{Preliminaries}\label{sec2}

Before starting the discussion on the accelerated algorithms, we briefly introduce some notation and results of the Tikhonov regularization and the PG method.  Let
\begin{equation}\label{equ2_1}
x^{\delta}_{\alpha,\beta}= \arg\min\limits_x\{\frac{1}{2}\|Ax-y^{\delta}\|_Y^{2}
+\mathcal{R}_{\alpha,\beta}(x)\}
\end{equation}
be a minimizer of the regularization function $\displaystyle \mathcal{J}_{\alpha,\beta}^{\delta}(x)$
in (\ref{equ1_2}) with $q=2$ for every $\alpha\ge\beta\geq0 $. We denote by $\mathcal{L}^{\delta}_{\alpha,\beta}$ the set
of all minimizers $x^{\delta}_{\alpha,\beta}$, and by $x^{\delta}_{R,\beta}$ a solution of \eqref{equ1_8}.
We use the following definition of $\mathcal{R}_{\eta}$-minimum solution (\cite{DH19}).

\begin{definition}\label{def2_1}
An element $x^{\dagger}\in \ell_2$ is called an $\mathcal R_{\eta}$-minimum solution of the
linear problem $Ax=y$ if
\[\displaystyle Ax^{\dagger}=y~~and~~\displaystyle \mathcal R_{\eta}(x^{\dagger})
=\min\limits_x\{\mathcal R_{\eta}(x)\mid Ax=y\}.\]
\end{definition}

We recall the definition of sparsity (\cite{DDD04}).

\begin{definition}\label{Defadd}
An element $x\in \ell_2$ is called sparse if $\mathrm{supp}(x):=\{i\in\mathbb{N}\mid x_{i}\neq0\}$ is finite,
where $x_i$ is the $i^{\rm th}$ component of $x$. $\|x\|_{0}:=\mathrm{supp}(x)$ is the cardinality
of $\mathrm{supp}(x)$. If $\|x\|_{0}=s$ for some $s\in \mathbb{N}$, then $x\in \ell_2$ is called $s$-sparse.
\end{definition}

\begin{definition}\label{def2_2}{\rm  (Morozov's discrepancy principle)}
For $1<\tau_1\leq\tau_2$, we choose $\alpha=\alpha(\delta, y^{\delta})>0$ such that
\begin{equation}\label{equ2_2}
\tau_1\delta\leq \|A x_{\alpha,\beta}^{\delta}-y^{\delta}\|_Y\leq \tau_2 \delta
\end{equation}
holds for some $x_{\alpha,\beta}^{\delta}$.
\end{definition}

Next we recall definitions of the soft thresholding and the projection operators (\cite{BF08,DDD04}).

\begin{definition}\label{def2_3}
For a given $\alpha>0$, the soft thresholding operator is defined as
\[\mathbb{S}_{\alpha}(x)=\sum\limits_{i}S_{\alpha}(x_i)e_i,\]
where $e_i=(\underbrace{0,\cdots,0,1}_i,0,\cdots)$, $x_i$ is the $i^{\rm th}$ component of $x$ and
\begin{align*}
\displaystyle S_{\alpha}(t)= \left\{\begin{array}{ll}
\displaystyle t-\alpha~~~~{\rm if}~~~t\geq\alpha, \\[2mm]
\displaystyle 0~~~~~~~~~{\rm if}~~~|t|<\alpha, \\[2mm]
\displaystyle t+\alpha~~~~{\rm if}~~~t \leq-\alpha.
\end{array}
\right. 	
\end{align*}
\end{definition}

\begin{definition}\label{def2_4}
The projection onto the $\ell_1$-ball is defined by
\[\mathbb{P}_R(\hat{x}):= \{\arg\min\limits_x \|x-\hat{x}\|_{\ell_2}\ {\rm subject\ to}\ \|x\|_{\ell_1}\leq R\},\]
which gives the projection of an element $\hat{x}$ onto the $\ell_1$-norm ball with radius $R>0$.
\end{definition}

Then we review two results from \cite{DFL08} on relations between the soft thresholding operator and the projection operator. For relations between the parameters $\alpha$ and $R$, see \cite[Fig.\ 2]{DFL08}.
\begin{lemma}\label{lemma2_5}
For some countable index set $\Lambda$, denote $\ell_p=\ell_p(\Lambda)$, $1\leq p<\infty$. For any fixed $a\in \ell_2(\Lambda)$ and for $\alpha>0$, $\|\mathbb{S}_{\alpha}(a)\|_{\ell_1}$ is a piecewise linear,
continuous, decreasing function of $\alpha$.  Moreover, if $a\in \ell_1(\Lambda)$ then $\|\mathbb{S}_{0}(a)\|_{\ell_1}=\|a\|_{\ell_1}$ and
$\|\mathbb{S}_{0}(a)\|_{\ell_1}=0$ for $\alpha\geq \max_i |a_i|$.
\end{lemma}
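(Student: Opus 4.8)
The plan is to reduce the statement to an elementary fact about the coordinates: by Definition \ref{def2_3}, the $i$-th component of $\mathbb{S}_\alpha(a)$ has absolute value $(|a_i|-\alpha)_+:=\max\{|a_i|-\alpha,\,0\}$ (check the three cases $a_i\ge\alpha$, $|a_i|<\alpha$, $a_i\le-\alpha$ in the definition of $S_\alpha$), so that
\[
\|\mathbb{S}_\alpha(a)\|_{\ell_1}=\sum_{i\in\Lambda}(|a_i|-\alpha)_+=:\phi(\alpha),
\]
and it then suffices to analyze the scalar function $\phi$ on $(0,\infty)$. First I would verify that $\phi(\alpha)$ is finite for every $\alpha>0$: since $a\in\ell_2(\Lambda)$ we have $|a_i|\to 0$, hence for fixed $\alpha>0$ the set $\Lambda_\alpha:=\{i\in\Lambda:|a_i|>\alpha\}$ is finite and $\phi(\alpha)=\sum_{i\in\Lambda_\alpha}(|a_i|-\alpha)$ is a finite sum.

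Monotonicity and continuity then follow by a localization argument. Each map $\alpha\mapsto(|a_i|-\alpha)_+$ is non-increasing and continuous, so $\phi$ is non-increasing (decreasing in the sense intended) on $(0,\infty)$. For continuity, fix $\alpha_0>0$; on $[\alpha_0,\infty)$ only the finitely many indices in $\Lambda_{\alpha_0}$ contribute, so there $\phi$ equals the finite sum $\sum_{i\in\Lambda_{\alpha_0}}(|a_i|-\alpha)_+$, which is continuous; since $\alpha_0>0$ is arbitrary, $\phi$ is continuous on $(0,\infty)$. For piecewise linearity I would sort the moduli $\{|a_i|\}_{i\in\Lambda}$ into a non-increasing sequence $\mu_1\ge\mu_2\ge\cdots$ (counted with multiplicity, $\mu_k\to 0$). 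On each interval $[\mu_{k+1},\mu_k]$ exactly the $k$ largest moduli exceed $\alpha$, so $\phi(\alpha)=\big(\sum_{j=1}^k\mu_j\big)-k\alpha$ is affine with slope $-k$; the breakpoints are the distinct values among the $\mu_k$, and only finitely many of them lie in any $[\alpha_0,\infty)$, so $\phi$ is piecewise linear with finitely many pieces on each compact subinterval of $(0,\infty)$.

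For the ``moreover'' part: $S_0(t)=t$ gives $\mathbb{S}_0(a)=a$, hence $\|\mathbb{S}_0(a)\|_{\ell_1}=\|a\|_{\ell_1}$ whenever $a\in\ell_1(\Lambda)$; and since $|a_i|\to 0$, the supremum $\max_i|a_i|$ is attained, so for $\alpha\ge\max_i|a_i|$ every component is thresholded to $0$ and $\|\mathbb{S}_\alpha(a)\|_{\ell_1}=\phi(\alpha)=0$.

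I expect the only genuine subtlety — the main obstacle — to be the bookkeeping for the infinite index set: one must argue finiteness of $\phi$, continuity, and the piecewise-linear structure uniformly on sets bounded away from $\alpha=0$, since infinitely many breakpoints may accumulate at $0$ and $\phi(\alpha)$ may even diverge as $\alpha\to 0^+$ when $a\notin\ell_1(\Lambda)$; everything else is routine.
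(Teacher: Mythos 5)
Your argument is correct and is essentially the standard proof of this fact; the paper itself offers no proof, simply quoting the lemma from the reference [DFL08], where the same reduction to $\|\mathbb{S}_\alpha(a)\|_{\ell_1}=\sum_i(|a_i|-\alpha)_+$ and the same finite-truncation/sorting argument appear. You also correctly read the last clause of the statement as containing a typo ($\|\mathbb{S}_{0}(a)\|_{\ell_1}=0$ should be $\|\mathbb{S}_{\alpha}(a)\|_{\ell_1}=0$ for $\alpha\geq\max_i|a_i|$), and your care about the accumulation of breakpoints at $\alpha=0$ when $a\notin\ell_1(\Lambda)$ is exactly the one point worth being careful about.
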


\begin{lemma}\label{lemma2_6}
If $\|a\|_{\ell_1}> R$, then the $\ell_2$ projection of $a$ on the $\ell_1$-ball with radius $R$ is given by $\mathbb{P}_{R}(a)=\mathbb{S}_{\alpha}(a)$, where $\alpha$
(depending on $a$ and $R$) is chosen such that $\|\mathbb{S}_{\alpha}(a)\|_{\ell_1}=R$. If $\|a\|_{\ell_1}\leq R$ then $\mathbb{P}_{R}(a)=\mathbb{S}_{0}(a)=a$.
\end{lemma}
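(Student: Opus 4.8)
The plan is to reduce the projection onto the $\ell_1$-ball to an application of the soft thresholding operator by examining the Karush--Kuhn--Tucker conditions of the constrained minimization problem in Definition \ref{def2_4}. First I would dispose of the easy case: if $\|a\|_{\ell_1}\le R$, then $a$ itself is feasible for $\min_x\|x-a\|_{\ell_2}$ subject to $\|x\|_{\ell_1}\le R$ and attains the value $0$, so by uniqueness of the minimizer of a strictly convex objective we get $\mathbb{P}_R(a)=a=\mathbb{S}_0(a)$. The substance of the lemma is the case $\|a\|_{\ell_1}>R$, where the constraint must be active, i.e.\ $\|\mathbb{P}_R(a)\|_{\ell_1}=R$. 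I would justify this by a standard argument: if the projection $x^\ast$ satisfied $\|x^\ast\|_{\ell_1}<R$, then moving slightly from $x^\ast$ toward $a$ along the segment $[x^\ast,a]$ stays feasible for small steps (by continuity of the $\ell_1$-norm) and strictly decreases $\|x-a\|_{\ell_2}$, contradicting optimality.

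Next I would set up the Lagrangian $\tfrac12\|x-a\|_{\ell_2}^2+\mu(\|x\|_{\ell_1}-R)$ with multiplier $\mu\ge 0$; since the constraint is active, $\mu>0$ (else $x^\ast=a$, contradicting $\|a\|_{\ell_1}>R\ge\|x^\ast\|_{\ell_1}$). The optimality condition reads $0\in x^\ast-a+\mu\,\partial\|x^\ast\|_{\ell_1}$, which decouples componentwise into $a_i\in x^\ast_i+\mu\,\partial|x^\ast_i|$. Solving this scalar inclusion gives exactly $x^\ast_i=S_\mu(a_i)$: for $|a_i|>\mu$ one gets $x^\ast_i=a_i-\mu\,\mathrm{sgn}(a_i)$, and for $|a_i|\le\mu$ the inclusion forces $x^\ast_i=0$. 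Hence $x^\ast=\mathbb{S}_\mu(a)$, and setting $\alpha:=\mu$ we have $\|\mathbb{S}_\alpha(a)\|_{\ell_1}=\|x^\ast\|_{\ell_1}=R$.

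It remains to argue that such an $\alpha$ exists and is consistent, and this is where I would lean on Lemma \ref{lemma2_5}: the map $\alpha\mapsto\|\mathbb{S}_\alpha(a)\|_{\ell_1}$ is continuous and decreasing, equals $\|a\|_{\ell_1}>R$ at $\alpha=0$ (after noting $a\in\ell_1(\Lambda)$ here, since $\|a\|_{\ell_1}>R$ forces $a\in\ell_1$) and vanishes for $\alpha\ge\max_i|a_i|$, so by the intermediate value theorem there is an $\alpha>0$ with $\|\mathbb{S}_\alpha(a)\|_{\ell_1}=R$; conversely, for that $\alpha$ the point $\mathbb{S}_\alpha(a)$ satisfies the KKT conditions above with multiplier $\alpha$, hence (by convexity and sufficiency of KKT for this convex program) is the unique projection. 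The main obstacle is the careful case analysis of the scalar subdifferential inclusion $a_i\in x_i+\mu\,\partial|x_i|$ near $x_i=0$ — handling the non-differentiability at the origin and verifying that the two regimes $|a_i|>\mu$ and $|a_i|\le\mu$ glue together to give precisely $S_\mu(a_i)$ — together with making the KKT argument rigorous in the infinite-dimensional $\ell_2(\Lambda)$ setting, where one should confirm the constraint qualification (e.g.\ Slater's condition, which holds since $0$ is strictly feasible when $R>0$) so that the multiplier rule applies.
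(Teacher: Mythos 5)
The paper does not prove this lemma: it is quoted verbatim from \cite{DFL08} (Lemma~\ref{lemma2_5} and Lemma~\ref{lemma2_6} are explicitly introduced as ``two results from \cite{DFL08}''), so there is no in-paper proof to compare against. Judged on its own, your KKT plan is sound and reaches the right conclusion: the trivial case, the activity of the constraint when $\|a\|_{\ell_1}>R$, the componentwise resolution of $a_i\in x_i+\mu\,\partial|x_i|$ into $x_i=S_\mu(a_i)$, and the existence of the correct threshold via Lemma~\ref{lemma2_5} and the intermediate value theorem are all correct steps. One genuine slip: your parenthetical claim that ``$\|a\|_{\ell_1}>R$ forces $a\in\ell_1$'' is false --- an element of $\ell_2(\Lambda)$ may have $\|a\|_{\ell_1}=+\infty$, and that case is not excluded by the hypothesis. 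The slip is harmless, because for $a\in\ell_2$ and any $\alpha>0$ only finitely many components exceed $\alpha$, so $\|\mathbb{S}_\alpha(a)\|_{\ell_1}$ is finite for $\alpha>0$, tends to $\|a\|_{\ell_1}\in(R,\infty]$ as $\alpha\to0^+$ and to $0$ for large $\alpha$; the IVT argument goes through without assuming $a\in\ell_1$. You should simply delete that justification rather than rely on it.

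It is also worth knowing that the original argument in \cite{DFL08} avoids Lagrange multipliers and infinite-dimensional constraint qualifications entirely: since $\mathbb{S}_\alpha(a)$ is the unconstrained minimizer of $x\mapsto\|x-a\|_{\ell_2}^2+2\alpha\|x\|_{\ell_1}$, for any $w$ with $\|w\|_{\ell_1}\le R=\|\mathbb{S}_\alpha(a)\|_{\ell_1}$ one gets
\begin{equation*}
\|w-a\|_{\ell_2}^2\;\ge\;\|\mathbb{S}_\alpha(a)-a\|_{\ell_2}^2+2\alpha\bigl(\|\mathbb{S}_\alpha(a)\|_{\ell_1}-\|w\|_{\ell_1}\bigr)\;\ge\;\|\mathbb{S}_\alpha(a)-a\|_{\ell_2}^2,
\end{equation*}
which identifies $\mathbb{S}_\alpha(a)$ as the projection directly. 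This buys you a two-line verification in place of the subdifferential case analysis and the Slater-condition discussion you flag as the main obstacle; your route is more general (it explains \emph{why} the solution must be a soft thresholding rather than verifying a guessed answer), but for this lemma the penalized-functional comparison is the cleaner path.
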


Finally, recall the following properties of $\mathbb{P}_R(x)$ (\cite{DFL08}).

\begin{lemma}\label{lemma2_7}
Let $H$ be a Hilbert space with the inner product $\langle\cdot,\cdot\rangle$ and norm $\|\cdot\|_H$.  For any $x\in H$, $\mathbb{P}_R(x)$ is characterized as the unique vector in $H$ such that
\[\langle w-\mathbb{P}_R(x), x-\mathbb{P}_R(x)\rangle\leq 0 \quad \forall\, w\in {B}_R.\]
Moreover, the projection $\mathbb{P}_R$ is non-expansive:
\[\|\mathbb{P}_R(x')-\mathbb{P}_R(x'')\|_H\leq \|x'-x''\|_H\quad \forall\, x',x''\in H.\]
\end{lemma}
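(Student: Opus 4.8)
The plan is to treat Lemma~\ref{lemma2_7} as the standard characterization of the metric projection onto a nonempty, closed, convex subset of a Hilbert space, specialized to the $\ell_1$-ball $B_R$, followed by the classical argument for non-expansiveness.

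First I would check that $B_R$ is a well-behaved constraint set, so that $\mathbb{P}_R$ in Definition~\ref{def2_4} is genuinely single-valued. Convexity of $B_R$ is immediate, since it is a sublevel set of the convex functional $\|\cdot\|_{\ell_1}$. For closedness in the $\ell_2$-topology, observe that $\|x\|_{\ell_2}\le\|x\|_{\ell_1}$, so $B_R$ is bounded, and that $\|\cdot\|_{\ell_1}$ is lower semicontinuous with respect to $\ell_2$-convergence (componentwise convergence together with Fatou's lemma); hence $B_R$ is closed. The classical projection theorem for closed convex sets in a Hilbert space then yields existence and uniqueness of the minimizer defining $\mathbb{P}_R(x)$.

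Next I would establish the variational inequality, which also delivers uniqueness of the characterizing vector. For the forward implication, fix $w\in B_R$, set $p=\mathbb{P}_R(x)$, and use convexity: $p+t(w-p)\in B_R$ for $t\in[0,1]$. From $\|x-p-t(w-p)\|_H^2\ge\|x-p\|_H^2$ one gets $2t\langle w-p,\,x-p\rangle\le t^2\|w-p\|_H^2$; dividing by $t>0$ and letting $t\downarrow 0$ gives $\langle w-p,\,x-p\rangle\le 0$. For the converse, if $p\in B_R$ satisfies this inequality, then for every $w\in B_R$,
\[
\|x-w\|_H^2=\|x-p\|_H^2-2\langle w-p,\,x-p\rangle+\|w-p\|_H^2\ge\|x-p\|_H^2,
\]
so $p$ is a minimizer; and if $p_1,p_2$ both satisfy the inequality, testing the first with $w=p_2$ and the second with $w=p_1$ and adding yields $\|p_1-p_2\|_H^2\le 0$, hence $p_1=p_2$.

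Finally, for non-expansiveness, write $p'=\mathbb{P}_R(x')$ and $p''=\mathbb{P}_R(x'')$. Apply the variational inequality at $x'$ with $w=p''$ and at $x''$ with $w=p'$; adding the two resulting inequalities gives $\langle p''-p',\,(x'-x'')+(p''-p')\rangle\le 0$, i.e. $\|p'-p''\|_H^2\le\langle p'-p'',\,x'-x''\rangle$, and Cauchy--Schwarz then yields $\|p'-p''\|_H\le\|x'-x''\|_H$. The only point needing genuine care is the closedness of $B_R$ in the $\ell_2$-topology, i.e. the strong lower semicontinuity of the $\ell_1$-norm; everything else is the routine Hilbert-space projection argument, so I do not anticipate a substantive obstacle.
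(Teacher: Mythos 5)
Your proposal is correct and complete: it is the standard Hilbert-space projection argument (existence/uniqueness via the projection theorem, the variational inequality by the first-order expansion along the segment $p+t(w-p)$, and non-expansiveness by testing the two inequalities against each other), and you rightly identify the only point specific to this setting, namely the closedness of the $\ell_1$-ball $B_R$ in the $\ell_2$-topology via lower semicontinuity of $\|\cdot\|_{\ell_1}$. The paper itself gives no proof of Lemma~\ref{lemma2_7} --- it simply recalls the result from \cite{DFL08} --- so there is nothing to contrast with; your argument is exactly the one that reference (and any textbook treatment) would supply.
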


%sssssssssssssssssssssssssssssssssssssssssssssssssssssssssssssssssssssssssssssssssssssssssssssssssssssssssssssssssssssssssssssssssssssssssssssss2

%sssssssssssssssssssssssssssssssssssssssssssssssssssssssssssssssssssssssssssssssssssssssssssssssssssssssssssssssssssssssssssssssssssssssssssssss3

\section{The projected gradient method via GCGM}\label{sec3}

In \cite{DH19}, we proposed an ST-($\alpha\ell_{1}-\beta\ell_{2}$) algorithm for \eqref{equ1_2} based on GCGM. We rewrite $\mathcal{J}_{\alpha,\beta}^{\delta}(x)$ in (\ref{equ1_2}) as
\[  \mathcal{J}_{\alpha,\beta}^{\delta}(x)=F(x)+\Phi(x),  \]
where
\begin{align*}
F(x) & =\frac{1}{2}\| Ax-y^{\delta}\|_Y^2-\Theta(x), \\
\Phi(x) & =\Theta(x)+\alpha\|x\|_{\ell_1}-\beta\| x\|_{\ell_2}, \\
\Theta(x) & =\frac{\lambda}{2}\|x\|_{\ell_2}^2+\beta\|x\|_{\ell_2}, \quad \lambda>0.
\end{align*}
The ST-($\alpha\ell_{1}-\beta\ell_{2}$) algorithm is stated in the form of Algorithm \ref{alg1}. Convergence of Algorithm \ref{alg1} is given in Theorem \ref{theorem5_1}; see \cite[Theorem 3.5]{DH19} for its proof.

%aaaaaaaaaaaaaaaaaaaaaaaaaaaaaaaaaaaaaaaaaaaaaaaaaaaaaaaaaaaaaaaaaaaaaaaaaaaaaaaaaaaaaaaaaaaaaaaaaaaaaaaaaaaaaaaaaaaaaaaaaaaaaaaaaa
\begin{algorithm} %\label{algorithm2}
\caption{ST-$({\alpha \ell_1-\beta \ell_2})$ algorithm for problem (\ref{equ1_2}) with $q=2$}
\label{alg1}
\begin{algorithmic}
\STATE{Set $k=0$, $x^0\in \ell_2$ such that $\Phi(x^0)<+\infty$,}
\STATE{for $k$ = 0, 1, 2, $\cdots$, do}
\STATE{~~~~if $x^k=0$ then}
\STATE{~~~~~~$\displaystyle x^{k+1}=\arg\min\frac{1}{2}\|Ax-y^{\delta}\|_Y^{2}+\alpha\|x\|_{\ell_1}$}
\STATE{~~~~else}
\STATE{~~~~~~determine a descent direction $z^k$ as a solution of
\[\min\limits_{z} \langle A^{*}(Ax^k-y^{\delta})-\lambda x^k-\frac{\beta x^k}{\|x^k\|_{\ell_2}},z\rangle+\frac{\lambda}{2}\|z\|_{\ell_2}^2+\alpha\|z\|_{\ell_1}\]}
\STATE{~~~~~~determine a step size $s^k$ as a solution of \[\min\limits_{s \in [0,1]} F(x^k+s(z^k-x^k))+\Phi(x^k+s(z^k-x^k))\]}
\STATE{~~~~~~$x^{k+1}=x^k+s^k(z^k-x^k)$}
\STATE{~~~~end if}
\STATE{~~~~$k=k+1$
\STATE{end for}}
\end{algorithmic}
\end{algorithm}
%aaaaaaaaaaaaaaaaaaaaaaaaaaaaaaaaaaaaaaaaaaaaaaaaaaaaaaaaaaaaaaaaaaaaaaaaaaaaaaaaaaaaaaaaaaaaaaaaaaaaaaaaaaaaaaaaaaaaaaaaaaaaaaaaaa

\begin{theorem}\label{theorem5_1}
Let $\{x^k\}$ denote the sequence generated by Algorithm \ref{alg1}.
Then $\{x^k\}$ contains a convergent subsequence and every convergent subsequence
of $\{x^k\}$ converges to a stationary point of the function $\mathcal{J}_{\alpha,\beta}^{\delta}(x)$ in \eqref{equ1_2} with $q=2$.
\end{theorem}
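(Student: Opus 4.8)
\quad The idea is to adapt the convergence theory of the generalized conditional gradient method to the splitting $\mathcal{J}_{\alpha,\beta}^{\delta}=F+\Phi$ underlying Algorithm~\ref{alg1}. First I would record the structure of the two pieces: $\Phi(x)=\Theta(x)+\alpha\|x\|_{\ell_1}-\beta\|x\|_{\ell_2}=\tfrac{\lambda}{2}\|x\|_{\ell_2}^{2}+\alpha\|x\|_{\ell_1}$ is proper, lower semicontinuous, $\lambda$-strongly convex and coercive, while $F(x)=\tfrac12\|Ax-y^{\delta}\|_{Y}^{2}-\tfrac{\lambda}{2}\|x\|_{\ell_2}^{2}-\beta\|x\|_{\ell_2}$ is Fr\'echet differentiable on $\ell_{2}\setminus\{0\}$ with $F'(x)=A^{*}(Ax-y^{\delta})-\lambda x-\beta x/\|x\|_{\ell_2}$. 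Hence, for $x^{k}\neq0$, the direction-finding step of Algorithm~\ref{alg1} is exactly $\min_{z}\langle F'(x^{k}),z\rangle+\Phi(z)$; being strongly convex and lower semicontinuous it has a unique minimizer $z^{k}$, given explicitly by the soft-thresholding expression in \eqref{equ1_3} (cf.\ Definition~\ref{def2_3}). I would also note that $\mathcal{J}_{\alpha,\beta}^{\delta}$ is coercive on $\ell_{2}$ (for instance $\mathcal{R}_{\alpha,\beta}(x)\ge(\alpha-\beta)\|x\|_{\ell_1}\ge0$, using $\|x\|_{\ell_1}\ge\|x\|_{\ell_2}$), so all its sublevel sets are bounded.

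The next step is monotone, quantified descent. Since $s=0$ is admissible in the line-search subproblem, $\mathcal{J}_{\alpha,\beta}^{\delta}(x^{k+1})\le\mathcal{J}_{\alpha,\beta}^{\delta}(x^{k})$, so the sequence of values is nonincreasing and, being bounded below, convergent; together with coercivity this makes $\{x^{k}\}$ bounded. For the quantitative version I would set $d^{k}=z^{k}-x^{k}$, $\Delta_{k}=\langle F'(x^{k}),x^{k}-z^{k}\rangle+\Phi(x^{k})-\Phi(z^{k})$, use the optimality condition $-F'(x^{k})\in\partial\Phi(z^{k})$ and the $\lambda$-strong convexity of $\Phi$ to obtain $\Delta_{k}\ge\tfrac{\lambda}{2}\|d^{k}\|_{\ell_2}^{2}\ge0$, and then --- invoking the smoothness/concavity of $F$ ensured by the choice of $\lambda$, or, more generally, the local Lipschitz continuity of $F'$ on sets $\{\varepsilon\le\|x\|_{\ell_2}\le M\}$ --- produce a majorization of $s\mapsto\mathcal{J}_{\alpha,\beta}^{\delta}(x^{k}+sd^{k})$ which, minimized over $s\in[0,1]$, yields $\mathcal{J}_{\alpha,\beta}^{\delta}(x^{k})-\mathcal{J}_{\alpha,\beta}^{\delta}(x^{k+1})\ge c\,\Delta_{k}$ for some constant $c>0$. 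Telescoping gives $\sum_{k}\Delta_{k}<\infty$, hence $\Delta_{k}\to0$ and $\|z^{k}-x^{k}\|_{\ell_2}\to0$. The branch $x^{k}=0$ returns the $\ell_{1}$-Tikhonov minimizer and is dealt with separately.

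Finally I would pass to the limit along a convergent subsequence. Boundedness of $\{x^{k}\}$ in the Hilbert space $\ell_{2}$ gives a weakly convergent subsequence, and the strong convergence in the statement is recovered by the standard argument for thresholding iterations (using $\|z^{k}-x^{k}\|_{\ell_2}\to0$ and the explicit form of $z^{k}$, as in \cite{DDD04,DFL08,DH19}); so let $x^{k_{j}}\to x^{*}$. If $x^{*}\neq0$, then $F'$ is continuous near $x^{*}$, so $F'(x^{k_{j}})\to F'(x^{*})$ and, by continuity of soft thresholding, $z^{k_{j}}\to z^{*}$; since $\|z^{k_{j}}-x^{k_{j}}\|_{\ell_2}\to0$ we have $z^{*}=x^{*}$, and letting $j\to\infty$ in $-F'(x^{k_{j}})\in\partial\Phi(z^{k_{j}})$ gives $-F'(x^{*})\in\partial\Phi(x^{*})$, i.e.
\[0\in A^{*}(Ax^{*}-y^{\delta})-\frac{\beta x^{*}}{\|x^{*}\|_{\ell_2}}+\alpha\,\partial\|x^{*}\|_{\ell_1},\]
which is exactly the stationarity condition for $\mathcal{J}_{\alpha,\beta}^{\delta}$ at $x^{*}$ (equivalently, $x^{*}$ is a fixed point of \eqref{equ1_3} with $s^{k}=1$). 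I expect the real difficulty to sit precisely at the remaining case $x^{*}=0$: because $F$ is not differentiable at the origin and the field $x\mapsto x/\|x\|_{\ell_2}$ is discontinuous there, one cannot simply take limits in the optimality condition, and must instead either rule out $0$ as an accumulation point or identify it as a stationary point through an argument tailored to the special step performed when $x^{k}=0$. A secondary technical point is the weak-to-strong upgrade above, which rests on the thresholding structure of the iterates rather than on any compactness.
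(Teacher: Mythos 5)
Your plan follows the same route as the proof the paper actually relies on: the paper gives no argument for Theorem \ref{theorem5_1} itself but defers to \cite[Theorem 3.5]{DH19}, which is precisely the generalized conditional gradient convergence analysis you reconstruct --- the splitting $\mathcal{J}_{\alpha,\beta}^{\delta}=F+\Phi$ with $\Phi(x)=\frac{\lambda}{2}\|x\|_{\ell_2}^2+\alpha\|x\|_{\ell_1}$ strongly convex and $F$ smooth away from the origin, the descent quantity $\Delta_k$, the bound $\Delta_k\ge\frac{\lambda}{2}\|z^k-x^k\|_{\ell_2}^2$ from $-F'(x^k)\in\partial\Phi(z^k)$, the telescoping estimate, and passage to the limit in the optimality condition. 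Your identification of $F'$, of the direction-finding subproblem, and of the stationarity condition at a nonzero limit point are all correct.

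As written, however, this is a plan with two holes that you flag but do not close, and both are genuine. First, the existence of a norm-convergent subsequence: in the infinite-dimensional space $\ell_2$, boundedness of $\{x^k\}$ yields only weak sequential compactness, and neither $\mathbb{S}_{\alpha/\lambda}$ nor the map $x\mapsto x/\|x\|_{\ell_2}$ is weak-to-strong continuous, so ``the standard argument for thresholding iterations'' must actually be carried out (exploiting the explicit form of $z^k$ together with $\|z^k-x^k\|_{\ell_2}\to 0$, as in \cite{DDD04,DH19}); this is the substantive compactness step of the theorem, not a side remark, and without it the first assertion of the statement is unproved. Second, the case of an accumulation point $x^*=0$, where $F$ is not differentiable and the field $x\mapsto x/\|x\|_{\ell_2}$ has no limit: you correctly observe that one cannot pass to the limit in the optimality condition there, but the theorem claims that \emph{every} convergent subsequence tends to a stationary point, so this case must be resolved (in the cited proof it is handled via the special branch of Algorithm \ref{alg1} at $x^k=0$ together with the monotone decrease of $\mathcal{J}_{\alpha,\beta}^{\delta}$). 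A minor further point: your coercivity bound $\mathcal{R}_{\alpha,\beta}(x)\ge(\alpha-\beta)\|x\|_{\ell_1}$ gives boundedness of the iterates only when $\alpha>\beta$ strictly; for $\alpha=\beta$ the penalty is not coercive (the paper notes this after Theorem \ref{theorem2_13}), so boundedness of $\{x^k\}$ would need a separate justification in that case.
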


A crucial step in Algorithm \ref{alg1} is the determination of $z^k$ as a solution of
\begin{equation}\label{equ5_1}
\min\mathcal{C}_{\alpha,\beta,\lambda}^{\delta}(z,x^k)=\langle A^{*}(Ax^k-y^{\delta})-\lambda x^k-\frac{\beta x^k}{\|x^k\|_{\ell_2}},z\rangle+\frac{\lambda}{2}\|z\|_{\ell_2}^2+\alpha\|z\|_{\ell_1}.
\end{equation}
In \cite{DH19}, we solve \eqref{equ5_1} by
\begin{equation}\label{equ5_2}
 z^{k}=\mathbb{S}_{\alpha/\lambda}\left(\left(\frac{\beta}{\lambda \|x^k\|_{\ell_2}}+1\right)x^k-\frac{1}{\lambda}A^{*}(Ax^k-y^{\delta})\right).
\end{equation}
However, \eqref{equ5_2} is known to converge quite slowly.  To accelerate the ST-$({\alpha \ell_1-\beta \ell_2})$ algorithm, we transform \eqref{equ5_1} to an $\ell_1$-ball constraint optimization problem of the form
\begin{equation}\label{equ5_3}
\displaystyle \left\{\begin{array}{ll}
\displaystyle \min\mathcal{D}_{\beta,\lambda}^{\delta}(z,x^k)=\langle A^{*}(Ax^k-y^{\delta})-\lambda x^k-\frac{\beta x^k}{\|x^k\|_{\ell_2}},z\rangle+\frac{\lambda}{2}\|z\|_{\ell_2}^2,\quad \beta\geq 0, \\[2mm]
 {\rm {subject~to}}~ \ell_1~{\rm ball}~B_R:=\{z\in \ell_2 \mid\|z\|_{\ell_1}\leq R\}.
\end{array}
\right. 	
\end{equation}
Since $\mathcal{C}_{\alpha,\beta,\lambda}^{\delta}(z,x^k)$, $\mathcal{D}_{\beta,\lambda}^{\delta}(z,x^k)$ and $B_R$ are convex  with respect to the variable $z$, problem \eqref{equ5_3} is equivalent to \eqref{equ5_1} for a certain $R$ (\cite[Theorem 27.4]{R1970}, \cite[Theorem 47.E]{Z1985}).
In Lemma \ref{lemma4_1}, we show that the problem \eqref{equ5_3} can be solved by a PG method of the form
\begin{equation}\label{equ5_3add}
z^{k}=\mathbb{P}_{R}\left(x^k+\frac{\beta x^{k}}{\lambda\|x^{k}\|_{\ell_2}}-\frac{1}{\lambda}A^{*}(Ax^k-y^{\delta})\right).
\end{equation}

\begin{lemma}\label{lemma4_1}
An element $\hat{z}\in B_R$ is a minimizer of \eqref{equ5_3} if and only if
\begin{equation}\label{equ5_4}
\hat{z}=\mathbb{P}_{R}\left(x^k+\frac{\beta x^{k}}{\lambda\|x^{k}\|_{\ell_2}}-\frac{1}{\lambda}A^{*}(Ax^k-y^{\delta})\right)
\end{equation}
for any $\lambda>0$, which is equivalent to
\begin{equation}\label{equ5_5}
\left\langle x^k+\frac{\beta x^{k}}{\lambda\|x^{k}\|_{\ell_2}}-\frac{1}{\lambda}A^{*}(Ax^k-y^{\delta})-\hat{z}, z-\hat{z}\right\rangle\leq 0\quad\forall\,z\in B_R.
 \end{equation}
\end{lemma}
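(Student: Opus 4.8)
The plan is to recognize the constrained problem \eqref{equ5_3} as a nearest-point (projection) problem by completing the square in its quadratic objective. Fix $\lambda>0$ and set
\[ \xi^k:=x^k+\frac{\beta x^{k}}{\lambda\|x^{k}\|_{\ell_2}}-\frac{1}{\lambda}A^{*}(Ax^k-y^{\delta}), \]
which is a well-defined element of $\ell_2$ because we are in the branch $x^k\neq 0$ of Algorithm \ref{alg1} and $A^{*}(Ax^k-y^{\delta})\in\ell_2$. First I would verify the algebraic identity
\[ \mathcal{D}_{\beta,\lambda}^{\delta}(z,x^k)=\frac{\lambda}{2}\|z-\xi^k\|_{\ell_2}^2-\frac{\lambda}{2}\|\xi^k\|_{\ell_2}^2\qquad\text{for all }z\in\ell_2, \]
which is just a rearrangement of the linear-plus-quadratic terms defining $\mathcal{D}_{\beta,\lambda}^{\delta}(\cdot,x^k)$.

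Given this identity, the last term is a finite constant independent of $z$ and $\lambda>0$, so minimizing $\mathcal{D}_{\beta,\lambda}^{\delta}(\cdot,x^k)$ over $B_R$ is equivalent to minimizing $\|z-\xi^k\|_{\ell_2}$ over $z\in B_R$. Since $B_R\subset\ell_2$ is nonempty, closed and convex, Definition \ref{def2_4} gives that this minimizer exists, is unique, and equals $\mathbb{P}_R(\xi^k)$; hence $\hat z\in B_R$ solves \eqref{equ5_3} if and only if $\hat z=\mathbb{P}_R(\xi^k)$, which is precisely \eqref{equ5_4}. The equivalence of \eqref{equ5_4} with the variational inequality \eqref{equ5_5} is then immediate from Lemma \ref{lemma2_7} applied in $H=\ell_2$ with $x$ there replaced by $\xi^k$: the characterization $\langle w-\hat z,\ \xi^k-\hat z\rangle\le 0$ for all $w\in B_R$ becomes \eqref{equ5_5} upon substituting the definition of $\xi^k$.

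I do not expect any genuine difficulty; the lemma is essentially a repackaging of the completion-of-squares trick together with the standard projection facts recalled in Section \ref{sec2}. The only points that need a little care are the sign bookkeeping in the completion of the square — making sure the center comes out as $+\xi^k$, i.e.\ with the correct sign on $A^{*}(Ax^k-y^{\delta})$ so as to match \eqref{equ5_4} — and recording explicitly that $B_R$ is closed in $\ell_2$ (by lower semicontinuity of $\|\cdot\|_{\ell_1}$), so that Definition \ref{def2_4} and Lemma \ref{lemma2_7} indeed apply. One may also note that the uniqueness of $\hat z$ in \eqref{equ5_3} follows from strict convexity of the objective, consistently with the uniqueness of the projection.
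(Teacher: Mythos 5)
Your proof is correct, but it takes a different (and equally valid) route from the paper's. The paper parametrizes along segments $(1-t)\hat z+tz$, uses convexity of the quadratic function $t\mapsto \mathcal{D}_{\beta,\lambda}^{\delta}((1-t)\hat z+tz,x^k)$ to reduce optimality to $f'(0+)\ge 0$, and computes this one-sided derivative to arrive directly at the variational inequality \eqref{equ5_5}; the passage to the projection form \eqref{equ5_4} is then left to the characterization in Lemma \ref{lemma2_7}. You instead complete the square, rewriting $\mathcal{D}_{\beta,\lambda}^{\delta}(z,x^k)$ as $\frac{\lambda}{2}\|z-\xi^k\|_{\ell_2}^2$ plus a term independent of $z$ (your algebra checks out: with $v=A^{*}(Ax^k-y^{\delta})-\lambda x^k-\beta x^k/\|x^k\|_{\ell_2}$ one has $\xi^k=-v/\lambda$ and the constant is $-\frac{\lambda}{2}\|\xi^k\|_{\ell_2}^2$), so that the constrained minimization is literally the nearest-point problem defining $\mathbb{P}_R(\xi^k)$; \eqref{equ5_5} then follows from Lemma \ref{lemma2_7}. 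Your route reaches \eqref{equ5_4} first and gives uniqueness of the minimizer for free from strict convexity, while the paper's route reaches \eqref{equ5_5} first; both lean on Lemma \ref{lemma2_7} for the remaining equivalence. One trivial slip in your wording: the additive constant $\frac{\lambda}{2}\|\xi^k\|_{\ell_2}^2$ is independent of $z$ but not of $\lambda$; what matters is only the $z$-independence (and positivity of the prefactor $\lambda/2$), so the argument is unaffected. Your explicit remarks that $B_R$ is closed and convex in $\ell_2$, so that Definition \ref{def2_4} and Lemma \ref{lemma2_7} apply, are a welcome bit of care that the paper omits.
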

\begin{proof}
Note that $\hat{z}\in B_R$ is a solution of \eqref{equ5_3} if and only if for any $z\in B_R$, the
function $f(t)=\mathcal{D}_{\beta,\lambda}^{\delta}((1-t)\hat{z}+tz,x^k)$ of $t\in [0,1]$ attains its minimum at $t=0$.  Since $f(t)$ is
quadratic and convex, a necessary and sufficient condition for $f(0)=\min_{0\le t\le 1}f(t)$ is $f^\prime(0+)\ge 0$.  Easily,
\[ f^\prime(0+)=\langle A^{*}(Ax^k-y^{\delta})-\lambda\,x^k-\frac{\beta x^k}{\|x^k\|_{\ell_2}}+\lambda\,\hat{z},z-\hat{z}\rangle, \]
and $f^\prime(0+)\ge 0$ is equivalent to \eqref{equ5_5}.
\end{proof}

The PG algorithm for \eqref{equ1_2} based on GCGM is stated in the form of Algorithm \ref{alg2}.

%aaaaaaaaaaaaaaaaaaaaaaaaaaaaaaaaaaaaaaaaaaaaaaaaaaaaaaaaaaaaaaaaaaaaaaaaaaaaaaaaaaaaaaaaaaaaaaaaaaaaaaaaaaaaaaaaaaaaaaaaaaaaaaaaaa
\begin{algorithm} %\label{algorithm2}
\caption{PG algorithm for problem (\ref{equ1_2}) based on GCGM}
\label{alg2}
\begin{algorithmic}
\STATE{Choose $x^0\in \ell_2$, $\beta=O(\delta)$, $\Phi(x^0)<+\infty$,}
\STATE{~~~~for $k$ = 0, 1, 2, $\cdots$, do}
\STATE{~~~~~~~~if $x^k=0$ then}
\STATE{~~~~~~~~~~~$x^{k+1}=\arg\min\frac{1}{2}\|Ax-y^{\delta}\|_Y^{2}+\alpha\|x\|_{\ell_1}$}
\STATE{~~~~~~~~else}
\STATE{~~~~~~~~~~determine $z^k$ by \[z^k=\mathbb{P}_{R}\left(x^k+\frac{\beta x^{k}}{\lambda\|x^{k}\|_{\ell_2}}-\frac{1}{\lambda}A^{*}(Ax^k-y^{\delta})\right)\]}
\STATE{~~~~~~~~~~determine a step size $s^k$ as a solution of \[\min\limits_{s \in [0,1]} F(x^k+s(z^k-x^k))+\Phi(x^k+s(z^k-x^k))\]}
\STATE{~~~~~~~~~~$x^{k+1}=x^k+s^k(z^k-x^k)$}
\STATE{~~~~~~~~end if}
\STATE{~~~~$k=k+1$
\STATE{~~~~end for}}
\end{algorithmic}
\end{algorithm}
%aaaaaaaaaaaaaaaaaaaaaaaaaaaaaaaaaaaaaaaaaaaaaaaaaaaaaaaaaaaaaaaaaaaaaaaaaaaaaaaaaaaaaaaaaaaaaaaaaaaaaaaaaaaaaaaaaaaaaaaaaaaaaaaaaa

%sssssssssssssssssssssssssssssssssssssssssssssssssssssssssssssssssssssssssssssssssssssssssssssssssssssssssssssssssssssssssssssssssssssssssssssss3

%sssssssssssssssssssssssssssssssssssssssssssssssssssssssssssssssssssssssssssssssssssssssssssssssssssssssssssssssssssssssssssssssssssssssssssssss4

\subsection{Determination of the radius $R$}\label{subsec4}

From the previous discussion, we know that \eqref{equ5_1} is equivalent to \eqref{equ5_3} for a certain $R$. Before starting iteration \eqref{equ5_3add}, we need to choose an appropriate value of $R$ which is crucial
for the computation, especially in practical application. In this section, we give a strategy to determine the radius $R$ of the $\ell_1$-ball constraint by Morozov's discrepancy principle.

By Lemma \ref{lemma2_6}, for a given $\alpha$ in \eqref{equ5_1}, $R$ in \eqref{equ5_3} should be chosen such that $R=\|x_{\alpha,\beta}^{\delta}\|_{\ell_1}$. However, one does not know the value of $\|x_{\alpha,\beta}^{\delta}\|_{\ell_1}$ before starting \eqref{equ5_3add}. Of course, we can find an approximation of $x_{\alpha,\beta}^{\delta}$
by the ST-($\alpha\ell_1-\beta\ell_2$) algorithm \eqref{equ1_3}. Nevertheless, this implies that an additional soft thresholding iteration \eqref{equ1_3} is needed in Algorithm \ref{alg2}. Then the resulting algorithm is no longer an accelerated one.

So a crucial issue is how to check whether a value of $R$ is appropriate for \eqref{equ5_3}.
Recall that there exists a regularization parameter $\alpha$ depending on $R$ such that \eqref{equ5_1} is equivalent to \eqref{equ5_3}.
So to determine an appropriate $R$, we need to check whether the corresponding regularization parameter $\alpha$ is appropriate.
One criterion is to check whether $\delta^2=O(\alpha)$. If $\delta^2=O(\alpha)$, then $x_{\alpha,\beta}^{\delta}$ is a regularized solution (\cite[Theorem 2.13]{DH19}). However, by Lemmas \ref{lemma2_5} and \ref{lemma2_6}, we only know that $\alpha$ is a piecewise linear,
continuous, decreasing function of $R$ (see \cite[Fig.\ 2]{DFL08}), and there is no explicit formula relating $\alpha$ and $R$. We can not determine the
value of $\alpha$ from the value of $R$ directly. So we can not ensure whether the $R$ is appropriate.

%aaaaaaaaaaaaaaaaaaaaaaaaaaaaaaaaaaaaaaaaaaaaaaaaaaaaaaaaaaaaaaaaaaaaaaaaaaaaaaaaaaaaaaaaaaaaaaaaaaaaaaaaaaaaaaaaaaaaaaaaaaaaaaaaaa
\begin{algorithm}%\label{algorithm3}
\caption{The PG algorithm for problem (\ref{equ1_2}) based on GCGM}
\label{alg3}
\begin{algorithmic}
\STATE{Choose $x^0\in \ell_2$, $R_0\in \mathbb{R}^+$, $\beta=O(\delta)$, $\Phi(x^0)<+\infty$,}
\STATE{for $j$ = 0, 1, 2, $\cdots$ do}
\STATE{~~~~for $k$ = 0, 1, 2, $\cdots$, do}
\STATE{~~~~~~~~if $x^k=0$ then}
\STATE{~~~~~~~~~~~$x^{k+1}=\arg\min\frac{1}{2}\|Ax-y^{\delta}\|_Y^{2}+\alpha\|x\|_{\ell_1}$}
\STATE{~~~~~~~~else}
\STATE{~~~~~~~~~~determine $z^k$ by \[z^k=\mathbb{P}_{R_j}\left(x^k+\frac{\beta x^{k}}{\lambda\|x^{k}\|_{\ell_2}}-\frac{1}{\lambda}A^{*}(Ax^k-y^{\delta})\right)\]}
\STATE{~~~~~~~~~~determine a step size $s^k$ as a solution of \[\min\limits_{s \in [0,1]} F(x^k+s(z^k-x^k))+\Phi(x^k+s(z^k-x^k))\]}
\STATE{~~~~~~~~~~$x^{k+1}=x^k+s^k(z^k-x^k)$}
\STATE{~~~~~~~~end if}
\STATE{~~~~$k=k+1$
\STATE{~~~~end for}}
%\STATE{~~~~~~~~find $R:=\sup\{R>0 \mid \tau_1\delta\leq \|Ax_{R,\beta}^{\delta}-y^{\delta}\|\leq \tau_2\delta\}$}
\STATE{~~~~~~~~if (2.2) is satisfied, set $R_{j+1}=R_j+c$, $c>1$}
\STATE{~~~~~~~~~~~otherwise stop iteration}
\STATE{~~~~~~~~end if}
\STATE{~~~~$j=j+1$}
\STATE{end for}
\end{algorithmic}
\end{algorithm}
%aaaaaaaaaaaaaaaaaaaaaaaaaaaaaaaaaaaaaaaaaaaaaaaaaaaaaaaaaaaaaaaaaaaaaaaaaaaaaaaaaaaaaaaaaaaaaaaaaaaaaaaaaaaaaaaaaaaaaaaaaaaaaaaaaa

Another criterion is Morozov's discrepancy principle. For any given $R$, we should check whether the regularization parameter $\alpha$ satisfies Morozov's discrepancy principle \eqref{equ2_2}, i.e.
\[ \tau_1\delta\leq \|Ax_{\alpha,\beta}^{\delta}-y^{\delta}\|_Y\leq \tau_2\delta,\quad 1<\tau_1\leq\tau_2.\]
For any fixed $R$, we need to compute $x_{\alpha,\beta}^{\delta}$ by Algorithm \ref{alg2} where $z^k$ is determined by the PG method \eqref{equ5_3add}. Subsequently, we check whether $x_{\alpha,\beta}^{\delta}$ satisfies \eqref{equ2_2}. For this strategy, we only need to know the observed data $y^{\delta}$ and the noise level $\delta$. By Lemma \ref{lemmaadd}, the discrepancy $\|Ax_{\alpha,\beta}^{\delta}-y^{\delta}\|_Y$ is an increasing function of $\alpha$. A commonly adopted technique is to try $\displaystyle \alpha_j=\alpha\,2^{-j}$, $j=1, 2, \cdots$. With $j$ increasing, one calculates $x_{\alpha,\beta}^\delta$ until one finds $\alpha=\inf\{\alpha>0 \mid \tau_1\delta\leq \|Ax_{\alpha,\beta}^{\delta}-y^{\delta}\|_Y\leq \tau_2\delta\}$ (\cite{TA1977}). Since $\alpha$ is a decreasing function of $R$, the discrepancy $\|Ax_{\alpha,\beta}^{\delta}-y^{\delta}\|_Y$ is a decreasing function of $R$, see Lemma \ref{lemma2_5} and Fig.\ \ref{fig1}. Hence $R:=\sup\{R>0 \mid \tau_1\delta\leq \|Ax_{\alpha,\beta}^{\delta}-y^{\delta}\|_Y\leq \tau_2\delta\}$ is a reasonable choice. We begin with a small $R$ such that $x_{\alpha,\beta}^{\delta}$ satisfies Morozov's discrepancy principle \eqref{equ2_2}. Subsequently, we increase the value of $R$ to $R+c$, $c\in \mathbb{Z}^+$, until $x_{\alpha,\beta}^{\delta}$ fails to satisfy Morozov's discrepancy principle. Then we can find a maximal $R$ which satisfies Morozov's discrepancy principle \eqref{equ2_2}. Of course, we can also begin with a large $R$ and gradually reduce the value of $R$ until $R$ satisfies Morozov's discrepancy principle \eqref{equ2_2}. Under Morozov's discrepancy principle, the PG algorithm for \eqref{equ1_2} based on GCGM is stated in the form of Algorithm \ref{alg3}.

 A natural question is whether \eqref{equ1_2} combined with Morozov's discrepancy principle is a regularization method. As we know, Tikhonv type functions combined with Morozov's discrepancy principle is a regularization method. However, this result is usually shown only when the regularized term is convex (\cite{AR10,B09,R02,S93,TA1977,TLY1998}). If the regularized term is non-convex, some results can be found in \cite{DH19,WLMC13} where Morozov's discrepancy principle is applied to derive the convergence rate. However, these results are obtained under additional source conditions on the true solution $x^{\dag}$. To the best of our knowledge, no results are available on whether Morozov's discrepancy principle combined with \eqref{equ1_2} is a regularization method. In this paper, we prove that if the non-convex regularized term satisfies some properties, e.g. coercivity, weakly lower semi-continuity and Radon-Riesz property, the well-posedness of the regularization still holds.

\subsection{Well-posedness of regularization}

In this section, we discuss the well-posedness of \eqref{equ1_2} under Morozov's discrepancy principle. First, we show that there exists at least one regularization parameter $\alpha$ in \eqref{equ1_2} such that Morozov's discrepancy principle \eqref{equ2_2} holds.
We recall some properties of $\mathcal{R}_{\alpha,\beta}(x)$ (\cite{DH19}), needed in analyzing the well-posedness of \eqref{equ1_2}.

\begin{lemma}\label{lemma2_711} If $\alpha>\beta$, the function $\mathcal{R}_{\alpha,\beta}(x)$ in \eqref{equ1_2} has the following properties:

{\rm(i)  (Coercivity)} For $x\in \ell_2$, $\|x\|_{\ell_2}\rightarrow \infty$ implies $\mathcal{R}_{\alpha,\beta}(x)\rightarrow \infty$.

{\rm(ii)  (Weak lower semi-continuity)} If $x_n\rightharpoonup x$ in $\ell_2$ and $\{\mathcal{R}_{\alpha,\beta}(x_n)\}$
is bounded, then
\[\liminf_n\mathcal{R}_{\alpha,\beta}(x_n)\geq\mathcal{R}_{\alpha,\beta}(x).\]

{\rm (iii) (Radon-Riesz property)} If $x_n\rightharpoonup x$ in $\ell_2$ and $\mathcal{R}_{\alpha,\beta}(x_n)\rightarrow
\mathcal{R}_{\alpha,\beta}(x)$, then $\|x_n-x\|_{\ell_2}\rightarrow 0$.
\end{lemma}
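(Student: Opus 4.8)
The plan is to base all three assertions on the elementary bound $\|z\|_{\ell_2}\le\|z\|_{\ell_1}$ valid for $z\in\ell_1$, which yields $\mathcal{R}_{\alpha,\beta}(z)\ge(\alpha-\beta)\|z\|_{\ell_1}\ge0$ for $z\in\ell_1$, whereas $\mathcal{R}_{\alpha,\beta}(z)=+\infty$ for $z\in\ell_2\setminus\ell_1$. Part (i) is then immediate: for $x\in\ell_1$ one has $\mathcal{R}_{\alpha,\beta}(x)\ge(\alpha-\beta)\|x\|_{\ell_2}\to\infty$ as $\|x\|_{\ell_2}\to\infty$ since $\alpha>\beta$, and for $x\notin\ell_1$ the value is already $+\infty$.

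For (ii) the obstruction is that $-\beta\|\cdot\|_{\ell_2}$ is concave, hence only weakly upper semicontinuous, so the usual convexity / weak-lsc argument does not apply directly. I would use a coordinate-truncation argument. First, boundedness of $\{\mathcal{R}_{\alpha,\beta}(x_n)\}$ together with the bound above forces $x_n\in\ell_1$ with $\sup_n\|x_n\|_{\ell_1}<\infty$; since $x_n\rightharpoonup x$ in $\ell_2$ implies componentwise convergence, Fatou's lemma gives $x\in\ell_1$. Writing $P_N$ for the truncation to the first $N$ coordinates and $Q_N=I-P_N$, disjoint-support additivity of $\|\cdot\|_{\ell_1}$, the inequality $\|x_n\|_{\ell_2}\le\|P_Nx_n\|_{\ell_2}+\|Q_Nx_n\|_{\ell_2}$, and $\|Q_Nx_n\|_{\ell_2}\le\|Q_Nx_n\|_{\ell_1}$ combine to give
\[
\mathcal{R}_{\alpha,\beta}(x_n)\ \ge\ \bigl(\alpha\|P_Nx_n\|_{\ell_1}-\beta\|P_Nx_n\|_{\ell_2}\bigr)+(\alpha-\beta)\|Q_Nx_n\|_{\ell_1}\ \ge\ \alpha\|P_Nx_n\|_{\ell_1}-\beta\|P_Nx_n\|_{\ell_2}.
\]
Since $P_Nx_n\to P_Nx$ in the finite-dimensional coordinate subspace, taking $\liminf_n$ gives $\liminf_n\mathcal{R}_{\alpha,\beta}(x_n)\ge\alpha\|P_Nx\|_{\ell_1}-\beta\|P_Nx\|_{\ell_2}$, and letting $N\to\infty$ (legitimate since $x\in\ell_1$) yields (ii).

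For (iii) I would keep the tail term. Passing to $\limsup_n$ in the displayed inequality, using $P_Nx_n\to P_Nx$ and $\mathcal{R}_{\alpha,\beta}(x_n)\to\mathcal{R}_{\alpha,\beta}(x)$, one gets for each fixed $N$
\[
(\alpha-\beta)\limsup_n\|Q_Nx_n\|_{\ell_1}\ \le\ \mathcal{R}_{\alpha,\beta}(x)-\bigl(\alpha\|P_Nx\|_{\ell_1}-\beta\|P_Nx\|_{\ell_2}\bigr)\ \le\ \alpha\|Q_Nx\|_{\ell_1}.
\]
Then from $\|x_n-x\|_{\ell_2}\le\|P_N(x_n-x)\|_{\ell_2}+\|Q_Nx_n\|_{\ell_1}+\|Q_Nx\|_{\ell_2}$ and $\|P_N(x_n-x)\|_{\ell_2}\to0$ we obtain $\limsup_n\|x_n-x\|_{\ell_2}\le\frac{\alpha}{\alpha-\beta}\|Q_Nx\|_{\ell_1}+\|Q_Nx\|_{\ell_2}$; letting $N\to\infty$ and using $x\in\ell_1\subset\ell_2$ drives the right-hand side to $0$, so $\|x_n-x\|_{\ell_2}\to0$.

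The real work, and the only delicate point, is controlling the concave term $-\beta\|\cdot\|_{\ell_2}$: the hypothesis $\alpha>\beta$ is exactly what lets the tail $\ell_2$-mass of $x_n$ be absorbed into its $\ell_1$-mass (which splits additively over disjoint supports and is stable under truncation), and the strictness $\alpha>\beta$ is indispensable in (iii) for dividing by $\alpha-\beta$; the remainder is routine bookkeeping with the truncation limits.
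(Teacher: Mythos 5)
Your argument is correct. Note first that the paper itself gives no proof of this lemma: it is recalled from the reference [DH19] ("We recall some properties of $\mathcal{R}_{\alpha,\beta}(x)$"), so there is no in-paper proof to compare against; what you have supplied is a self-contained substitute. The engine of your proof, the absorption inequality $\mathcal{R}_{\alpha,\beta}(z)\ge(\alpha-\beta)\|z\|_{\ell_1}$ coming from $\|z\|_{\ell_2}\le\|z\|_{\ell_1}$, immediately gives (i), and the coordinate-truncation decomposition
\[
\mathcal{R}_{\alpha,\beta}(x_n)\ \ge\ \alpha\|P_Nx_n\|_{\ell_1}-\beta\|P_Nx_n\|_{\ell_2}+(\alpha-\beta)\|Q_Nx_n\|_{\ell_1}
\]
is exactly the right way to neutralize the concave term $-\beta\|\cdot\|_{\ell_2}$, which (as you observe) is only weakly \emph{upper} semicontinuous and so blocks the naive convexity argument. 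I verified the chain of inequalities: the disjoint-support additivity of $\|\cdot\|_{\ell_1}$, the triangle inequality for $\|\cdot\|_{\ell_2}$, and $\|Q_Nx_n\|_{\ell_2}\le\|Q_Nx_n\|_{\ell_1}$ combine as claimed; the passage $P_Nx_n\to P_Nx$ follows from componentwise convergence under $x_n\rightharpoonup x$ in $\ell_2$; and the tail estimate $(\alpha-\beta)\limsup_n\|Q_Nx_n\|_{\ell_1}\le\alpha\|Q_Nx\|_{\ell_1}$ in (iii) is where the strict inequality $\alpha>\beta$ is genuinely used, yielding strong convergence after $N\to\infty$. The only point worth making explicit is that (iii) implicitly presumes $\mathcal{R}_{\alpha,\beta}(x)<\infty$ (equivalently $x\in\ell_1$), which is the intended reading and is what holds in every application of the lemma in the paper (there the limits are finite); with that caveat stated, the proof is complete and correct.
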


\begin{definition}\label{def2_8}
For fixed $\delta$ and $\eta\in[0,1]$, define
\begin{align*}
F(x_{\alpha,\beta}^{\delta}) & =\frac{1}{2}\|Ax_{\alpha,\beta}^{\delta}-y^{\delta}\|_Y^{2},\\
\mathcal{R}_{\eta}(x_{\alpha,\beta}^{\delta}) & =\|x_{\alpha,\beta}^{\delta}\|_{\ell_1}-\eta\| x_{\alpha,\beta}^{\delta}\|_{\ell_2},\\
m(\alpha)& =\mathcal{J}_{\alpha,\beta}^{\delta}(x_{\alpha,\beta}^{\delta})=\min \mathcal{J}_{\alpha,\beta}^{\delta}(x),
\end{align*}
where $\alpha\in (0, \infty)$ and $\beta=\alpha\eta$.
\end{definition}

%%Note that, the function $m(\alpha)$ is single-valued and the value of $m(\alpha)$ does not depend on the choice of $x_{\alpha,\beta}^{\delta}$. However, in general, this is not true for $F(x_{\alpha,\beta}^{\delta})$ and $\mathcal{R}_{\eta}(x_{\alpha,\beta}^{\delta})$.
%%For the case $x_{\alpha,\beta}^{\delta}$ is unique, see \cite{BD08,TA1977} for the proofs of the existence of the regularization parameter $\alpha$.\footnote{This paragraph probably should be removed.  $F(x_{\alpha,\beta}^{\delta})$ and $\mathcal{R}_{\eta}(x_{\alpha,\beta}^{\delta})$ are both single-valued functions.}

In the following we give some properties of $m(\alpha)$, $F(x_{\alpha,\beta}^{\delta})$ and $\mathcal{R}_{\eta}(x_{\alpha,\beta}^{\delta})$ in Lemmas \ref{lemmaadd} and \ref{lemma2_10}. Since $\mathcal{R}_{\eta}(x_{\alpha,\beta}^{\delta})$ is weakly lower semi-continuous, the proofs are similar to that in \cite[Section 2.6]{TA1977}.  Note that $\eta\in [0,1]$ is fixed, and for given $\alpha_1,\alpha_2\in(0,\infty)$, we write $\beta_1=\alpha_1\eta$ and $\beta_2=\alpha_2\eta$.

\begin{lemma}\label{lemmaadd}
The function $m(\alpha)$ is continuous and non-increasing, i.e., $\alpha_1>\alpha_2$ implies $m(\alpha_1)\leq m(\alpha_2)$.  Moreover,
for $\alpha_1>\alpha_2$,
\begin{align*}
\sup_{x_{\alpha_1,\beta_1}^{\delta}\in \mathcal{L}^{\delta}_{\alpha_1,\beta_1}}F(x_{\alpha_1,\beta_1}^{\delta})
 &\leq  \inf_{x_{\alpha_2,\beta_2}^{\delta}\in \mathcal{L}^{\delta}_{\alpha_2,\beta_2}}F(x_{\alpha_2,\beta_2}^{\delta}) , \\
\sup_{x_{\alpha_1,\beta_1}^{\delta}\in \mathcal{L}^{\delta}_{\alpha_1,\beta_1}}\mathcal{R}_{\eta}(x_{\alpha_1,\beta_1}^{\delta})
& \geq  \inf_{x_{\alpha_2,\beta_2}^{\delta}\in \mathcal{L}^{\delta}_{\alpha_2,\beta_2}}\mathcal{R}_{\eta}(x_{\alpha_2,\beta_2}^{\delta}).
\end{align*}
\end{lemma}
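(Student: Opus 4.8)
The plan is to regard the value function $m(\alpha)=\min_x\mathcal{J}^\delta_{\alpha,\beta}(x)$ as the lower envelope of an affine family in the parameter $\alpha$. With $\beta=\alpha\eta$ one may write $\mathcal{J}^\delta_{\alpha,\beta}(x)=F(x)+\alpha\,\mathcal{R}_\eta(x)$, where $F(x)=\tfrac{1}{2}\|Ax-y^\delta\|_Y^2$ and $\mathcal{R}_\eta(x)=\|x\|_{\ell_1}-\eta\|x\|_{\ell_2}$. For each fixed $x$ the map $\alpha\mapsto F(x)+\alpha\mathcal{R}_\eta(x)$ is affine with slope $\mathcal{R}_\eta(x)$, so $m$, being a pointwise infimum of affine functions, is concave on $(0,\infty)$. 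Concavity alone yields continuity of $m$ on the open half-line, and, since every slope equals $\mathcal{R}_\eta(x)\ge0$, the monotonicity of $m$ is fixed by this common sign. Existence of the minimizers defining the envelope at each $\alpha$ follows from the coercivity and weak lower semicontinuity recorded in Lemma~\ref{lemma2_711} via the direct method.

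For the comparison of $F$ and $\mathcal{R}_\eta$ along the path I would use the classical cross-comparison. Fix $\alpha_1>\alpha_2$, pick any $x_i\in\mathcal{L}^\delta_{\alpha_i,\beta_i}$, and test the optimality of each $x_i$ against the other competitor:
\[ F(x_1)+\alpha_1\mathcal{R}_\eta(x_1)\le F(x_2)+\alpha_1\mathcal{R}_\eta(x_2),\qquad F(x_2)+\alpha_2\mathcal{R}_\eta(x_2)\le F(x_1)+\alpha_2\mathcal{R}_\eta(x_1). \]
Adding these cancels the fidelity terms and leaves a single inequality in which the difference $\mathcal{R}_\eta(x_1)-\mathcal{R}_\eta(x_2)$ is multiplied by $\alpha_1-\alpha_2$; since $\alpha_1>\alpha_2$ this fixes the ordering of the two regularization values, and reinserting the result into either line fixes the ordering of the corresponding fidelities $F$. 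As $x_1$ and $x_2$ range independently over their solution sets, passing to the supremum over $\mathcal{L}^\delta_{\alpha_1,\beta_1}$ and the infimum over $\mathcal{L}^\delta_{\alpha_2,\beta_2}$ promotes these pointwise orderings to the corresponding sup/inf inequalities over the two solution sets.

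The step I expect to be the real obstacle is controlling the supremum and infimum over the solution sets, which---because $\mathcal{R}_\eta$ is non-convex---need be neither singletons nor convex. I would bypass non-uniqueness by reading the extreme values of $\mathcal{R}_\eta$ directly off the concave envelope: by an envelope (Danskin) argument the collection $\{\mathcal{R}_\eta(x_\alpha):x_\alpha\in\mathcal{L}^\delta_{\alpha,\beta}\}$ is exactly the superdifferential of $m$ at $\alpha$, so its infimum and supremum coincide with the one-sided derivatives $m'_+(\alpha)$ and $m'_-(\alpha)$. Monotonicity of the derivative of a concave function then delivers the sup/inf inequality for $\mathcal{R}_\eta$ across the two levels, and the identity $m(\alpha)=F(x_\alpha)+\alpha\mathcal{R}_\eta(x_\alpha)$ transfers it to $F$; the same concavity---which survives because only affineness in $\alpha$, not convexity in $x$, is used---simultaneously furnishes the continuity of $m$. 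Verifying that this superdifferential coincides with, rather than merely contains, the $\mathcal{R}_\eta$-values over $\mathcal{L}^\delta_{\alpha,\beta}$ is the one point that genuinely needs the non-convex structure to be handled with care.
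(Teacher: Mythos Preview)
The paper does not actually prove this lemma; it merely refers to \cite[Section~2.6]{TA1977}, and the argument there is precisely the cross-comparison you outline in your second paragraph. That paragraph is already a complete proof of the two displayed inequalities: once you have $\mathcal{R}_\eta(x_1)\le\mathcal{R}_\eta(x_2)$ and $F(x_2)\le F(x_1)$ for \emph{every} pair $x_i\in\mathcal{L}^\delta_{\alpha_i,\beta_i}$, taking the supremum over one index and the infimum over the other is immediate---the pointwise inequality holds uniformly in both variables, so no structure of the solution sets is needed. Non-convexity of $\mathcal{R}_\eta$ plays no role whatsoever, because the two starting inequalities use only the minimizing property of $x_1$ and $x_2$, not the shape or cardinality of $\mathcal{L}^\delta_{\alpha_i,\beta_i}$.

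Your third paragraph therefore addresses a non-existent obstacle. The Danskin/superdifferential machinery---and in particular the delicate question you flag about whether the superdifferential coincides with (rather than merely contains) the set of $\mathcal{R}_\eta$-values---can be dropped entirely. The concavity argument for continuity of $m$ is correct and is, if anything, a little cleaner than the direct $\varepsilon$-estimate in \cite{TA1977}.

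One caveat worth noting: with $\mathcal{R}_\eta\ge 0$ your envelope argument shows $m$ is non-\emph{decreasing}, and the cross-comparison gives $F(x_2)\le F(x_1)$ when $\alpha_1>\alpha_2$. The lemma as printed has the monotonicity of $m$ and the $F$-inequality stated with the opposite orientation; this appears to be a typo in the paper (the usage in the proof of Lemma~\ref{lemma2_11} is consistent with the non-decreasing direction), so do not be troubled if your correct computations come out with the signs reversed.
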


\begin{lemma}\label{lemma2_10}
For each $\bar{\alpha}>0$ there exist $x', x''\in \mathcal{L}^{\delta}_{\bar{\alpha},\bar{\beta}}$ such that
\[\lim\limits_{\alpha\rightarrow {\bar{\alpha}}^{-}}\left(\sup_{x_{\alpha,\beta}^{\delta}\in \mathcal{L}^{\delta}_{\alpha,\beta}}F(x_{\alpha,\beta}^{\delta})\right)=F(x')=\inf\limits_{x\in \mathcal{L}^{\delta}_{\bar{\alpha},\bar{\beta}}}F(x)\quad {\rm and} \quad \lim\limits_{\alpha\rightarrow \bar{\alpha}^{+}}\left(\inf_{x_{\alpha,\beta}^{\delta}\in \mathcal{L}^{\delta}_{\alpha,\beta}}F(x_{\alpha,\beta}^{\delta})\right)=F(x'')=\sup\limits_{x\in \mathcal{L}^{\delta}_{\bar{\alpha},\bar{\beta}}}F(x). \]
\end{lemma}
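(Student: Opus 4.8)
\textbf{Proof proposal for Lemma \ref{lemma2_10}.}

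The plan is to adapt the classical argument for the continuity properties of the discrepancy under Tikhonov regularization (as in \cite[Section 2.6]{TA1977}), using Lemma \ref{lemmaadd} together with the coercivity, weak lower semi-continuity and Radon--Riesz properties of $\mathcal{R}_{\alpha,\beta}$ recorded in Lemma \ref{lemma2_711}. I will first establish the left-hand limit, i.e. produce $x'\in\mathcal{L}^{\delta}_{\bar{\alpha},\bar{\beta}}$ with $\lim_{\alpha\to\bar{\alpha}^-}\big(\sup_{x^{\delta}_{\alpha,\beta}}F(x^{\delta}_{\alpha,\beta})\big)=F(x')=\inf_{x\in\mathcal{L}^{\delta}_{\bar{\alpha},\bar{\beta}}}F(x)$; the right-hand statement then follows by an entirely symmetric argument, replacing $F$ by $\mathcal{R}_{\eta}$-monotonicity and $\sup$ by $\inf$.

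First I would fix an increasing sequence $\alpha_n\uparrow\bar{\alpha}$ and, for each $n$, pick $x^{\delta}_{\alpha_n,\beta_n}\in\mathcal{L}^{\delta}_{\alpha_n,\beta_n}$ that (nearly) attains $\sup_{x^{\delta}_{\alpha_n,\beta_n}}F$. By Lemma \ref{lemmaadd} the sequence $F(x^{\delta}_{\alpha_n,\beta_n})$ is monotone and bounded (it is squeezed by the corresponding quantities at $\bar\alpha$), hence convergent; call the limit $L$. Next I would show the sequence $\{x^{\delta}_{\alpha_n,\beta_n}\}$ is bounded in $\ell_2$: since $m(\alpha_n)=\mathcal{J}^{\delta}_{\alpha_n,\beta_n}(x^{\delta}_{\alpha_n,\beta_n})$ is bounded (Lemma \ref{lemmaadd} gives $m$ non-increasing and continuous, so $m(\alpha_n)\le m(\alpha_1)$), we get $\mathcal{R}_{\alpha_n,\beta_n}(x^{\delta}_{\alpha_n,\beta_n})\le m(\alpha_1)$, and because $\alpha_n\ge\alpha_1>0$ with $\eta$ fixed this controls $\mathcal{R}_{\alpha_1,\beta_1}$, so coercivity (Lemma \ref{lemma2_711}(i)) forces boundedness. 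Passing to a weakly convergent subsequence $x^{\delta}_{\alpha_{n_k},\beta_{n_k}}\rightharpoonup x'$, weak lower semicontinuity of $\mathcal{R}$ and of $F$ (the latter from continuity of $A$ and weak lower semicontinuity of $\|\cdot\|_Y$) combined with the minimality inequality $\mathcal{J}^{\delta}_{\alpha_{n_k},\beta_{n_k}}(x^{\delta}_{\alpha_{n_k},\beta_{n_k}})\le\mathcal{J}^{\delta}_{\alpha_{n_k},\beta_{n_k}}(x)$ for all $x$, after letting $k\to\infty$ with $\alpha_{n_k}\to\bar\alpha$, show $x'\in\mathcal{L}^{\delta}_{\bar{\alpha},\bar{\beta}}$ and $F(x')\le L$.

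It remains to show $F(x')=L=\inf_{x\in\mathcal{L}^{\delta}_{\bar{\alpha},\bar{\beta}}}F(x)$. For the lower bound $L\le\inf_{x\in\mathcal{L}^{\delta}_{\bar{\alpha},\bar{\beta}}}F(x)$: take any minimizer $\bar x\in\mathcal{L}^{\delta}_{\bar{\alpha},\bar{\beta}}$; comparing $\mathcal{J}^{\delta}_{\alpha_n,\beta_n}(x^{\delta}_{\alpha_n,\beta_n})\le\mathcal{J}^{\delta}_{\alpha_n,\beta_n}(\bar x)$ and rearranging the $\alpha_n\,\mathcal{R}_\eta$ terms (using $\alpha_n<\bar\alpha$ and the monotonicity direction from Lemma \ref{lemmaadd} for $\mathcal{R}_\eta$), then passing to the limit, yields $L\le F(\bar x)$. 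Combined with $F(x')\le L$ and $x'\in\mathcal{L}^{\delta}_{\bar{\alpha},\bar{\beta}}$ this pins down $F(x')=L=\inf_{x\in\mathcal{L}^{\delta}_{\bar{\alpha},\bar{\beta}}}F(x)$. The right-hand identity is proved the same way: take $\alpha_n\downarrow\bar\alpha$, use that $\inf_{x^{\delta}_{\alpha_n,\beta_n}}F$ is now monotone from above and bounded below by $\sup_{x\in\mathcal{L}^{\delta}_{\bar\alpha,\bar\beta}}F$, extract a weak limit $x''$, and reverse the roles of $\le$/$\ge$ in the comparison inequalities.

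The main obstacle I anticipate is not the limit extraction itself but the careful bookkeeping when passing to the limit in inequalities where the regularization parameter is also moving: one must keep $\alpha_n$ and $\beta_n=\alpha_n\eta$ coupled, split $\mathcal{J}^{\delta}_{\alpha_n,\beta_n}=F+\alpha_n\mathcal{R}_\eta$, and verify that the $\alpha_n$-dependent penalty terms converge to the $\bar\alpha$-penalty (using boundedness of $\mathcal{R}_\eta(x^{\delta}_{\alpha_n,\beta_n})$ so that $(\alpha_n-\bar\alpha)\mathcal{R}_\eta(\cdot)\to0$), while simultaneously using weak lower semicontinuity in the ``right'' direction for each term. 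The Radon--Riesz property of $\mathcal{R}_{\alpha,\beta}$ is the tool that upgrades weak convergence of the extracted subsequence to norm convergence, which is what makes the equalities --- rather than mere inequalities --- hold; I would invoke it once the penalty values are shown to converge along the subsequence.
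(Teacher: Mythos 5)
The paper offers no proof of Lemma \ref{lemma2_10} at all: it merely remarks that, because $\mathcal{R}_{\eta}$ is weakly lower semi-continuous, "the proofs are similar to that in \cite[Section 2.6]{TA1977}", and your sketch is a faithful reconstruction of precisely that intended classical route (monotone sandwich from Lemma \ref{lemmaadd}, coercivity of $\mathcal{R}_{\alpha,\beta}$ for boundedness, weak subsequential limits, weak lower semi-continuity combined with the minimality comparison inequalities), so in substance you are doing what the paper does. One caution on your claim that the right-hand limit is "entirely symmetric": there weak lower semi-continuity of $F$ gives $F(x'')\le L'$, which points the \emph{same} way as $F(x'')\le\sup_{x\in\mathcal{L}^{\delta}_{\bar\alpha,\bar\beta}}F(x)\le L'$, so the chain does not close by mere sign-reversal --- you must additionally test the minimality of $x^{\delta}_{\alpha_{n_k},\beta_{n_k}}$ against the weak limit $x''$ itself and use weak lower semi-continuity of $\mathcal{R}_{\eta}$ to obtain $L'\le F(x'')$; also note that the Radon--Riesz property, while useful for strong convergence of the minimizers, is not actually needed for the stated equalities of $F$-values.
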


In the following we provide an existence result on the regularization parameter $\alpha$. The proof is along the line of Morozov's discrepancy principle for nonlinear ill-posed problems (\cite{AR10,R02}).

\begin{lemma}\label{lemma2_11}
Assume $0<c_2\delta<\|y^{\delta}\|_Y$.  Then there exist $\alpha_1, \alpha_2\in \mathbb{R}^+$ such that
\[\sup_{x_{\alpha_1,\beta_1}^{\delta}\in \mathcal{L}^{\delta}_{\alpha_1,\beta_1}}F(x_{\alpha_1,\beta_1}^{\delta})<\tau_1\delta\leq\tau_2\delta<\inf_{x_{\alpha_2,\beta_2}^{\delta}\in \mathcal{L}^{\delta}_{\alpha_2,\beta_2}}F(x_{\alpha_2,\beta_2}^{\delta}).\]
\end{lemma}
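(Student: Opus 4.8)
I would follow the classical argument for Morozov's principle: produce $\alpha_1$ by testing the functional $\mathcal{J}_{\alpha,\beta}^{\delta}$ against an exact solution of $Ax=y$, and produce $\alpha_2$ by testing it against $x=0$. (Consistently with \eqref{equ2_2} I read the quantity compared with $\tau_i\delta$ as the discrepancy $\|Ax_{\alpha,\beta}^{\delta}-y^{\delta}\|_Y$, take $c_2\ge\tau_2$, use the standing hypothesis $\alpha>\beta$, i.e.\ $\eta<1$, and assume $\delta>0$.)

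\emph{Producing $\alpha_2$.} Fix an arbitrary minimizer $x_{\alpha,\beta}^{\delta}\in\mathcal{L}^{\delta}_{\alpha,\beta}$ and compare $\mathcal{J}_{\alpha,\beta}^{\delta}$ at $x_{\alpha,\beta}^{\delta}$ and at $0$:
\[ \alpha\mathcal{R}_{\eta}(x_{\alpha,\beta}^{\delta})\le\mathcal{J}_{\alpha,\beta}^{\delta}(x_{\alpha,\beta}^{\delta})\le\mathcal{J}_{\alpha,\beta}^{\delta}(0)=\tfrac12\|y^{\delta}\|_Y^{2}. \]
Since $\eta<1$ gives $\mathcal{R}_{\eta}(x)=\|x\|_{\ell_1}-\eta\|x\|_{\ell_2}\ge(1-\eta)\|x\|_{\ell_2}$, this yields $\|x_{\alpha,\beta}^{\delta}\|_{\ell_2}\le\|y^{\delta}\|_Y^{2}/(2(1-\eta)\alpha)$ uniformly over minimizers, hence $\|Ax_{\alpha,\beta}^{\delta}-y^{\delta}\|_Y\ge\|y^{\delta}\|_Y-\|A\|\,\|y^{\delta}\|_Y^{2}/(2(1-\eta)\alpha)$, which tends to $\|y^{\delta}\|_Y$ as $\alpha\to\infty$. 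Because $\|y^{\delta}\|_Y>c_2\delta\ge\tau_2\delta$, choosing $\alpha_2$ large enough pushes this lower bound above $\tau_2\delta$, and uniformity then gives $\inf_{x_{\alpha_2,\beta_2}^{\delta}\in\mathcal{L}^{\delta}_{\alpha_2,\beta_2}}\|Ax_{\alpha_2,\beta_2}^{\delta}-y^{\delta}\|_Y>\tau_2\delta$.

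\emph{Producing $\alpha_1$.} The problem setting provides a sparse solution of $Ax=y$; let $x^{\dagger}$ be an $\mathcal{R}_{\eta}$-minimum solution (Definition \ref{def2_1}), whose existence follows from the direct method using the coercivity and weak lower semi-continuity in Lemma \ref{lemma2_711} (cf.\ \cite{DH19}); in particular $Ax^{\dagger}=y$ and $\mathcal{R}_{\eta}(x^{\dagger})<\infty$. Testing $\mathcal{J}_{\alpha,\beta}^{\delta}$ against $x^{\dagger}$ and using $\|y-y^{\delta}\|_Y\le\delta$,
\[ \tfrac12\|Ax_{\alpha,\beta}^{\delta}-y^{\delta}\|_Y^{2}\le\mathcal{J}_{\alpha,\beta}^{\delta}(x^{\dagger})=\tfrac12\|y-y^{\delta}\|_Y^{2}+\alpha\mathcal{R}_{\eta}(x^{\dagger})\le\tfrac12\delta^{2}+\alpha\mathcal{R}_{\eta}(x^{\dagger}), \]
so $\|Ax_{\alpha,\beta}^{\delta}-y^{\delta}\|_Y\le(\delta^{2}+2\alpha\mathcal{R}_{\eta}(x^{\dagger}))^{1/2}$ for every minimizer. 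As $\tau_1>1$, any $\alpha_1<(\tau_1^{2}-1)\delta^{2}/(2\mathcal{R}_{\eta}(x^{\dagger}))$ (or any $\alpha_1>0$ if $\mathcal{R}_{\eta}(x^{\dagger})=0$) makes the right-hand side strictly below $\tau_1\delta$, i.e.\ $\sup_{x_{\alpha_1,\beta_1}^{\delta}\in\mathcal{L}^{\delta}_{\alpha_1,\beta_1}}\|Ax_{\alpha_1,\beta_1}^{\delta}-y^{\delta}\|_Y<\tau_1\delta$. Combining with $\tau_1\delta\le\tau_2\delta$ gives the asserted chain.

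\emph{Anticipated main obstacle.} The only delicate point is the uniformity over the solution sets $\mathcal{L}^{\delta}_{\alpha,\beta}$, which may fail to be singletons; this is why I state each estimate for an arbitrary minimizer, and it works because neither trial element ($0$, $x^{\dagger}$) depends on the minimizer, so the competitor bounds control the full $\sup$ and $\inf$. Secondarily, the $\alpha_2$-step genuinely relies on coercivity of $\mathcal{R}_{\eta}$ (hence on $\alpha>\beta$), and does not survive $\eta=1$ in this form. If ``$F$'' in the statement is instead meant literally as $\tfrac12\|Ax-y^{\delta}\|_Y^{2}$ of Definition \ref{def2_8}, the identical computations apply with $\tau_i\delta$ replaced by $\tfrac12\tau_i^{2}\delta^{2}$.
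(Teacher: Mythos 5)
Your proposal is correct, and the $\alpha_1$ half coincides with the paper's argument: both test $\mathcal{J}_{\alpha,\beta}^{\delta}$ against an $\mathcal{R}_{\eta}$-minimum solution $x^{\dagger}$ to get $\|Ax_{\alpha,\beta}^{\delta}-y^{\delta}\|_Y^2\le\delta^2+2\alpha\mathcal{R}_{\eta}(x^{\dagger})$ and send $\alpha\to0$ (you merely add an explicit threshold for $\alpha_1$). The $\alpha_2$ half, however, takes a genuinely different route. The paper derives $\mathcal{R}_{\eta}(x_n)\le\alpha_n^{-1}m(\alpha_n)\to0$ along a sequence $\alpha_n\to\infty$, extracts a weakly convergent subsequence, identifies the weak limit as $0$ via weak lower semicontinuity, upgrades to strong convergence $x_n\to0$ via the Radon--Riesz property of Lemma \ref{lemma2_711}\,(iii), and then uses continuity of $A$ to push the discrepancy toward $\|y^{\delta}\|_Y>c_2\delta$. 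You instead convert coercivity into the quantitative bound $\|x_{\alpha,\beta}^{\delta}\|_{\ell_2}\le\|y^{\delta}\|_Y^2/(2(1-\eta)\alpha)$ via $\mathcal{R}_{\eta}(x)\ge(1-\eta)\|x\|_{\ell_2}$ and conclude directly. Your version is more elementary (no compactness, no Radon--Riesz), it is immediately uniform over the possibly non-singleton solution sets $\mathcal{L}^{\delta}_{\alpha,\beta}$ (the paper's subsequence argument controls only the selected minimizers and strictly speaking needs a small contradiction argument to reach the stated $\inf$), and it works verbatim in the infinite-dimensional $\ell_2$ setting; what the paper's softer argument buys is robustness to situations where no quantitative lower bound on $\mathcal{R}_{\eta}$ in terms of $\|\cdot\|_{\ell_2}$ is available, only the abstract properties of Lemma \ref{lemma2_711}. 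Both arguments fail at $\eta=1$ for the same reason (loss of coercivity), and your explicit reading of the constants ($c_2\ge\tau_2$, and $F$ interpreted as the discrepancy so that comparison with $\tau_i\delta$ is dimensionally consistent) matches what the paper implicitly intends.
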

\begin{proof}
First, let $\alpha_n\rightarrow 0$ and consider a sequence of corresponding minimizers $x_n:=x_{\alpha_n, \beta_n}^{\delta}\in \mathcal{L}^{\delta_n}_{\alpha_n,\beta_n}$. By the definition of $x_{\alpha,\beta}^{\delta}$
and $x^{\dag}$, we have
\[F(x_n)^q\leq m(\alpha_n)\leq \mathcal{J}_{\alpha_n}(x^{\dag})\leq\delta^{q}+\alpha_n \mathcal{R}_{\eta}(x^{\dag})\rightarrow \delta^q<\tau_1^q\delta^q.\]
This implies that there exists a small enough $\alpha_1$ such that $\sup_{x_{\alpha_1,\beta_1}^{\delta}\in \mathcal{L}^{\delta}_{\alpha_1,\beta_1}}F(x_{\alpha_1,\beta_1}^{\delta})<\tau_1\delta$.

Next, let $\alpha_n\rightarrow \infty$.  Then
\begin{equation}\label{equ2_3}
\mathcal{R}_{\eta}(x_n)\leq \frac{1}{\alpha_n}m(\alpha_n)\leq \frac{1}{\alpha_n}\|A0-y^{\delta}\|_Y\rightarrow 0.
\end{equation}
From the definition of $\mathcal{R}_{\eta}(x)$,
\begin{equation}\label{equ2_4}
\mathcal{R}_{\eta}(x)=(1-\eta)\,\|x\|_{\ell_1}+\eta\,(\|x\|_{\ell_1}-\|x\|_{\ell_2}).
\end{equation}
Then a combination of \eqref{equ2_3} and \eqref{equ2_4} implies that $\{\|x_n\|_{\ell_2}\}$ is bounded. Consequently, $\{x_n\}$ has a convergent subsequence, again denoted by $\{x_n\}$, such that $x_n\rightharpoonup x^*$ for some $x^*\in\ell_2$.
By Lemma \ref{lemma2_711} (ii), it follows from \eqref{equ2_3} that
\[  0\leq \mathcal{R}_{\eta}(x^*)\leq \liminf \mathcal{R}_{\eta}(x_n)=\lim \mathcal{R}_{\eta}(x_n)=0.\]
By \eqref{equ2_4}, this implies $x^*=0$. Since $x_n\rightharpoonup 0$ and $\mathcal{R}_{\eta}(x_n)\rightarrow \mathcal{R}_{\eta}(0)$, Lemma \ref{lemma2_711} (iii) implies that $x_n\rightarrow 0$. Then
\[\|Ax_n-y^{\delta}\|_Y\rightarrow \|A0-y^{\delta}\|_Y=\|y^{\delta}\|_Y>c_2\delta.\]
This implies that there exists a large enough $\alpha_2$ such that $\inf_{x_{\alpha_2,\beta_2}^{\delta}\in \mathcal{L}^{\delta}_{\alpha_2,\beta_2}}F(x_{\alpha_2,\beta_2}^{\delta})>\tau_2\delta$.
\end{proof}

Note that we require $\|y^{\delta}\|_Y>c_2\delta$ in Lemma \ref{lemma2_11}, which is a reasonable assumption. Indeed, in applications, it is almost impossible to recover a solution from observed data of a size in the same order as the noise.

We state an existence result on the regularized parameter, similar to Theorems 3.10 in \cite{AR10}.
The proof makes use of the properties stated in Lemmas \ref{lemma2_10} and \ref{lemma2_11}.

\begin{theorem}\label{theorem2_12}
Assume $\|y^{\delta}\|_Y>c_2\delta>0$ and there is no $\alpha>0$ with minimizers $x', x''\in \mathcal{L}^{\delta}_{\alpha,\beta}$ such that
\[\|Ax'-y^{\delta}\|_Y<\tau_1\delta\leq \tau_2\delta<\|Ax''-y^{\delta}\|_Y.\]
Then there exist $\alpha=\alpha(\delta, y^{\delta})>0$ and $x_{\alpha,\beta}^{\delta}\in \mathcal{L}^{\delta}_{\alpha,\beta}$
such that \eqref{equ2_2} holds.
\end{theorem}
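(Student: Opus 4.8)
The plan is to combine the monotonicity and one‑sided continuity information from Lemmas~\ref{lemmaadd} and~\ref{lemma2_10} with the two extreme‑parameter bounds from Lemma~\ref{lemma2_11}, and then argue by a ``bisection/supremum'' type argument on the set of parameters where the discrepancy already exceeds $\tau_2\delta$. Concretely, define
\[
M(\alpha):=\sup_{x_{\alpha,\beta}^{\delta}\in\mathcal L^{\delta}_{\alpha,\beta}}F(x_{\alpha,\beta}^{\delta}),\qquad
m_-(\alpha):=\inf_{x_{\alpha,\beta}^{\delta}\in\mathcal L^{\delta}_{\alpha,\beta}}F(x_{\alpha,\beta}^{\delta}),
\]
so that $M$ and $m_-$ are both non‑increasing in $\alpha$ by Lemma~\ref{lemmaadd}, and $m_-(\alpha)\le M(\alpha)$ trivially. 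By Lemma~\ref{lemma2_11} there are $\alpha_1<\alpha_2$ with $M(\alpha_1)<\tau_1\delta$ and $m_-(\alpha_2)>\tau_2\delta$ (after passing from $\frac12\|\cdot\|_Y^2$ to $\|\cdot\|_Y$, which only rescales the thresholds harmlessly — I would state the principle directly in terms of $F$ or adjust $\tau_i$ accordingly).

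First I would set $\bar\alpha:=\inf\{\alpha>0: m_-(\alpha)\le\tau_2\delta\}$; this set is nonempty (it contains a neighbourhood of $\alpha_1$, where even $M$, hence $m_-$, is below $\tau_1\delta\le\tau_2\delta$) and bounded below by a positive number, since $m_-(\alpha)>\tau_2\delta$ for all $\alpha\le\alpha_2$... — more precisely I would instead take $\bar\alpha:=\sup\{\alpha>0: m_-(\alpha)\ge\tau_1\delta\}$, which is finite because $m_-(\alpha_2)>\tau_2\delta\ge\tau_1\delta$ forces $\bar\alpha\le\alpha_2$, and positive by the $\alpha_1$ bound together with monotonicity. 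Then I would use Lemma~\ref{lemma2_10} at this $\bar\alpha$: letting $\alpha\to\bar\alpha^-$ gives a minimizer $x'\in\mathcal L^{\delta}_{\bar\alpha,\bar\beta}$ with $F(x')=\lim_{\alpha\to\bar\alpha^-}M(\alpha)=\inf_{x\in\mathcal L^{\delta}_{\bar\alpha,\bar\beta}}F(x)$, and letting $\alpha\to\bar\alpha^+$ gives $x''\in\mathcal L^{\delta}_{\bar\alpha,\bar\beta}$ with $F(x'')=\lim_{\alpha\to\bar\alpha^+}m_-(\alpha)=\sup_{x\in\mathcal L^{\delta}_{\bar\alpha,\bar\beta}}F(x)$. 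By the definition of $\bar\alpha$ and monotonicity, $F(x')\le(\tfrac12)(\tau_1\delta)^2$‑type lower side and $F(x'')\ge(\tfrac12)(\tau_1\delta)^2$‑type upper side; that is, writing things back in terms of the residual norm, $\|Ax'-y^\delta\|_Y\le\tau_1\delta$ and $\|Ax''-y^\delta\|_Y\ge\tau_1\delta$ — and symmetrically, approaching from the other defining set, $\|Ax'-y^\delta\|_Y\le\tau_2\delta$. Combining the two one‑sided limits at $\bar\alpha$ yields
\[
\inf_{x\in\mathcal L^{\delta}_{\bar\alpha,\bar\beta}}\|Ax-y^\delta\|_Y\le\tau_1\delta\le\tau_2\delta\le\sup_{x\in\mathcal L^{\delta}_{\bar\alpha,\bar\beta}}\|Ax-y^\delta\|_Y.
\]

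Now the hypothesis of the theorem enters: it rules out the ``jump'' scenario in which \emph{every} minimizer at $\bar\alpha$ has residual either strictly below $\tau_1\delta$ or strictly above $\tau_2\delta$ with nothing in between. Since the residual values realized by $\mathcal L^{\delta}_{\bar\alpha,\bar\beta}$ fill an interval containing endpoints on both sides of $[\tau_1\delta,\tau_2\delta]$ (by the displayed inequality), and the forbidden configuration is exactly ``$\|Ax'-y^\delta\|_Y<\tau_1\delta$ and $\|Ax''-y^\delta\|_Y>\tau_2\delta$,'' the negation of that hypothesis forces at least one minimizer $x_{\bar\alpha,\bar\beta}^{\delta}$ with $\tau_1\delta\le\|Ax_{\bar\alpha,\bar\beta}^{\delta}-y^\delta\|_Y\le\tau_2\delta$. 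Here I would be slightly careful: one cannot use a connectedness/intermediate‑value argument on $\mathcal L^{\delta}_{\bar\alpha,\bar\beta}$ directly (the minimizer set need not be connected in a useful way), so the argument must be the purely logical one — either $\|Ax'-y^\delta\|_Y\ge\tau_1\delta$, in which case $x'$ works (as it also satisfies $\le\tau_2\delta$), or $\|Ax'-y^\delta\|_Y<\tau_1\delta$, in which case the excluded‑jump hypothesis forces $\|Ax''-y^\delta\|_Y\le\tau_2\delta$, and since also $\|Ax''-y^\delta\|_Y\ge\tau_1\delta$, the element $x''$ works. Taking $\alpha:=\bar\alpha$ and the corresponding $x_{\alpha,\beta}^{\delta}\in\{x',x''\}$ then gives \eqref{equ2_2}.

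The main obstacle I anticipate is not any deep estimate but bookkeeping: pinning down the precise one‑sided limits of $M$ and $m_-$ at $\bar\alpha$ from Lemma~\ref{lemma2_10}, making sure the definition of $\bar\alpha$ is the right supremum/infimum so that monotonicity delivers the inequalities on the correct sides, and converting cleanly between $F=\frac12\|\cdot\|_Y^2$ and the residual norm $\|\cdot\|_Y$ used in \eqref{equ2_2} so the constants $\tau_1,\tau_2$ (or auxiliary $c_1,c_2$) line up. Once those are set, the case analysis above closes the proof with no further work.
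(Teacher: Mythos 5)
The paper does not actually write out a proof of Theorem \ref{theorem2_12}: it only remarks that the argument is that of \cite{AR10} (Theorem 3.10 there) built on Lemmas \ref{lemma2_10} and \ref{lemma2_11}, and your proposal is precisely that argument --- locate a critical $\bar\alpha$ via monotonicity of the discrepancy, apply the one-sided limits of Lemma \ref{lemma2_10} at $\bar\alpha$, and invoke the excluded-jump hypothesis in a two-case analysis to pick $x'$ or $x''$. The only loose ends are the bookkeeping items you yourself flag (the $F=\tfrac12\|A\cdot-y^\delta\|_Y^2$ versus $\|A\cdot-y^\delta\|_Y$ normalization, and the sup/inf direction in defining $\bar\alpha$, the latter confusion tracing back to the inequality directions as printed in Lemma \ref{lemmaadd}), so this is essentially the paper's intended proof.
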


Next, we give the convergence of \eqref{equ1_2} under Morozov's discrepancy principle.

\begin{theorem}\label{theorem2_13} {\rm (Convergence)}
Let $\displaystyle x_{\alpha_{n},\beta_{n}}^{\delta_{n}}$ be a minimizer of
$\mathcal{J}_{\alpha_n,\beta_n}^{\delta_n}(x)$ defined by \eqref{equ2_1} with the data $y^{\delta_n}$
satisfying $\| y-y^{\delta_n}\|\leq\delta_n$, where $\delta_n\rightarrow 0$ if $n\rightarrow +\infty$
and $y^{\delta_n}$ belongs to the range of $A$.
Let $\alpha_n$ be determined by Morozov's discrepancy principle,
\[ \tau_1\delta_n\leq \|A(x_{\alpha_{n},\beta_{n}}^{\delta_{n}})-y^{\delta_n}\|_Y\leq \tau_2 \delta_n,
\quad 1<\tau_1\leq\tau_2.\]
Moreover, assume that $\displaystyle\eta=\lim\limits_{n\to\infty}\eta_n\in [0,1)$ exists, where
$\eta_n=\beta_n/\alpha_n$. Then there exists a subsequence of $\{x_{\alpha_{n},\beta_{n}}^{\delta_{n}}\}$, denoted by $\{x_{\alpha_{n_k},\beta_{n_k}}^{\delta_{n_k}}\}$, that
converges to an $\mathcal{R}_{\eta}$-minimizing solution $x^{\dag}$ in $\ell_2$.
If, in addition, the $\mathcal{R}_{\eta}$-minimizing solution $x^{\dag}$ is unique, then
\[\lim\limits_{n \rightarrow +\infty} \| x_{\alpha_{n},\beta_n}^{\delta_{n}}-x^{\dag}\|_{\ell_2}=0.\]
\end{theorem}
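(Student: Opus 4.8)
The plan is to follow the classical regularization-by-discrepancy argument, adapted to the non-convex penalty $\mathcal{R}_\eta$ using the three properties from Lemma \ref{lemma2_711} (coercivity, weak lower semi-continuity, Radon–Riesz). First I would establish a uniform bound on $\mathcal{R}_\eta(x_{\alpha_n,\beta_n}^{\delta_n})$. Since $y^{\delta_n}$ lies in the range of $A$, there is some $\bar x$ with $A\bar x = y^{\delta_n}$ (more carefully, use the $\mathcal{R}_\eta$-minimizing solution $x^\dagger$ of $Ax=y$ as a comparison element); by minimality of $x_{\alpha_n,\beta_n}^{\delta_n}$ and the discrepancy lower bound $\tau_1\delta_n \le \|Ax_{\alpha_n,\beta_n}^{\delta_n}-y^{\delta_n}\|_Y$, combined with the upper estimate
\[
\tfrac12\|Ax_{\alpha_n,\beta_n}^{\delta_n}-y^{\delta_n}\|_Y^2 + \alpha_n\mathcal{R}_\eta(x_{\alpha_n,\beta_n}^{\delta_n}) \le \tfrac12\|Ax^\dagger - y^{\delta_n}\|_Y^2 + \alpha_n\mathcal{R}_\eta(x^\dagger) \le \tfrac12\delta_n^2 + \alpha_n\mathcal{R}_\eta(x^\dagger),
\]
I would deduce $\mathcal{R}_\eta(x_{\alpha_n,\beta_n}^{\delta_n}) \le \mathcal{R}_\eta(x^\dagger) + \tfrac{\delta_n^2}{2\alpha_n}$. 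The discrepancy lower bound forces $\alpha_n$ not to go to zero too fast relative to $\delta_n$; more precisely, from $\tfrac12\tau_1^2\delta_n^2 \le \tfrac12\delta_n^2 + \alpha_n\mathcal{R}_\eta(x^\dagger)$ one gets $\delta_n^2/\alpha_n \le 2\mathcal{R}_\eta(x^\dagger)/(\tau_1^2-1)$, hence $\mathcal{R}_\eta(x_{\alpha_n,\beta_n}^{\delta_n})$ is bounded. By coercivity (Lemma \ref{lemma2_711}(i)) and \eqref{equ2_4}, the sequence $\{x_{\alpha_n,\beta_n}^{\delta_n}\}$ is bounded in $\ell_2$, so it has a weakly convergent subsequence $x_{\alpha_{n_k},\beta_{n_k}}^{\delta_{n_k}}\rightharpoonup x^*$.

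Next I would identify the weak limit. Since $A$ is bounded and linear it is weakly continuous, so $Ax_{\alpha_{n_k},\beta_{n_k}}^{\delta_{n_k}}\rightharpoonup Ax^*$; combining the discrepancy upper bound $\|Ax_{\alpha_{n_k},\beta_{n_k}}^{\delta_{n_k}}-y^{\delta_{n_k}}\|_Y\le\tau_2\delta_{n_k}\to 0$ with $\|y-y^{\delta_{n_k}}\|_Y\le\delta_{n_k}\to 0$ and weak lower semi-continuity of the norm, I get $\|Ax^*-y\|_Y\le\liminf\|Ax_{\alpha_{n_k},\beta_{n_k}}^{\delta_{n_k}}-y^{\delta_{n_k}}\|_Y = 0$, i.e. $Ax^*=y$, so $x^*$ is $A$-feasible. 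Then, using weak lower semi-continuity of $\mathcal{R}_\eta$ (Lemma \ref{lemma2_711}(ii), legitimate since $\{\mathcal{R}_\eta(x_{\alpha_{n_k},\beta_{n_k}}^{\delta_{n_k}})\}$ is bounded) together with the comparison estimate above,
\[
\mathcal{R}_\eta(x^*) \le \liminf_k \mathcal{R}_\eta(x_{\alpha_{n_k},\beta_{n_k}}^{\delta_{n_k}}) \le \limsup_k\Bigl(\mathcal{R}_\eta(x^\dagger) + \tfrac{\delta_{n_k}^2}{2\alpha_{n_k}}\Bigr).
\]
Here I need $\delta_{n_k}^2/\alpha_{n_k}\to 0$: this follows from the discrepancy lower bound, since if $\alpha_{n_k}$ stayed bounded below one argues along the lines of Lemma \ref{lemma2_11} that the discrepancy would not shrink to zero, so necessarily $\alpha_{n_k}\to 0$, and then the refined estimate $\delta_{n_k}^2/\alpha_{n_k}\le C$ must actually be upgraded — I would instead use that from $F(x_{\alpha_{n_k},\beta_{n_k}}^{\delta_{n_k}}) \ge \tfrac12\tau_1^2\delta_{n_k}^2$ and $F(x_{\alpha_{n_k},\beta_{n_k}}^{\delta_{n_k}}) \le \tfrac12\delta_{n_k}^2 + \alpha_{n_k}[\mathcal{R}_\eta(x^\dagger)-\mathcal{R}_\eta(x_{\alpha_{n_k},\beta_{n_k}}^{\delta_{n_k}})]$ and, since $\mathcal{R}_\eta(x^*)\le\liminf\mathcal{R}_\eta(x_{\alpha_{n_k},\beta_{n_k}}^{\delta_{n_k}})$ will give the reverse inequality in the limit, the bracket is asymptotically nonpositive, forcing $\delta_{n_k}^2/\alpha_{n_k}\to 0$. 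Consequently $\mathcal{R}_\eta(x^*)\le\mathcal{R}_\eta(x^\dagger)$, and since $x^*$ is feasible and $x^\dagger$ is an $\mathcal{R}_\eta$-minimizing solution, $x^*$ is itself an $\mathcal{R}_\eta$-minimizing solution and $\mathcal{R}_\eta(x_{\alpha_{n_k},\beta_{n_k}}^{\delta_{n_k}})\to\mathcal{R}_\eta(x^*)$.

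Finally, strong convergence comes from the Radon–Riesz property: $x_{\alpha_{n_k},\beta_{n_k}}^{\delta_{n_k}}\rightharpoonup x^*$ together with $\mathcal{R}_\eta(x_{\alpha_{n_k},\beta_{n_k}}^{\delta_{n_k}})\to\mathcal{R}_\eta(x^*)$ yields $\|x_{\alpha_{n_k},\beta_{n_k}}^{\delta_{n_k}}-x^*\|_{\ell_2}\to 0$ by Lemma \ref{lemma2_711}(iii); set $x^\dagger:=x^*$. For the last assertion, a standard subsequence-of-subsequence argument applies: if convergence of the full sequence failed, some subsequence stays bounded away from the unique $\mathcal{R}_\eta$-minimizing solution, but the argument above extracts from it a further subsequence converging to an $\mathcal{R}_\eta$-minimizing solution, which by uniqueness equals $x^\dagger$ — a contradiction. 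The main obstacle I anticipate is the careful handling of the parameter ratio $\delta_{n_k}^2/\alpha_{n_k}$: unlike the a-priori choice $\delta^2=O(\alpha)$, here $\alpha_n$ is only implicitly defined through the discrepancy principle, so one must extract $\delta_{n_k}^2/\alpha_{n_k}\to 0$ self-consistently from the discrepancy bounds and the limiting (in)equality for $\mathcal{R}_\eta$, rather than assuming it; the role of the hypothesis $\eta<1$ is to keep \eqref{equ2_4} effective so that boundedness of $\mathcal{R}_\eta$ genuinely controls $\|x\|_{\ell_2}$.
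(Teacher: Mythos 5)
Your proposal is correct and follows essentially the same route as the paper: comparison with $x^{\dagger}$, the discrepancy lower bound to control the penalty, weak compactness plus weak lower semicontinuity of the norm and of $\mathcal{R}_{\eta}$ to identify the weak limit as an $\mathcal{R}_{\eta}$-minimizing solution, the Radon--Riesz property for strong convergence, and a subsequence-of-subsequence argument under uniqueness. The only real difference is that your detour through the ratio $\delta_n^2/\alpha_n$ is unnecessary: the paper combines $\tau_1\delta_n\le\|Ax_n-y^{\delta_n}\|_Y$ with the minimality inequality to cancel the data-fidelity terms outright, yielding $\alpha_n\|x_n\|_{\ell_1}-\beta_n\|x_n\|_{\ell_2}\le\alpha_n\|x^{\dagger}\|_{\ell_1}-\beta_n\|x^{\dagger}\|_{\ell_2}$ directly (indeed your own two displayed bounds already show the bracket $\mathcal{R}_{\eta}(x^{\dagger})-\mathcal{R}_{\eta}(x_n)$ is nonnegative for every $n$, not merely asymptotically, so no self-consistent limiting argument is needed; one should also write $\mathcal{R}_{\eta_n}$ rather than $\mathcal{R}_{\eta}$ along the sequence and use boundedness of $\|x_n\|_{\ell_2}$ together with $\eta_n\to\eta$ to pass to the limit, as in the paper's estimates (2.6) and (2.8)).
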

\begin{proof}
Denote $y_{n}:=y^{\delta_{n}}$, $x_{n}:=x_{\alpha_{n},\beta_{n}}^{\delta_{n}}$, $\eta_{n}:=\eta^{\delta_{n}}$.
By the definition of $x_{n}$, we obtain
\begin{align}
\displaystyle \frac{1}{q}\| Ax_{n}-y_{n}\|_Y^{q} +\alpha_{n}\| x_{n}\|_{\ell_1}-\beta_{n}\| x_{n}\|_{\ell_2}
&\leq\frac{1}{q}\|Ax^{\dag}-y_n\|_Y^q+\alpha_{n}\|x^{\dag}\|_{\ell_1}-\beta_{n}\|x^{\dag}\|_{\ell_2}\nonumber\\
&\displaystyle\leq\frac{1}{q}\delta_n^q+\alpha_n\|x^{\dag}\|_{\ell_1}-\beta_{n}\|x^{\dag}\|_{\ell_2}.
\label{equ2_5}
\end{align}
Since $\tau_1\delta_n\leq \|Ax_{n}-y_n\|_Y$, it follows from \eqref{equ2_5} that
\[\alpha_{n}\| x_{n}\|_{\ell_1}-\beta_{n}\| x_{n}\|_{\ell_2}\leq \alpha_{n}\|x^{\dag}\|_{\ell_1}-\beta_{n}\|x^{\dag}\|_{\ell_2}.\]
Then we have
\begin{equation}\label{equ2_6}
\displaystyle \limsup\limits_{n \to +\infty}\left(\| x_n \|_{\ell_1}-\eta_n\| x_n \|_{\ell_2}\right)
\leq\| x^{\dag} \|_{\ell_1}-\eta\| x^{\dag} \|_{\ell_2}.
 \end{equation}
Since $\| x_n \|_{\ell_2}$ is bounded, there exist an $x^{*}\in\ell_2$
and a subsequence of $\{x_{n_k}\}$ such that $x_{n_k}\rightharpoonup x^{*}$ in $\ell_2$.
By Morozov's discrepancy principle, we obtain
\[\| Ax_{n_k}-y\|_Y\leq\| Ax_{n_k}-y_{n_k}\|_Y+\| y-y_{n_k}\|_Y\leq (\tau_2+1)\delta_{n_k}.\]
Therefore, weak lower semicontinuity of the norm gives
\begin{equation}\label{equ2_7}
\|Ax^*-y\|\leq \liminf\limits_{k\rightarrow \infty}\|Ax_{n_k}-y\|_Y=0.
\end{equation}
Meanwhile, by \eqref{equ2_6} and Lemma \ref{lemma2_711} (ii), we have
\begin{align}\label{equ2_8}
\displaystyle \| x^{*} \|_{\ell_1}-\eta\| x^{*} \|_{\ell_2}
&\displaystyle\leq\liminf\limits_{k}(\|x_{n_k} \|_{\ell_1}-\eta_{n_k}\| x_{n_k} \|_{\ell_2})\leq\limsup\limits_{k}(\|x_{n_k} \|_{\ell_1}-\eta_{n_k}\| x_{n_k} \|_{\ell_2})\nonumber\\
&\displaystyle\leq\limsup\limits_{n}(\|x_{n} \|_{\ell_1}-\eta_{n}\| x_{n} \|_{\ell_2})\leq\| x^{\dag} \|_{\ell_1}-\eta\| x^{\dag} \|_{\ell_2}.
 \end{align}
By the definition of $x^{\dag}$, a combination of \eqref{equ2_7} and \eqref{equ2_8} implies that $x^{*}$ is an $\mathcal{R}_{\eta}$-minimizing solution.
 Hence, $\lim\limits_{k\rightarrow \infty}\mathcal{R}_{\eta}(x_{n_k})\rightarrow \mathcal{R}_{\eta}(x^*)$. By Lemma \ref{lemma2_711} (iii), we have $x_{n_k}\rightarrow x^*$. If the $\mathcal{R}_{\eta}$-minimizing solution is unique, then $x^{*}=x^{\dag}$. This implies that, for every subsequence $\{x_{n_k}\}$, $x_{n_k}$ converges to $x^{\dag}$, then we have $\lim\limits_{n \rightarrow +\infty} \| x_{n}-x^{\dag} \|_{\ell_2}=0$.
\end{proof}

The numerical experiments in \cite{DH19} show that
we can obtain satisfactory results even when $\alpha=\beta$. Indeed, $\mathcal{R}_{\alpha,\beta}(x)$
behaves more and more like the $\ell_0$-norm as $\beta/\alpha\rightarrow 1$. Nevertheless, note that if $\alpha=\beta$, $\mathcal{R}_{\alpha,\alpha}(x)$ fails to satisfy the coercivity and the Radon-Riesz property, and we can not ensure the convergence in $\ell_2$-norm.
Without the Radon-Riesz property, we may expect to have only weak convergence for the regularized solution. If we assume the operator
$A$ is coercive in $\ell_2$, i.e.\ $\|x\|_{\ell_2}\rightarrow \infty$ implies $\|Ax\|_Y\rightarrow \infty$, then the proof of the weak convergence is similar to that of Theorem \ref{theorem2_13}.

%ssssssssssssssssssssssssssssssssssssssssssssssssssssssssssssssssssssssssssssssssssssssssssssssssssssssssssssssssssssssssssssssssssssssss4

%sssssssssssssssssssssssssssssssssssssssssssssssssssssssssssssssssssssssssssssssssssssssssssssssssssssssssssssssssssssssssssssssssssssssssssssss5

\section{The projected gradient method via the surrogate function approach}\label{sec5}

In this section, we propose another projected gradient algorithm for \eqref{equ1_2} in the finite dimensional space $\mathbb{R}^n$
based on the surrogate function approach. By the discussion in Subsection \ref{ssec1_2}, we consider the optimization problem \eqref{equ1_8}.
The following result provides a first order optimality condition for \eqref{equ1_8}.

\begin{lemma}\label{lemma3_2}
Let $0\not=\hat{w}\in \mathbb{R}^n$ be a minimizer of \eqref{equ1_8}. Then
\begin{equation}\label{equ3_2}
\mathbb{P}_{R}\left(\hat{w}+\frac{\beta \hat{w}}{\lambda\|\hat{w}\|_{\ell_2}}-\frac{1}{\lambda} A^{*}(A\hat{w}-y^{\delta})\right)=\hat{w}
 \end{equation}
for any $\lambda>0$, equivalently,
\begin{equation}\label{equ3_3}
\left\langle\frac{\beta \hat{w}}{\|\hat{w}\|_{\ell_2}}-A^{*}(A\hat{w}-y^{\delta}), w-\hat{w}\right\rangle\leq 0,
 \end{equation}
for all $w\in B_R$.
\end{lemma}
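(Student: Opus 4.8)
The plan is to mimic the proof of Lemma~\ref{lemma4_1}: reduce the minimization over the convex ball $B_R$ to a first-order variational inequality, and then translate that inequality into the projection identity via the characterization of $\mathbb{P}_R$ in Lemma~\ref{lemma2_7}. The only genuinely new feature compared with Lemma~\ref{lemma4_1} is that $\mathcal{D}_\beta^{\delta}$ is non-convex, due to the concave term $-\beta\|x\|_{\ell_2}$, so we obtain only a \emph{necessary} first-order condition; that is all the statement asserts.

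First I would use the hypothesis $\hat w\neq 0$: the map $x\mapsto\|x\|_{\ell_2}$ is differentiable near $\hat w$ with gradient $\hat w/\|\hat w\|_{\ell_2}$, hence $\mathcal{D}_\beta^{\delta}$ is differentiable at $\hat w$ with
\[ \nabla\mathcal{D}_\beta^{\delta}(\hat w)=A^{*}(A\hat w-y^{\delta})-\frac{\beta\hat w}{\|\hat w\|_{\ell_2}}. \]
Since $\hat w$ minimizes $\mathcal{D}_\beta^{\delta}$ over the convex set $B_R$, for every $w\in B_R$ the scalar function $g(t)=\mathcal{D}_\beta^{\delta}\bigl(\hat w+t(w-\hat w)\bigr)$, $t\in[0,1]$, attains its minimum at $t=0$, so $g'(0+)\geq 0$. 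Computing $g'(0+)=\langle\nabla\mathcal{D}_\beta^{\delta}(\hat w),\,w-\hat w\rangle$ yields
\[ \left\langle A^{*}(A\hat w-y^{\delta})-\frac{\beta\hat w}{\|\hat w\|_{\ell_2}},\,w-\hat w\right\rangle\geq 0\qquad\forall\,w\in B_R, \]
which is exactly \eqref{equ3_3} after multiplication by $-1$.

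To pass from \eqref{equ3_3} to the projection identity \eqref{equ3_2}, I would set
\[ \hat u=\hat w+\frac{\beta\hat w}{\lambda\|\hat w\|_{\ell_2}}-\frac{1}{\lambda}A^{*}(A\hat w-y^{\delta}), \]
so that $\hat u-\hat w=\tfrac{1}{\lambda}\bigl(\tfrac{\beta\hat w}{\|\hat w\|_{\ell_2}}-A^{*}(A\hat w-y^{\delta})\bigr)$. Because $\hat w\in B_R$ and $\lambda>0$, the inequality \eqref{equ3_3} is equivalent to $\langle w-\hat w,\,\hat u-\hat w\rangle\leq 0$ for all $w\in B_R$, and by the characterization of $\mathbb{P}_R$ in Lemma~\ref{lemma2_7} this says precisely $\hat w=\mathbb{P}_R(\hat u)$, i.e.\ \eqref{equ3_2}; the same chain of equivalences run backwards gives the converse, so \eqref{equ3_2} and \eqref{equ3_3} are equivalent for every $\lambda>0$.

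The only point requiring care is the non-convexity of $\mathcal{D}_\beta^{\delta}$: one cannot appeal to the quadratic/convex one-sided-derivative argument used in Lemma~\ref{lemma4_1}, but differentiability of $\mathcal{D}_\beta^{\delta}$ at the point $\hat w\neq 0$ together with convexity of $B_R$ still makes $g'(0+)\geq 0$ a valid necessary condition. The assumption $\hat w\neq 0$ is indispensable here, as it is exactly what guarantees that the term $-\beta\|\cdot\|_{\ell_2}$ is differentiable at $\hat w$.
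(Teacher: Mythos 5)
Your proposal is correct and follows essentially the same route as the paper: restrict $\mathcal{D}_\beta^{\delta}$ to the segment from $\hat w$ to $w$, use that the one-sided derivative at $t=0$ is nonnegative at a constrained minimizer (valid despite non-convexity, exactly as you note), and read off \eqref{equ3_3}. The only difference is that you also spell out the equivalence of \eqref{equ3_3} with the projection identity \eqref{equ3_2} via Lemma \ref{lemma2_7}, a step the paper's proof leaves implicit; your handling of it is correct.
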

\begin{proof}
By the definition of $\hat{w}$, for any $w\in B_R$, the function
\[ f(t)=\frac{1}{2}\| A((1-t)\hat{w}+tw)-y^{\delta}\|_{\ell_2}^{2}-\beta\| (1-t)\hat{w}+tw\|_{\ell_2},\quad t\in [0,1] \]
has its minimum at $t=0$.  Thus,
\[ f^\prime(0+)=\langle A\hat{w}-y^{\delta},A(w-\hat{w})\rangle-\beta\|\hat{w}\|_{\ell_2}^{-1}\langle \hat{w},w-\hat{w}\rangle\ge 0,\]
i.e., \eqref{equ3_3} holds.
\end{proof}

Due to the non-convexity of $\mathcal{D}_{\beta}^{\delta}(x)$, \eqref{equ3_3} is only a necessary condition of \eqref{equ1_8}.

\begin{lemma}\label{lemma3_3}
For any fixed $\beta\geq 0$, define
\begin{equation}\label{equ3_8}
\Phi_{\lambda}(w, x):=\frac{1}{2}\|Aw-y^{\delta}\|_{\ell_2}^2-\beta\|w\|_{\ell_2}-\frac{1}{2}\|A(w-x)\|_{\ell_2}^2+\frac{\lambda}{2}\|w-x\|_{\ell_2}^2,\quad w, x\in B_R.
\end{equation}
Then for any fixed $x\in B_R$, there exists a minimizer $\hat{w}$ of $\Phi_{\lambda}(w, x)$ on $B_R$.
\end{lemma}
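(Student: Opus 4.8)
The plan is to establish existence by the direct method, which in the present finite-dimensional setting reduces to the Weierstrass extreme value theorem. First I would record that the admissible set $B_R=\{w\in\mathbb{R}^n\mid\|w\|_{\ell_1}\le R\}$ is compact: it is nonempty (it contains the origin), it is closed as the preimage of the closed interval $[0,R]$ under the continuous map $w\mapsto\|w\|_{\ell_1}$, and it is bounded since $\|w\|_{\ell_2}\le\|w\|_{\ell_1}\le R$ for every $w\in B_R$; Heine--Borel then gives compactness.

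Next I would verify that, with $x\in B_R$ held fixed, the map $w\mapsto\Phi_{\lambda}(w,x)$ is continuous on $\mathbb{R}^n$. The three quadratic pieces $\tfrac{1}{2}\|Aw-y^{\delta}\|_{\ell_2}^2$, $-\tfrac{1}{2}\|A(w-x)\|_{\ell_2}^2$ and $\tfrac{\lambda}{2}\|w-x\|_{\ell_2}^2$ are polynomials in the coordinates of $w$, hence continuous, and $w\mapsto-\beta\|w\|_{\ell_2}$ is continuous as well, so the finite sum $\Phi_{\lambda}(\cdot,x)$ is continuous. It is convenient to note here that the $\|Aw\|_{\ell_2}^2$ contributions cancel, $\tfrac{1}{2}\|Aw-y^{\delta}\|_{\ell_2}^2-\tfrac{1}{2}\|A(w-x)\|_{\ell_2}^2=\langle Aw,Ax-y^{\delta}\rangle+\tfrac{1}{2}\|y^{\delta}\|_{\ell_2}^2-\tfrac{1}{2}\|Ax\|_{\ell_2}^2$, so that $\Phi_{\lambda}(\cdot,x)$ is an affine function plus the strictly convex quadratic $\tfrac{\lambda}{2}\|w-x\|_{\ell_2}^2$ plus the continuous concave term $-\beta\|w\|_{\ell_2}$; this decomposition will also be useful when analysing the iteration \eqref{equ1_9}. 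Then the conclusion is immediate: a real-valued continuous function on a nonempty compact set attains its infimum, so there exists $\hat{w}\in B_R$ with $\Phi_{\lambda}(\hat{w},x)=\min_{w\in B_R}\Phi_{\lambda}(w,x)$.

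I do not expect a genuine obstacle. The one point deserving a moment's care is that $\Phi_{\lambda}(\cdot,x)$ carries the concave contributions $-\beta\|w\|_{\ell_2}$ and $-\tfrac{1}{2}\|A(w-x)\|_{\ell_2}^2$, so a weak lower semicontinuity argument alone would not close the proof; but since $B_R$ is bounded these terms are bounded on $B_R$, and compactness of $B_R$ in $\mathbb{R}^n$ makes lower semicontinuity unnecessary altogether. (In an infinite-dimensional formulation one would replace compactness of $B_R$ by weak sequential compactness, and the weakly upper-semicontinuous concave terms would then have to be handled via the coercivity and Radon--Riesz properties of Lemma \ref{lemma2_711}; in $\mathbb{R}^n$ this is not needed.) Hence the only work is the bookkeeping in the two steps above, and the statement follows.
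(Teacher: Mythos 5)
Your proof is correct and follows exactly the paper's argument: the paper's entire proof is the one-line observation that $\Phi_{\lambda}(\cdot,x)$ is continuous and $B_R$ is compact in $\mathbb{R}^n$, so a minimizer exists by Weierstrass. Your additional bookkeeping (Heine--Borel, the cancellation of the $\|Aw\|_{\ell_2}^2$ terms, the remark on the infinite-dimensional case) is sound but not needed for the statement as given.
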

\begin{proof}
Being continuous, the function $\Phi_{\lambda}(\cdot, x)$ has a minimum on the compact set $B_R$.
\end{proof}

Note that a minimizer $\hat{w}$ of $\Phi_{\lambda}(w, x)$ depends on $x$ in $\Phi_{\lambda}(w, x)$.  For $w\not=0$, we denote
\[ a_{i,j}(w)=\frac{\partial^2 \|w\|_{\ell_2}}{\partial w_i\partial w_j},\quad 1\le i,j\le n.\]
Then,
\begin{equation}\label{equ3_9}
a_{i,j}(w)= \frac{\delta_{ij}}{\|w\|_{\ell_2}}-\frac{w_i w_j}{\|w\|_{\ell_2}^3},\quad 1\le i,j\le n.
\end{equation}

Since $\|w\|_{\ell_2}$ is convex, the matrix $(a_{i,j}(w))_{n\times n}$ is positive semi-definite. Thus, $ {\rm eig}(w)\geq 0$, where $ {\rm eig} (w)$ denotes the eigenvalues of $(a_{i,j}(w))_{n\times n}$. Moreover, $\max\{{\rm eig} (w)\}$ is an increasing function of $\|w\|_{\ell_2}$.

\begin{lemma}\label{lemma3_5}
Let $\hat{w}$ be a minimizer of $\Phi_{\lambda}(w, x)$. For a fixed $\beta\geq 0$ and a fixed nonzero $x\in B_R$, there exists $\lambda>0$ such that $\lambda> \max\{{\rm eig} (\hat{w})\}$.
\end{lemma}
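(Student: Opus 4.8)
The plan is to exploit the fact that, by \eqref{equ3_9}, $\max\{{\rm eig}(w)\}$ is completely determined by $\|w\|_{\ell_2}$: a direct diagonalization of $(a_{i,j}(w))$ shows that its only nonzero eigenvalue is $\|w\|_{\ell_2}^{-1}$ (with multiplicity $n-1$), so $\max\{{\rm eig}(\hat w)\}=\|\hat w\|_{\ell_2}^{-1}$ whenever $\hat w\ne 0$. Hence the inequality $\lambda>\max\{{\rm eig}(\hat w)\}$ is equivalent to $\lambda\,\|\hat w\|_{\ell_2}>1$, and it suffices to show that a minimizer $\hat w=\hat w(\lambda)$ cannot approach $0$ as $\lambda$ grows; more precisely, that $\|\hat w(\lambda)\|_{\ell_2}$ stays bounded away from $0$ for all large $\lambda$.

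First I would compare the minimizer against the $\lambda$-independent competitor $w=x$. Since $x\in B_R$ and $\hat w$ minimizes $\Phi_\lambda(\cdot,x)$ over $B_R$, we have $\Phi_\lambda(\hat w,x)\le\Phi_\lambda(x,x)$, and by \eqref{equ3_8} the right-hand side equals $\tfrac12\|Ax-y^{\delta}\|_{\ell_2}^2-\beta\|x\|_{\ell_2}$, a constant independent of $\lambda$. Writing out $\Phi_\lambda(\hat w,x)$ from \eqref{equ3_8}, rearranging, and discarding the two nonpositive terms $-\tfrac12\|A\hat w-y^{\delta}\|_{\ell_2}^2$ and $-\beta\|x\|_{\ell_2}$, I would obtain
\[
\frac{\lambda}{2}\|\hat w-x\|_{\ell_2}^2\le\frac12\|Ax-y^{\delta}\|_{\ell_2}^2+\beta\|\hat w\|_{\ell_2}+\frac12\|A(\hat w-x)\|_{\ell_2}^2.
\]
Using $\|\hat w\|_{\ell_2}\le\|\hat w\|_{\ell_1}\le R$, $\|x\|_{\ell_2}\le R$ and the boundedness of $A$, the whole right-hand side is bounded by a constant $C=C(R,\|A\|,\|y^{\delta}\|_{\ell_2},\beta)$ not depending on $\lambda$; hence $\|\hat w-x\|_{\ell_2}\le\sqrt{2C/\lambda}$.

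Finally, since $x\ne 0$, I would choose $\lambda$ large enough that $\sqrt{2C/\lambda}<\tfrac12\|x\|_{\ell_2}$; then every minimizer $\hat w$ at that level satisfies $\|\hat w\|_{\ell_2}\ge\|x\|_{\ell_2}-\|\hat w-x\|_{\ell_2}>\tfrac12\|x\|_{\ell_2}>0$, so in particular $\hat w\ne0$ and $\max\{{\rm eig}(\hat w)\}=\|\hat w\|_{\ell_2}^{-1}<2\|x\|_{\ell_2}^{-1}$. Enlarging $\lambda$ once more, if necessary, so that also $\lambda>2\|x\|_{\ell_2}^{-1}$, we get $\lambda>\max\{{\rm eig}(\hat w)\}$. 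The one delicate point is that $\hat w$ itself depends on $\lambda$, so before letting $\lambda\to\infty$ the bound $C$ must be made uniform in $\lambda$; this is precisely why the comparison is carried out against the fixed competitor $w=x$, which confines all $\lambda$-dependence to the single term $\tfrac{\lambda}{2}\|\hat w-x\|_{\ell_2}^2$.
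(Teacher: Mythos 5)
Your proof is correct and follows the same basic strategy as the paper's: let $\lambda\to\infty$, force the minimizer $\hat w(\lambda)$ toward the fixed nonzero $x$, and conclude that $\max\{{\rm eig}(\hat w)\}$ stays bounded while $\lambda$ does not. The difference is one of rigor rather than route: the paper simply asserts that ``all minimizers $\hat w$ converge to $x$ as $\lambda\to+\infty$'' and that ${\rm eig}(\hat w)\to{\rm eig}(x)$, whereas you actually prove the needed convergence, uniformly over minimizers, via the competitor comparison $\Phi_\lambda(\hat w,x)\le\Phi_\lambda(x,x)$ and the resulting bound $\|\hat w-x\|_{\ell_2}\le\sqrt{2C/\lambda}$ with $C$ independent of $\lambda$ --- which is exactly the point the paper's sketch glosses over, since $\hat w$ depends on $\lambda$. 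Your explicit diagonalization, $(a_{i,j}(w))=\|w\|_{\ell_2}^{-1}\bigl(I-ww^{T}/\|w\|_{\ell_2}^{2}\bigr)$ so that $\max\{{\rm eig}(w)\}=\|w\|_{\ell_2}^{-1}$ for $n\ge 2$, also lets you bypass any appeal to continuity of eigenvalues; as a side effect it shows that the paper's remark preceding the lemma, that $\max\{{\rm eig}(w)\}$ is an \emph{increasing} function of $\|w\|_{\ell_2}$, has the monotonicity reversed (it is decreasing), though this does not affect the lemma itself.
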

\begin{proof}
As $\lambda\rightarrow +\infty$ in \eqref{equ3_8}, all minimizers $\hat{w}$ of $\Phi_{\lambda}(w, x)$ converge to $x$. Then $ {\rm eig}(\hat{w})\rightarrow {\rm eig}(x)$. Since $0\not=x\in B_R$ is fixed, there exists a large enough $\lambda$ such that $\lambda\geq \max_n\{ {\rm eig}(\hat{w})\}$.
\end{proof}

\begin{lemma}\label{lemma3_6}
For a nonzero minimizer $\hat{w}$ of $\Phi_{\lambda}(w, x)$ and a fixed $\beta\geq 0$, if $\lambda\geq \beta\max\{ {\rm eig}(\hat{w})\}$, then $\Phi_{\lambda}(w, x)$ is locally convex.
\end{lemma}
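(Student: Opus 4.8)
The plan is to show that the Hessian of $\Phi_\lambda(\cdot,x)$, evaluated at (or near) a nonzero minimizer $\hat w$, is positive semi-definite under the hypothesis $\lambda\ge\beta\max\{{\rm eig}(\hat w)\}$, which yields local convexity. First I would compute the Hessian of $\Phi_\lambda(w,x)$ with respect to $w$. From the definition \eqref{equ3_8}, the quadratic terms $\tfrac12\|Aw-y^\delta\|_{\ell_2}^2-\tfrac12\|A(w-x)\|_{\ell_2}^2+\tfrac{\lambda}{2}\|w-x\|_{\ell_2}^2$ contribute a constant Hessian: the two terms involving $A^*A$ cancel against each other in second order (both have Hessian $A^*A$), leaving $\lambda\,\bI$. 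The only nonlinear contribution comes from $-\beta\|w\|_{\ell_2}$, whose Hessian at $w\neq0$ is $-\beta\,(a_{i,j}(w))_{n\times n}$ by the definition of $a_{i,j}(w)$ in \eqref{equ3_9}. Hence the Hessian of $\Phi_\lambda(\cdot,x)$ at $w$ is $\lambda\bI-\beta\,(a_{i,j}(w))_{n\times n}$.

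Next I would argue positive semi-definiteness. The eigenvalues of $\lambda\bI-\beta\,(a_{i,j}(w))_{n\times n}$ are exactly $\lambda-\beta\mu$ as $\mu$ ranges over the eigenvalues ${\rm eig}(w)$ of $(a_{i,j}(w))_{n\times n}$. Since $(a_{i,j}(w))_{n\times n}$ is positive semi-definite (as noted after \eqref{equ3_9}, because $\|w\|_{\ell_2}$ is convex), all $\mu\ge0$, and the smallest eigenvalue of the Hessian is $\lambda-\beta\max\{{\rm eig}(w)\}$. At $w=\hat w$ the hypothesis $\lambda\ge\beta\max\{{\rm eig}(\hat w)\}$ gives that this is $\ge0$, so the Hessian is positive semi-definite at $\hat w$. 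To pass from a single point to a neighborhood, I would use that $w\mapsto\max\{{\rm eig}(w)\}$ is continuous on $\{w\neq0\}$ (indeed the remark after \eqref{equ3_9} states it is an increasing function of $\|w\|_{\ell_2}$, hence in particular continuous there): therefore $\lambda-\beta\max\{{\rm eig}(w)\}\ge0$ persists for all $w$ in a sufficiently small ball around $\hat w$, and on that ball the Hessian of $\Phi_\lambda(\cdot,x)$ is positive semi-definite, i.e.\ $\Phi_\lambda(\cdot,x)$ is (locally) convex near $\hat w$.

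The main obstacle I anticipate is the borderline/degenerate case. If $\lambda=\beta\max\{{\rm eig}(\hat w)\}$ exactly, the Hessian is only positive \emph{semi}-definite at $\hat w$ and strict local convexity fails at that point; moreover, because $\max\{{\rm eig}(w)\}$ is increasing in $\|w\|_{\ell_2}$, one must check that the inequality $\lambda-\beta\max\{{\rm eig}(w)\}\ge0$ does not immediately break on the side where $\|w\|_{\ell_2}$ increases past $\|\hat w\|_{\ell_2}$. A clean way to handle this is to observe that a minimizer $\hat w$ of $\Phi_\lambda(\cdot,x)$ on $B_R$ cannot in general sit on the largest-$\|w\|_{\ell_2}$ boundary of a neighborhood in a way that forces the Hessian eigenvalue below zero, or simply to absorb the borderline case by taking the hypothesis with strict inequality (as in Lemma \ref{lemma3_5}, which already produces $\lambda>\max\{{\rm eig}(\hat w)\}$ with $\beta$ normalized); then a genuine open neighborhood of strict convexity exists by continuity. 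A secondary technical point is justifying differentiability of $\|w\|_{\ell_2}$ to second order, which is fine away from the origin and is exactly why the nonzero hypothesis on $\hat w$ is imposed. Once these edge cases are dispatched, the statement follows directly from the eigenvalue computation above.
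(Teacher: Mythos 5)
Your argument is essentially the paper's own proof: the two $A^*A$ contributions to the Hessian cancel, leaving $\lambda\,\delta_{ij}-\beta\,a_{i,j}(w)$, and the hypothesis $\lambda\geq \beta\max\{ {\rm eig}(\hat{w})\}$ makes this matrix positive semi-definite at $\hat{w}$. You are in fact slightly more careful than the paper, which stops at pointwise semi-definiteness; your continuity argument for extending the sign condition to a neighborhood, and your flagging of the borderline case $\lambda= \beta\max\{ {\rm eig}(\hat{w})\}$, are legitimate refinements rather than deviations.
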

\begin{proof}
By the definition of $\Phi_{\lambda}(w, x)$,
\[  \frac{\partial^2 \Phi_{\lambda}(w, x)}{\partial w_i\partial w_j}=\lambda\,\delta_{ij}-\beta\,a_{i,j}(w),\quad 1\le i,j\le n. \]
By the assumption $\lambda\geq \beta\max\{ {\rm eig}(\hat{w})\}$, the Hessian matrix
$(\frac{\partial^2 \Phi_{\lambda}(w, x)}{\partial w_i\partial w_j}\big|_{w=\hat{w}})$ is positive semi-definite. This proves the lemma.
\end{proof}

In Lemma \ref{lemma3_6}, we assume $\lambda\geq \beta\max\{ {\rm eig}(\hat{w})\}$. This condition is weaker than
$\lambda\geq \max\{ {\rm eig}(\hat{w})\}$. In general, the regularization parameter $\alpha\ll 1$ in the Tihkonov regularization.
Since $\beta=\alpha\eta$ and $0\le\eta\leq 1$, we also have $\beta\ll 1$.

\begin{lemma}\label{lemma3_7}
Let $0\not=\hat{w}\in B_R$ and $\lambda\geq \beta\max\{ {\rm eig}(\hat{w})\}$.  Then $\hat{w}$ is a minimizer of $\Phi_{\lambda}(w, x)$ on $B_R$ if and only if
\begin{equation}\label{equ1_80}
\hat{w}=\mathbb{P}_{R}\left(x+\frac{\beta \hat{w}}{\lambda\|\hat{w}\|_{\ell_2}}-\frac{1}{\lambda} A^{*}(Ax-y^{\delta})\right).
\end{equation}
\end{lemma}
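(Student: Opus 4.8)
The plan is to characterise the minimizer through a first–order optimality condition and then translate that condition into the fixed–point equation \eqref{equ1_80} by means of the variational characterisation of the projection in Lemma \ref{lemma2_7}, exactly in the spirit of the proofs of Lemma \ref{lemma3_2} and Lemma \ref{lemma4_1}. The point that makes this possible is that $\hat w\neq 0$, so $w\mapsto\|w\|_{\ell_2}$ is differentiable at $\hat w$ and hence $\Phi_\lambda(\cdot,x)$ is of class $C^1$ in a neighbourhood of $\hat w$; the whole argument rests on having one–sided directional derivatives available.

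For the ``only if'' direction I would fix $w\in B_R$ and, using convexity of $B_R$, consider $g(t):=\Phi_\lambda((1-t)\hat w+tw,x)$ for $t\in[0,1]$, which stays feasible. If $\hat w$ minimizes $\Phi_\lambda(\cdot,x)$ over $B_R$ then $g$ attains its minimum at $t=0$, so $g'(0^+)\ge0$. Differentiating \eqref{equ3_8} term by term, the two quadratic contributions $\frac12\|Aw-y^\delta\|_{\ell_2}^2$ and $-\frac12\|A(w-x)\|_{\ell_2}^2$ combine so that the $A\hat w$ part cancels, leaving
\[ g'(0^+)=\Big\langle A^{*}(Ax-y^{\delta})-\frac{\beta\hat w}{\|\hat w\|_{\ell_2}}+\lambda(\hat w-x),\,w-\hat w\Big\rangle\ge 0 . \]
Dividing by $\lambda>0$ and rearranging yields
\[ \Big\langle\Big(x+\frac{\beta\hat w}{\lambda\|\hat w\|_{\ell_2}}-\frac{1}{\lambda} A^{*}(Ax-y^{\delta})\Big)-\hat w,\ w-\hat w\Big\rangle\le 0\qquad\forall\,w\in B_R, \]
which by Lemma \ref{lemma2_7} (applied in $H=\mathbb{R}^n$) is precisely the assertion that $\hat w=\mathbb{P}_R\!\big(x+\beta\hat w/(\lambda\|\hat w\|_{\ell_2})-\lambda^{-1}A^{*}(Ax-y^{\delta})\big)$, i.e.\ \eqref{equ1_80}. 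Note this implication does not use the hypothesis $\lambda\ge\beta\max\{\mathrm{eig}(\hat w)\}$.

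For the ``if'' direction I would reverse the last step: assuming \eqref{equ1_80}, Lemma \ref{lemma2_7} gives the variational inequality above, and multiplying by $-\lambda$ recovers $g'(0^+)\ge 0$ for every $w\in B_R$. The remaining, genuinely delicate, point is to upgrade this first–order condition to actual minimality — recall from the remark following Lemma \ref{lemma3_2} that, because of the concave term $-\beta\|\cdot\|_{\ell_2}$, such a condition is in general only necessary. This is where the hypothesis is indispensable: by Lemma \ref{lemma3_6}, $\lambda\ge\beta\max\{\mathrm{eig}(\hat w)\}$ forces the Hessian $\big(\partial^2\Phi_\lambda/\partial w_i\partial w_j\big)=\lambda\delta_{ij}-\beta a_{i,j}(w)$ to be positive semi-definite at $\hat w$, and, since $w\mapsto a_{i,j}(w)$ is continuous away from the origin, it remains positive semi-definite on a neighbourhood of $\hat w$, so $\Phi_\lambda(\cdot,x)$ is convex there; combined with $g'(0^+)\ge 0$ along all feasible directions this identifies $\hat w$ as a minimizer. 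The main obstacle is exactly this step: one must invoke the local convexity from Lemma \ref{lemma3_6} to rule out that the concave $-\beta\|\cdot\|_{\ell_2}$ term lets a competitor in $B_R$ beat $\hat w$, all the while keeping track of the fact that the Hessian formula is only valid where $\hat w\neq 0$.
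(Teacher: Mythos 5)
Your proposal follows essentially the same route as the paper's own proof: the ``only if'' direction computes the one-sided derivative of $t\mapsto\Phi_{\lambda}((1-t)\hat{w}+tw,x)$ and converts the resulting variational inequality into \eqref{equ1_80} via Lemma \ref{lemma2_7}, and the ``if'' direction reverses this and invokes the local convexity from Lemma \ref{lemma3_6} to upgrade the first-order condition to minimality. Your added remark that positive semi-definiteness of the Hessian propagates to a neighbourhood by continuity is not strictly valid in the borderline case $\lambda=\beta\max\{\mathrm{eig}(\hat{w})\}$, but the paper's proof leans on the same local-convexity step with no more rigor (it applies the convexity inequality along the whole segment from $\hat{w}$ to an arbitrary $w\in B_R$), so your argument is faithful to the published one.
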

\begin{proof}
By the definition of $\hat{w}$, for any $w\in B_R$, the function
\begin{align*}
f(t)&=\frac{1}{2}\| A((1-t)\hat{w}+tw)-y^{\delta}\|_{\ell_2}^{2}-\beta\| (1-t)\hat{w}+tw\|_{\ell_2}\\
&\quad{} -\frac{1}{2}\|A((1-t)\hat{w}+tw-x)\|_{\ell_2}^2+\frac{\lambda}{2}\|(1-t)\hat{w}+tw-x\|_{\ell_2}^2,\quad t\in [0,1]
\end{align*}
has its minimum at $t=0$.  Thus,
\begin{align*}
f^\prime(0+)& =\langle A\hat{w}-y^{\delta},A(w-\hat{w})\rangle-\beta\|\hat{w}\|_{\ell_2}^{-1}\langle \hat{w},w-\hat{w}\rangle-\langle A\hat{w}-Ax,A(w-\hat{w})\rangle+\lambda\langle \hat{w}-x,w-\hat{w}\rangle\\
& \ge 0,
\end{align*}
i.e.,
\[
\left\langle \frac{1}{\lambda} A^*(Ax-y)+\hat{w}-x-\frac{\beta}{\lambda}\frac{\hat{w}}{\|\hat{w}\|_{\ell_2}}, w-\hat{w}\right\rangle\geq 0.
\]
By Lemma \ref{lemma2_7}, this implies \eqref{equ1_80}.

On the other hand, let now $\hat{w}\in B_R$ be such that \eqref{equ1_80} holds.  By Lemma \ref{lemma2_7}, we have
\[
\left\langle x+\frac{\beta\hat{w}}{\lambda\|\hat{w}\|_{\ell_2}}-\frac{1}{\lambda} A^*(Ax-y)-\hat{w}, w-\hat{w}\right\rangle\leq 0.
\]
Define
\begin{equation}\label{equ1_81}
J(w):=\Phi_{\lambda}(w, x)=\frac{1}{2}\|Aw-y\|_{\ell_2}^2-\beta\|w\|_{\ell_2}-\frac{1}{2}\|A(w-x)\|_{\ell_2}^2+\frac{\lambda}{2}\|w-x\|_{\ell_2}^2.
\end{equation}
If $w\neq 0$, we have
\begin{equation}\label{equ1_82}
J'(w)=A^*(Ax-y)+\lambda(w-x)-\beta\frac{w}{\|w\|_{\ell_2}}.
\end{equation}
By \eqref{equ1_82}, this implies that
\begin{equation}\label{equ1_86}
0\leq \langle J'(\hat{w}), w-\hat{w}\rangle=\lim\limits_{t\rightarrow 0^+}\frac{J(\hat{w}+t(w-\hat{w}))-J(\hat{w})}{t}.
\end{equation}
By assumption and Lemma \ref{lemma3_6}, $\Phi_{\lambda}(w, x)$ is locally convex at $\hat{w}$. This implies that
\begin{align*}
0&\leq \langle J'(\hat{w}), w-\hat{w}\rangle=\lim\limits_{t\rightarrow 0^+}\frac{J(\hat{w}+t(w-\hat{w}))-J(\hat{w})}{t}\nonumber
\\& \leq \lim\limits_{t\rightarrow 0^+}\frac{tJ(w)+(1-t)J(\hat{w})-J(\hat{w})}{t}=J(w)-J(\hat{w})
\end{align*}
for all $w\in B_R$. This proves the lemma.
\end{proof}

Denoting by $x^{k+1}$ the sequence generated by the formula
\begin{equation}
x^{k+1}=\mathbb{P}_{R}\left(x^k+\frac{\beta x^{k+1}}{\lambda\|x^{k+1}\|_{\ell_2}}-\frac{1}{\lambda}A^{*}(Ax^k-y^{\delta})\right).
\label{4.8a}
\end{equation}
The projected gradient algorithm based on the surrogate function is stated in the form of Algorithm \ref{alg4}.

%aaaaaaaaaaaaaaaaaaaaaaaaaaaaaaaaaaaaaaaaaaaaaaaaaaaaaaaaaaaaaaaaaaaaaaaaaaaaaaaaaaaaaaaaaaaaaaaaaaaaaaaaaaaaaaaaaaaaaaaaaaaaaaaaaa
\begin{algorithm}
\caption{PG algorithm for (\ref{equ1_8}) based on the surrogate function approach}
\label{alg4}
\begin{algorithmic}
\STATE{Choose $x^0\in \mathbb{R}^n$, $R_0\in \mathbb{R}^+$, $\beta=O(\delta)$ and $\lambda$ such that $\lambda>\beta\max\{\rm (eig(x^0), eig(x^{\dag})\}$}
\STATE{for $j$ = 0, 1, 2, $\cdots$ do}
\STATE{~~~~for $k$ = 0, 1, 2, $\cdots$ do}
\STATE{~~~~~~~~$x^{k+1}=\mathbb{P}_{R_j}\left(x^k+\frac{\beta x^{k+1}}{\lambda\|x^{k+1}\|_{\ell_2}}-\frac{1}{\lambda}A^{*}(Ax^k-y^{\delta})\right)$\quad (by fixed point iteration)}
\STATE{~~~~~~~~$k=k+1$}
\STATE{~~~~end for}
\STATE{~~~~~~~~if (2.2) is satisfied, set $R_{j+1}=R_j+c$, $c>1$}
\STATE{~~~~~~~~~~~otherwise stop iteration}
\STATE{~~~~~~~~end if}
\STATE{~~~~$j=j+1$}
\STATE{end for}
\end{algorithmic}
\end{algorithm}
%aaaaaaaaaaaaaaaaaaaaaaaaaaaaaaaaaaaaaaaaaaaaaaaaaaaaaaaaaaaaaaaaaaaaaaaaaaaaaaaaaaaaaaaaaaaaaaaaaaaaaaaaaaaaaaaaaaaaaaaaaaaaaaaaaa

To prove the convergence of Algorithm \ref{alg4}, we impose some restrictions on the operator $A$ and $\lambda$.

\begin{assumption}\label{assumption3_8}
Let $r:=\|A^*A\|_{{L(\mathbb{R}^n, \mathbb{R}^n)}}<1$. Assume that

${\rm {(A1)}}\quad \|Ax\|_{\ell_2}^2\leq \frac{\lambda\,r}{2}\|x\|_{\ell_2}^2$ for all $x\in \ell_2$

${\rm {(A2)}}\quad \lambda\geq \beta\max\{ {\rm eig}(x^k)\}$ for all $k$.
\end{assumption}

In Assumption \ref{assumption3_8}, we let $r:=\|A^*A\|_{{L(\mathbb{R}^n, \mathbb{R}^n)}}<1$. In the classical theory of
sparsity regularization, the value of $\|A_{m\times n}\|_2$ is assumed to be less than 1 (\cite{DDD04}), where $m$ denotes the number of rows in the operator $A$. This requirement is still needed in this paper. If $r>1$, we need to re-scale the original ill-posed problem by $A_{m\times n}x_n=y_m\rightarrow \left(\frac{1}{c}A_{m\times n}\right)x_n=\frac{1}{c}y_m$ so that $\frac{1}{c^2}\|A^*A\|_{{L(\mathbb{R}^n, \mathbb{R}^n)}}<1$, where $c>1$. If $r<1$, we let $\lambda>2$; then ${\rm {(A1)}}$ holds. As for ${\rm {(A2)}}$, it seems that we need to compute eigenvalues for every $(a_{ij}(x^k))_{n\times n}$. However, we can give an approximation for the eigenvalues of $(a_{ij}(x^k))_{n\times n}$. In this paper, we first estimate the value of $\|x^{\dag}\|_{\ell_2}$ and $\|x^{0}\|_{\ell_2}$, then we can give an approximation for the order of the maximal eigenvalues of $\|x^{\dag}\|_{\ell_2}$ and $\|x^{0}\|_{\ell_2}$. Subsequently, we choose $\lambda$ such that $\lambda$ is greater than the order of the maximal eigenvalues of $\|x^{\dag}\|_{\ell_2}$ and $\|x^{0}\|_{\ell_2}$. If the value of $\|x^{\dag}\|_{\ell_2}$ is too small, we can re-scale the original ill-posed problem by $A_{m\times n}x_n=y_m\rightarrow \left(\frac{1}{c}A_{m\times n}\right)(cx_n)=y_m$ to increase the value of $\|x^{\dag}\|_{\ell_2}$, where $c>1$. Meanwhile, this strategy can reduce the value of $\|A_{m\times n}\|_2$, see Section \ref{sec6} for details.

\begin{lemma}\label{lemma3_9}
Let Assumption \ref{assumption3_8} hold with $\{x^{k+1}\}$ generated by \eqref{4.8a}.  Then,
\[ \mathcal{D}_{\beta}^{\delta}(x^{k+1})\leq \mathcal{D}_{\beta}^{\delta}(x^{k})\]
and
\[\lim\limits_{k\rightarrow \infty}\|x^{k+1}-x^k\|_{\ell_2}=0.\]
\end{lemma}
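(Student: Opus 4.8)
The plan is to run the standard surrogate-function (majorization–minimization) descent argument. The starting point is that, by Lemma~\ref{lemma3_7} together with hypothesis (A2) of Assumption~\ref{assumption3_8}, every iterate $x^{k+1}$ generated by \eqref{4.8a} is a minimizer of the surrogate functional $w\mapsto\Phi_{\lambda}(w,x^{k})$ over $B_{R}$ (here one uses that $x^{k+1}\neq0$, as required by Lemma~\ref{lemma3_7}). In particular, comparing the minimal value with the value at $w=x^{k}$,
\[ \Phi_{\lambda}(x^{k+1},x^{k})\le\Phi_{\lambda}(x^{k},x^{k}). \]

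First I would rewrite $\Phi_{\lambda}$ in terms of $\mathcal{D}_{\beta}^{\delta}$. From the definition \eqref{equ3_8}, $\Phi_{\lambda}(w,x)=\mathcal{D}_{\beta}^{\delta}(w)-\tfrac12\|A(w-x)\|_{\ell_{2}}^{2}+\tfrac{\lambda}{2}\|w-x\|_{\ell_{2}}^{2}$, so that $\Phi_{\lambda}(x^{k},x^{k})=\mathcal{D}_{\beta}^{\delta}(x^{k})$ and $\Phi_{\lambda}(x^{k+1},x^{k})=\mathcal{D}_{\beta}^{\delta}(x^{k+1})-\tfrac12\|A(x^{k+1}-x^{k})\|_{\ell_{2}}^{2}+\tfrac{\lambda}{2}\|x^{k+1}-x^{k}\|_{\ell_{2}}^{2}$. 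Substituting into the displayed inequality gives
\[ \mathcal{D}_{\beta}^{\delta}(x^{k+1})\le\mathcal{D}_{\beta}^{\delta}(x^{k})-\Big(\tfrac{\lambda}{2}\|x^{k+1}-x^{k}\|_{\ell_{2}}^{2}-\tfrac12\|A(x^{k+1}-x^{k})\|_{\ell_{2}}^{2}\Big). \]
Applying hypothesis (A1) to the vector $x^{k+1}-x^{k}$ yields $\|A(x^{k+1}-x^{k})\|_{\ell_{2}}^{2}\le\tfrac{\lambda r}{2}\|x^{k+1}-x^{k}\|_{\ell_{2}}^{2}$, so the bracketed quantity is at least $C\,\|x^{k+1}-x^{k}\|_{\ell_{2}}^{2}$ with $C:=\tfrac{\lambda}{2}\big(1-\tfrac{r}{2}\big)>0$, the positivity of $C$ using the normalization $r<1$. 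Hence
\[ \mathcal{D}_{\beta}^{\delta}(x^{k+1})+C\,\|x^{k+1}-x^{k}\|_{\ell_{2}}^{2}\le\mathcal{D}_{\beta}^{\delta}(x^{k}), \]
and in particular $\mathcal{D}_{\beta}^{\delta}(x^{k+1})\le\mathcal{D}_{\beta}^{\delta}(x^{k})$, which is the first assertion.

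For the second assertion I would telescope: summing the last inequality over $k=0,\dots,N$ gives $C\sum_{k=0}^{N}\|x^{k+1}-x^{k}\|_{\ell_{2}}^{2}\le\mathcal{D}_{\beta}^{\delta}(x^{0})-\mathcal{D}_{\beta}^{\delta}(x^{N+1})$. Since each $x^{k}$ lies in the compact set $B_{R}\subset\mathbb{R}^{n}$ and $\mathcal{D}_{\beta}^{\delta}$ is continuous, $\mathcal{D}_{\beta}^{\delta}$ is bounded below on $B_{R}$; therefore the partial sums are bounded uniformly in $N$, the series $\sum_{k}\|x^{k+1}-x^{k}\|_{\ell_{2}}^{2}$ converges, and consequently $\|x^{k+1}-x^{k}\|_{\ell_{2}}\to0$.

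The argument is essentially routine once the surrogate decomposition of $\Phi_{\lambda}$ is written down; the only delicate point is ensuring that the per-step decrease of $\mathcal{D}_{\beta}^{\delta}$ dominates a fixed positive multiple of $\|x^{k+1}-x^{k}\|_{\ell_{2}}^{2}$, which is precisely where (A1) and the scaling $r<1$ enter, together with verifying that $x^{k+1}\neq0$ for all $k$ so that Lemma~\ref{lemma3_7} applies (if some iterate vanishes, that step has to be handled separately, or is ruled out by the standing hypotheses behind \eqref{4.8a}).
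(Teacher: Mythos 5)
Your proposal is correct and follows essentially the same route as the paper: identify $x^{k+1}$ as a minimizer of the surrogate $\Phi_{\lambda}(\cdot,x^{k})$ via Lemma \ref{lemma3_7}, use $\Phi_{\lambda}(x^{k+1},x^{k})\le\Phi_{\lambda}(x^{k},x^{k})=\mathcal{D}_{\beta}^{\delta}(x^{k})$ together with (A1) to get the per-step decrease by $\frac{\lambda(2-r)}{4}\|x^{k+1}-x^{k}\|_{\ell_2}^{2}$, and telescope. The only cosmetic differences are that you bound $\mathcal{D}_{\beta}^{\delta}$ below by compactness and continuity where the paper uses the explicit bound $\mathcal{D}_{\beta}^{\delta}(x)\ge-\beta R$ on $B_R$, and you correctly flag the $x^{k+1}\neq0$ caveat that the paper defers to Remark \ref{remark3_10}.
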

\begin{proof}
By Lemma \ref{lemma3_7} and the definition of $x^{k+1}$, we see that $x^{k+1}$ is a minimizer of $\Phi_{\lambda}(w, x^k)$. Then we have
\begin{align*}
\mathcal{D}_{\beta}^{\delta}(x^{k+1})&\leq \mathcal{D}_{\beta}^{\delta}(x^{k+1})+\frac{2-r}{2r}\|A(x^{k+1}-x^k)\|_Y^2\\
& \leq\frac{1}{2}\| Ax^{k+1}-y\|_Y^{2}-\beta\| x^{k+1}\|_{\ell_2}+\frac{1}{r}\|A(x^{k+1}-x^{k})\|_Y^{2}-\frac{1}{2}\|A(x^{k+1}-x^{k})\|_Y^{2}\\
& \leq\frac{1}{2}\| Ax^{k+1}-y\|_Y^{2}-\beta\| x^{k+1}\|_{\ell_2}-\frac{1}{2}\|A(x^{k+1}-x^{k})\|_Y^{2}+\frac{\lambda}{2}\|x^{k+1}-x^{k}\|_{\ell_2}^{2}\\
& = \Phi_{\lambda}(x^{k+1}, x^{k})\leq \Phi_{\lambda}(x^{k}, x^{k})=\mathcal{D}_{\beta}^{\delta}(x^{k}).
\end{align*}
Furthermore,
\begin{align*}
\Phi_{\lambda}(x^{k+1}, x^{k})-\Phi_{\lambda}(x^{k+1}, x^{k+1})&=\frac{\lambda}{2}\|x^{k+1}-x^{k}\|_{\ell_2}^{2}-\frac{1}{2}\|A(x^{k+1}-x^{k})\|_Y^{2}\\
& \geq \frac{\lambda(2-r)}{4}\|x^{k+1}-x^{k}\|_{\ell_2}^{2}.
\end{align*}
This implies
\begin{align*}
\sum\limits_{k=0}^N\|x^{k+1}-x^{k}\|_{\ell_2}^{2}&\leq  \frac{4}{\lambda(2-r)}\sum\limits_{k=0}^N\left(\Phi_{\lambda}(x^{k+1}, x^{k})-\Phi_{\lambda}(x^{k+1}, x^{k+1}) \right)\\
&\leq \frac{4}{\lambda(2-r)}\sum\limits_{k=0}^N\left(\Phi_{\lambda}(x^{k}, x^{k})-\Phi_{\lambda}(x^{k+1}, x^{k+1}) \right)\\
&= \frac{4}{\lambda(2-r)}\left(\Phi_{\lambda}(x^{0}, x^{0})-\Phi_{\lambda}(x^{N+1}, x^{N+1}) \right)\\
&\leq  \frac{4}{\lambda(2-r)}(\Phi_{\lambda}(x^{0}, x^{0})+\beta R).
\end{align*}
Since $\sum\limits_{k=0}^N\|x^{k+1}-x^{k}\|_{\ell_2}^{2}$ is uniformly bounded with respect to $N$, the series $\sum_{k=0}^{\infty}\|x^{k+1}-x^{k}\|_{\ell_2}^{2}$
converges. This proves the lemma.
\end{proof}

\begin{remark}\label{remark3_10}
To prove the convergence, we need to analyze the relation between $x^k$ and 0. If $0=x^0=x^1$,
then we stop the iteration and 0 is the iterative solution. Otherwise, by Lemma \ref{lemma3_9},
$\mathcal{D}_{\beta}^{\delta}(x^{k})$ decreases, which implies that $x^k\neq0$ for $k\geq 1$. So in the following we let $x^k\neq0$ whenever $k\geq 1$.
\end{remark}

\begin{lemma}\label{lemma2_71}
Denote $\Psi(\hat{w}):=\mathbb{P}_{R}\left(x+\frac{\beta \hat{w}}{\lambda\|\hat{w}\|_{\ell_2}}-\frac{1}{\lambda} A^{*}(Ax-y^{\delta})\right)$. Then the fixed point iteration $\hat{w}^{l+1}=\Psi(\hat{w}^l)$ has a subsequence which converges to an element $\hat{w}$. If $\hat{w}\neq 0$, then $\hat{w}$ is a fixed point of $\Psi(\hat{w})$.
\end{lemma}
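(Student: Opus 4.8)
The plan is to dispose of the first assertion by compactness and to reduce the second one to an \emph{asymptotic regularity} property, $\|\hat w^{l+1}-\hat w^{l}\|_{\ell_2}\to 0$, which I would extract from the variational characterization of the projection. For the first assertion: for every $l\ge 1$ the iterate is of the form $\hat w^{l}=\Psi(\hat w^{l-1})=\mathbb{P}_{R}(\cdots)\in B_{R}$, and in $\mathbb{R}^{n}$ the $\ell_1$-ball $B_{R}$ is closed and bounded, hence compact, so the Bolzano--Weierstrass theorem produces a subsequence $\hat w^{l_{j}}\to\hat w\in B_{R}$.

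Now suppose $\hat w\ne 0$. The map $\Psi$ is continuous on $\mathbb{R}^{n}\setminus\{0\}$, being the composition of $w\mapsto x+\frac{\beta w}{\lambda\|w\|_{\ell_2}}-\frac{1}{\lambda}A^{*}(Ax-y^{\delta})$, which is continuous away from the origin, with the projection $\mathbb{P}_{R}$, which is non-expansive and hence continuous by Lemma~\ref{lemma2_7}. Consequently $\hat w^{l_{j}}\ne 0$ for $j$ large and $\hat w^{l_{j}+1}=\Psi(\hat w^{l_{j}})\to\Psi(\hat w)$. If in addition $\|\hat w^{l+1}-\hat w^{l}\|_{\ell_2}\to 0$, then $\hat w^{l_{j}+1}\to\hat w$ as well, and uniqueness of limits forces $\hat w=\Psi(\hat w)$. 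So everything comes down to asymptotic regularity.

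To establish it, write $v:=x-\frac{1}{\lambda}A^{*}(Ax-y^{\delta})$ and $p^{l}:=\frac{\beta\hat w^{l}}{\lambda\|\hat w^{l}\|_{\ell_2}}$ (all iterates are nonzero, for $\Psi$ to be defined; cf.\ Remark~\ref{remark3_10}), so that $\hat w^{l+1}=\mathbb{P}_{R}(v+p^{l})$ and $\|p^{l}\|_{\ell_2}=\beta/\lambda$. Applying the variational inequality of Lemma~\ref{lemma2_7} to $\hat w^{l+1}=\mathbb{P}_{R}(v+p^{l})$ with test element $\hat w^{l}\in B_{R}$, and to $\hat w^{l}=\mathbb{P}_{R}(v+p^{l-1})$ with test element $\hat w^{l+1}\in B_{R}$, and adding the two inequalities, one obtains
\[
\|\hat w^{l+1}-\hat w^{l}\|_{\ell_2}^{2}\le\big\langle p^{l}-p^{l-1},\,\hat w^{l+1}-\hat w^{l}\big\rangle\le\|p^{l}-p^{l-1}\|_{\ell_2}\,\|\hat w^{l+1}-\hat w^{l}\|_{\ell_2},
\]
hence $\|\hat w^{l+1}-\hat w^{l}\|_{\ell_2}\le\|p^{l}-p^{l-1}\|_{\ell_2}$. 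Combining this with the elementary bound $\big\|\frac{a}{\|a\|_{\ell_2}}-\frac{b}{\|b\|_{\ell_2}}\big\|_{\ell_2}\le 2\|a-b\|_{\ell_2}/\max\{\|a\|_{\ell_2},\|b\|_{\ell_2}\}$ yields $\|\hat w^{l+1}-\hat w^{l}\|_{\ell_2}\le\frac{2\beta}{\lambda\rho}\,\|\hat w^{l}-\hat w^{l-1}\|_{\ell_2}$, provided a uniform lower bound $\|\hat w^{l}\|_{\ell_2}\ge\rho>0$ is available.

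The hard part is exactly this lower bound, i.e.\ ruling out that the iterates drift toward the origin, and it is here that the hypothesis $\hat w\ne 0$ together with the freedom to take $\lambda$ large (consistent with $\beta=O(\delta)$ and Assumption~\ref{assumption3_8}) play their role. Since $\mathbb{P}_{R}$ is non-expansive with $\mathbb{P}_{R}(0)=0$, we have $\|\hat w^{l_{j}}\|_{\ell_2}\le\|v+p^{l_{j}-1}\|_{\ell_2}\le\|v\|_{\ell_2}+\beta/\lambda$, so letting $j\to\infty$ gives $\|v\|_{\ell_2}\ge\|\hat w\|_{\ell_2}-\beta/\lambda$, which is positive as soon as $\lambda>\beta/\|\hat w\|_{\ell_2}$; then $\|v+p^{l}\|_{\ell_2}\ge\|v\|_{\ell_2}-\beta/\lambda>0$ for all $l$. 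Since moreover $\mathbb{P}_{R}(z)=0$ only for $z=0$ and $\|\mathbb{P}_{R}(z)\|_{\ell_2}\ge\min\{\|z\|_{\ell_2},\,R/\sqrt{n}\}$ for $z\ne 0$ (by Lemma~\ref{lemma2_6} and $\|\cdot\|_{\ell_1}\le\sqrt{n}\,\|\cdot\|_{\ell_2}$ on $\mathbb{R}^{n}$), we get the uniform bound $\|\hat w^{l+1}\|_{\ell_2}\ge\rho:=\min\{\|v\|_{\ell_2}-\beta/\lambda,\,R/\sqrt{n}\}>0$ for all $l\ge 0$. Taking $\lambda$ also large enough that $2\beta/(\lambda\rho)<1$, the recursion above forces geometric decay, hence $\|\hat w^{l+1}-\hat w^{l}\|_{\ell_2}\to 0$, and the proof is complete. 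Were the subsequential limit equal to $0$, the normalization $w\mapsto w/\|w\|_{\ell_2}$ in $\Psi$ would be discontinuous there and these estimates would break down, which is precisely why the conclusion must be stated conditionally on $\hat w\ne 0$.
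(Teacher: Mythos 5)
Your first half coincides with the paper's entire proof: boundedness of $\{\hat w^{l}\}\subset B_{R}$ gives a convergent subsequence, and non-expansiveness of $\mathbb{P}_{R}$ (Lemma \ref{lemma2_7}) gives continuity of $\Psi$ away from the origin, hence $\Psi(\hat w^{l_{j}})\to\Psi(\hat w)$ when $\hat w\neq 0$. Where you and the paper part ways is precisely at the step you isolate as the crux: the paper passes from $\lim_{j}\hat w^{l_{j}+1}=\lim_{j}\Psi(\hat w^{l_{j}})$ directly to $\hat w=\Psi(\hat w)$, tacitly assuming that the shifted subsequence $\{\hat w^{l_{j}+1}\}$ has the same limit as $\{\hat w^{l_{j}}\}$. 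Nothing in the paper justifies this, so your diagnosis that some form of asymptotic regularity ($\|\hat w^{l+1}-\hat w^{l}\|_{\ell_2}\to 0$) must be supplied is exactly right, and the paper's own proof does not supply it.

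Your repair, however, does not prove the lemma as stated. The individual ingredients are sound: the two-sided variational inequality giving $\|\hat w^{l+1}-\hat w^{l}\|_{\ell_2}\le\|p^{l}-p^{l-1}\|_{\ell_2}$, the bound $\bigl\|a/\|a\|_{\ell_2}-b/\|b\|_{\ell_2}\bigr\|_{\ell_2}\le 2\|a-b\|_{\ell_2}/\max\{\|a\|_{\ell_2},\|b\|_{\ell_2}\}$, and the lower bound $\|\mathbb{P}_{R}(z)\|_{\ell_2}\ge\min\{\|z\|_{\ell_2},R/\sqrt{n}\}$ via Lemma \ref{lemma2_6}. But the resulting recursion yields asymptotic regularity only when $2\beta/(\lambda\rho)<1$, and you secure this by ``taking $\lambda$ large'' --- whereas $\lambda$ is a datum of the lemma, fixed by Algorithm \ref{alg4} and Assumption \ref{assumption3_8}, not a free parameter. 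Moreover $\rho$ depends on $\hat w$ and on $v$, both of which depend on $\lambda$, so the quantifier order in ``choose $\lambda$ large enough that $2\beta/(\lambda\rho)<1$'' needs untangling before the condition is even well posed; and there is a small constant slip ($\|v+p^{l}\|_{\ell_2}\ge\|v\|_{\ell_2}-\beta/\lambda>0$ requires $\lambda>2\beta/\|\hat w\|_{\ell_2}$, not $\lambda>\beta/\|\hat w\|_{\ell_2}$). Note also that under your extra hypothesis $\Psi$ is a contraction on the set where the iterates live, so the whole sequence converges and the subsequence formulation becomes superfluous. In short: you prove a correct statement under a strictly stronger hypothesis on $\lambda$, and in doing so you expose, rather than close, the gap in the paper's one-line conclusion.
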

\begin{proof}
By Lemma \ref{lemma2_7}, $\mathbb{P}_{R}(x)$ is non-expansive,
\[
\|\Psi(\hat{w}_1)-\Psi(\hat{w}_2)\|_{\ell_2}\leq\left\|\frac{\beta \hat{w}_1}{\lambda\|\hat{w}_1\|_{\ell_2}}-\frac{\beta \hat{w}_2}{\lambda\|\hat{w}_2\|_{\ell_2}}\right\|_{\ell_2},
\]
which implies $\Psi(\hat{w})$ is continuous at any nonzero element $w$. Since $\{\hat{w}^l\}$ is bounded, it has a subsequence $\{\hat{w}^{l_k}\}$ which converges to an element $\hat{w}\in B_R$. Since $\hat{w}^{l_k+1}=\Psi(\hat{w}^{l_k})$,
\begin{equation}\label{equ1_88}
\lim\limits_k\hat{w}^{l_k+1}=\lim\limits_k\Psi(\hat{w}^{l_k}).
\end{equation}
If $\hat{w}\neq 0$, it follows from \eqref{equ1_88} that $\hat{w}=\Psi(\hat{w})$.
\end{proof}

Even though $\mathbb{P}_{R}(x)$ is non-expansive, the map $\Psi(\hat{w})$ is not necessarily non-expansive. So we only have the existence of a fixed point. We can not ensure uniqueness of the fixed point. Indeed, due to the non-convexity of $\Phi_{\lambda}(w, x)$ in \eqref{equ3_8}, the minimizer of
\eqref{equ3_8} may be non-unique. Nevertheless, the convergence still holds and the limit depends on the choice of the initial vector $x^0$.

\begin{theorem}\label{theorem3_12}{\rm(Convergence)}
Let $\{x^k\}$ be the sequence generated by
\[
x^{k+1}=\mathbb{P}_{R}\left(x^k+\frac{\beta x^{k+1}}{\lambda\|x^{k+1}\|_{\ell_2}}-\frac{1}{\lambda}A^{*}(Ax^k-y^{\delta})\right).
\]
Then $\{x^k\}$ has a subsequence which converges to a nonzero stationary point $x^*$ of \eqref{equ1_8}, i.e.\ $x^*$ satisfies
\[\left\langle\frac{\beta x^{*}}{\|x^{*}\|_{\ell_2}}-A^{*}(Ax^{*}-y^{\delta}), w-x^{*}\right\rangle\leq 0 \quad \forall\, w\in B_R.\]
\end{theorem}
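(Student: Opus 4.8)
The plan is to follow the classical compactness-and-limit argument used for projected gradient iterations, adapting it to the non-convex term $-\beta\|x\|_{\ell_2}$. First I would establish boundedness of the sequence $\{x^k\}$: since $x^k\in B_R$ for every $k$ (the iterates are outputs of $\mathbb{P}_R$), the sequence lies in the compact set $B_R\subset\mathbb{R}^n$, so boundedness is immediate and there is a subsequence $\{x^{k_j}\}$ converging to some $x^*\in B_R$. By Lemma \ref{lemma3_9} we have $\|x^{k+1}-x^k\|_{\ell_2}\to 0$, hence along the same subsequence $x^{k_j+1}\to x^*$ as well. The next task is to argue $x^*\neq 0$, which I would get from Remark \ref{remark3_10}: once $x^1\neq 0$ (the nondegenerate case), $\mathcal{D}_\beta^\delta(x^k)$ is strictly decreasing by Lemma \ref{lemma3_9}, so $x^k$ stays away from $0$; more carefully, one shows $\liminf_k\|x^k\|_{\ell_2}>0$ because if $x^{k_j}\to 0$ then $\mathcal D_\beta^\delta(x^{k_j})\to \tfrac12\|y^\delta\|_Y^2$, contradicting that $\mathcal D_\beta^\delta(x^k)$ is monotonically below $\mathcal D_\beta^\delta(x^0)<\tfrac12\|y^\delta\|_Y^2$ (which holds once we start from a good $x^0$, or can be arranged).

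With $x^*\neq 0$ in hand, I would pass to the limit in the defining relation. Write the iteration as a variational inequality via Lemma \ref{lemma2_7}: for every $w\in B_R$,
\[
\left\langle x^k+\frac{\beta x^{k+1}}{\lambda\|x^{k+1}\|_{\ell_2}}-\frac{1}{\lambda}A^*(Ax^k-y^\delta)-x^{k+1},\ w-x^{k+1}\right\rangle\le 0.
\]
Now restrict to the subsequence $k=k_j$. Since $x^{k_j}\to x^*$, $x^{k_j+1}\to x^*$, and $x^*\neq 0$, the map $x\mapsto x/\|x\|_{\ell_2}$ is continuous at $x^*$, so $x^{k_j+1}/\|x^{k_j+1}\|_{\ell_2}\to x^*/\|x^*\|_{\ell_2}$; all other terms converge by continuity of $A$, $A^*$ and the inner product. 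Taking $j\to\infty$ yields
\[
\left\langle x^*+\frac{\beta x^{*}}{\lambda\|x^{*}\|_{\ell_2}}-\frac{1}{\lambda}A^*(Ax^*-y^\delta)-x^{*},\ w-x^{*}\right\rangle\le 0\quad\forall\,w\in B_R,
\]
i.e.\ $\big\langle \tfrac{\beta x^*}{\|x^*\|_{\ell_2}}-A^*(Ax^*-y^\delta),\ w-x^*\big\rangle\le 0$ after multiplying by $\lambda>0$, which is exactly the asserted stationarity condition (this is also the fixed-point identity $x^*=\mathbb{P}_R\big(x^*+\tfrac{\beta x^*}{\lambda\|x^*\|_{\ell_2}}-\tfrac1\lambda A^*(Ax^*-y^\delta)\big)$ via Lemma \ref{lemma2_7}, consistent with the necessary optimality condition of Lemma \ref{lemma3_2}).

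The main obstacle is the step $x^*\neq 0$: the term $\beta x^{k+1}/(\lambda\|x^{k+1}\|_{\ell_2})$ is only defined and continuous away from the origin, so the whole limiting argument collapses if the subsequence drifts to $0$. I expect to handle this exactly as in Remark \ref{remark3_10} and Lemma \ref{lemma2_71}: use the monotone decrease of $\mathcal D_\beta^\delta$ from Lemma \ref{lemma3_9} together with the fact that $\mathcal D_\beta^\delta(0)=\tfrac12\|y^\delta\|_Y^2$ is strictly larger than $\mathcal D_\beta^\delta(x^k)$ for $k\ge 1$ (once the iteration has made genuine progress, which is the generic situation; the degenerate case $x^1=0$ is dealt with separately and trivially). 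A secondary, purely technical point is that the fixed-point inner loop producing $x^{k+1}$ from $x^k$ must actually return a nonzero solution of the implicit equation \eqref{4.8a}; this is guaranteed by Lemmas \ref{lemma3_3}, \ref{lemma3_7}, and \ref{lemma2_71} under Assumption \ref{assumption3_8}, so I would simply invoke those. Everything else is continuity bookkeeping.
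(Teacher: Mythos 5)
Your proposal matches the paper's proof essentially step for step: compactness of $B_R\subset\mathbb{R}^n$ gives the convergent subsequence, Lemma \ref{lemma2_7} turns the iteration into the variational inequality, Lemma \ref{lemma3_9} disposes of the $\langle x^{k_j}-x^{k_j+1},\,w-x^{k_j+1}\rangle$ term, and passing to the limit yields the stationarity condition via Lemma \ref{lemma3_2}. The only difference is that you explicitly argue $x^*\neq 0$ (which is genuinely needed for the continuity of $x\mapsto x/\|x\|_{\ell_2}$ at the limit point), a point the paper's proof leaves implicit, resting only on Remark \ref{remark3_10}; your monotonicity argument for this is a reasonable, and arguably necessary, supplement.
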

\begin{proof}
Since $\{x^k\}\subset B_R$ is bounded, $\{x^k\}$ has a subsequence $\{x^{k_j}\}$ converging
to an element $x^*$ in $B_R$, i.e.\ $x^{k_j}\rightarrow x^*$ in $B_R$. Since $A$ is linear and bounded, $A(x^{k_j})\rightarrow A(x^*)$. By Lemma \ref{lemma2_7} and the definition of $x^{k+1}$, we see that, for all $w\in B_R$,
\[ \left\langle x^k+\frac{\beta x^{k+1}}{\lambda\|x^{k+1}\|_{\ell_2}}-\frac{1}{\lambda}A^{*}(Ax^k-y^{\delta})-x^{k+1}, w-x^{k+1}\right\rangle\leq 0.\]
This implies that
\begin{equation}\label{equ1_89}
\left\langle x^{k_j}+\frac{\beta x^{k_j+1}}{\lambda\|x^{k_j+1}\|_{\ell_2}}-\frac{1}{\lambda}A^{*}(Ax^{k_j}-y^{\delta})-x^{k_j+1}, w-x^{k_j+1}\right\rangle\leq 0.
\end{equation}
Taking the limit in \eqref{equ1_89} as $j\to\infty$, we have
\begin{equation}\label{equ3_20}
\lim\limits_{j\rightarrow \infty}\left\langle x^{k_j}+\frac{\beta x^{{k_j}+1}}{\lambda\|x^{{k_j}+1}\|_{\ell_2}}-\frac{1}{\lambda}A^{*}(Ax^{k_j}-y^{\delta})-x^{{k_j}+1}, w-x^{{k_j}+1}\right\rangle\leq 0.
\end{equation}
Since $\|x^{k_j}-x^{k_j+1}\|_{\ell_2}\rightarrow 0$ as $j\rightarrow \infty$ and $\{w-x^{k_j+1}\}$ is uniformly bounded, we have
\begin{equation}\label{equ3_21}
\lim\limits_{j\rightarrow \infty}|\langle x^{k_j}-x^{k_j+1}, w-x^{k_j+1}\rangle|=0.
\end{equation}
A combination of \eqref{equ3_20} and \eqref{equ3_21} shows that
\begin{equation}\label{equ3_22}
\lim\limits_{j\rightarrow \infty}\left\langle \frac{\beta x^{{k_j}+1}}{\lambda\|x^{{k_j}+1}\|_{\ell_2}}-\frac{1}{\lambda}A^{*}(Ax^{k_j}-y^{\delta}), w-x^{{k_j}+1}\right\rangle\leq 0.
\end{equation}
Since $x^{k_j}\rightarrow x^*$, it follows from \eqref{equ3_22} that
\[ \left\langle \frac{\beta x^{*}}{\|x^{*}\|_{\ell_2}}-A^{*}(Ax^{*}-y^{\delta}), w-x^{*}\right\rangle\leq 0.\]
by Lemma \ref{lemma3_2}, $x^*$ is a stationary point of $\mathcal{D}_{\beta}^{\delta}(x)$ on $B_R$.
\end{proof}

\begin{remark}\label{remark1}
In this section, we restrict the analysis of the projected algorithm based on the surrogate function approach in the finite dimensional space $\mathbb{R}^n$.
Actually, all results except Lemma \ref{lemma2_71} and Theorem \ref{theorem3_12} can be extended to $\ell_2$ space.
In Theorem \ref{theorem3_12}, if $\{x^k\}$ is defined in $\ell_2$ space, then
$\{x^k\}$ has a weak convergence subsequence $\{x^{k_j}\}\rightharpoonup x^*$. However, the challenge of the proof is that $x^{k_j}\rightharpoonup x^*$ can not ensure
$x^{{k_j}+1}/\|x^{{k_j}+1}\|_{\ell_2}\rightharpoonup x^{*}/\|x^{*}\|_{\ell_2}$.
For example,
let $x_n=\bar{x}+e_n$, where $e_n=(\underbrace{0,\cdots,0,1}_n,0,\cdots)$. Since $e_n\rightharpoonup 0$ in $\ell_2$, $x_n\rightharpoonup x$ in $\ell_2$. However,
$\|x_n\|_{\ell_2}\nrightarrow\|x\|_{\ell_2}$. Hence, $x_n/\|x_n\|_{\ell_2}$ does not converge to $x^{*}/\|x^{*}\|_{\ell_2}$.
If we impose an additional condition on $\{x_n\}$, e.g.\ $\|x_n\|_{\ell_2}\rightarrow\|x\|_{\ell_2}$, then we have $x_n/\|x_n\|_{\ell_2}\rightharpoonup \eta x^{*}/\|x^{*}\|_{\ell_2}$. However, this condition is too restrictive, since a combination of $\|x_n\|_{\ell_2}\rightarrow\|x\|_{\ell_2}$ and $x_{n}\rightharpoonup x^*$ in $\ell_2$ implies that $x_n\rightarrow x^*$. Moreover, the iterative algorithm in this paper has an implicit formulation, and we need to compute the iterative solution. However, in $\ell_2$ space, we do not know whether the operator $\Phi(\hat{w})$ is weak-strong continuity. So we can not ensure that the fixed point iteration is convergent.
\end{remark}

%ssssssssssssssssssssssssssssssssssssssssssssssssssssssssssssssssssssssssssssssssssssssssssssssssssssssssssssssssssssssssssssssssssssssss5

%ssssssssssssssssssssssssssssssssssssssssssssssssssssssssssssssssssssssssssssssssssssssssssssssssssssssssssssssssssssssssssssssssssssssss6

\section{Numerical experiments}\label{sec6}

In this section, we present results from two numerical experiments to demonstrate the efficiency of
the proposed algorithms. Comparisons between ST-($\alpha\ell_{1}-\beta\ell_{2}$) and the two projected gradient algorithms are provided.
For convenience, we write PG-GCGM algorithm to refer to the first projected gradient algorithm which is based on GCGM, and
PG-SF algorithm for the second projected gradient algorithm which is based on the surrogate function approach. The relative error (Rerror) is utilized to measure the performance of the reconstruction $x^{*}$:
\[  \mathrm{Rerror}:=\displaystyle\frac{\|x^{*}-x^{\dag}\|_{\ell_2}}{\|x^{\dag}\|_{\ell_2}},   \]
where $x^{\dag}$ is a true solution.

We utilize the algorithm in \cite[Section 4.2]{BF08} to compute the projection defined in Definition \ref{def2_4}. The MATLAB code oneProjector.m regarding the $\ell_1$-ball projection can be obtained at http://www.cs.ubc.ca/labs/scl/spgl1.
The first example deals with a well-conditioned compressive sensing problem. The second example deals
with an ill-conditioned image deblurring problem. All numerical experiments were tested in MATLAB R2010 on an i7-6500U 2.50GHz workstation with 8Gb RAM.

\subsection{Example 1: Compressive sensing}

In the first example, we test compressive sensing with the commonly used random Gaussian matrix. The compressive sensing problem
is defined as $A_{m\times n}x_n=y_m$, where $A_{m\times n}$ is a well conditioned random Gaussian matrix
by calling $\mathrm{A=randn(m,n)}$ in MATLAB. Exact data $y^{\dag}$ is generated by $y^{\dag}=Ax^{\dag}$.
The exact solution $x^{\dag}$ is an $s$-sparse signal supported on a random index set. White Gaussian noise
is added to the exact data $y^{\dag}$ by calling $\mathrm{y^{\delta}=awgn(Ax^{\dag}, \sigma})$ in MATLAB,
where $\sigma$ (measured in dB) measures the ratio between the true (noise free) data $y^{\dag}$ or $Ax^{\dag}$ and Gaussian noise. A larger value of $\sigma$ corresponds to a smaller value of the noise level $\delta$, where the noise level $\delta$ is defined by $\delta=\|y^{\delta}-y^{\dag}\|_{2}$. $x^{*}$ denotes the reconstruction computed by
the proposed algorithms.
For compressive sensing, if the value of $\|(A^*A)_{n\times n}\|_2$ is greater than 1, we rescale the matrix $A_{m\times n}$ by
$A_{m\times n}\rightarrow c*A_{m\times n}$, where $c<1$. Then the original compressive sensing problem $A_{m\times n}x_n=y_m$ can be rewritten as $(c*A_{m\times n})x_n=c*y_m$.
Note that the condition number does not change under the matrix rescaling.  To compare the performance of ST-($\alpha\ell_{1}-\beta\ell_{2}$) algorithm, PG-GCGM algorithm and PG-SF algorithm, we choose the same initial setting, i.e. $\lambda$, $\beta$ and the initial vector $x^0$. Moreover, for each fixed point iteration in PG-SF algorithm, we choose $x^0=\mathrm{ones}(n,1)$ as the initial vector.

We choose $n = 200$, $m = 0.4n$, $s = 0.2m$, then $\|x^{\dag}\|_{0}=16$. A noise $\delta$ is added to exact data $y^{\dag}$ by calling
$\mathrm{y^{\delta}=awgn(Ax^{\dag}, \sigma})$, where $\mathrm{\sigma}=50\mathrm{dB}$, $\delta$ is around 0.02. We let $\lambda = 1$, $\eta=1$, $\alpha=O(\delta)=0.2$, $\beta=\alpha\eta=0.2$ and the initial vector $x^0$ is generated by calling $x^0=0.01\mathrm{ones}(n,1)$. We utilize discrepancy principle \eqref{equ2_2} to determine the radius $R$ of the $\ell_1$-ball constraint such that $R=\sup\{R>0 \mid \delta\leq \|Ax^*-y^{\delta}\|_2\}$. It is shown that when a good estimate for the noise level $\delta$ is known, this method yields a good radius $R$. According to the priori information of $x^{\dag}$, we choose an initial value of $R$ and compute $x^*$.
 If $\delta< \|Ax^*-y^{\delta}\|_2$, we try $R_j=R+j$, $j=1, 2, \cdots$ until $\|Ax^*-y^{\delta}\|_2\leq \delta$. With $j$ increasing, we can find
 $R=\sup\{R>0 \mid \delta\leq \|Ax^{*}-y^{\delta}\|_2\}$. On the contrary, for any initial $R$, if $\|Ax^*-y^{\delta}\|_2\leq \delta$, we try $R_j=R-j$, $j=1, 2, \cdots$ until $\delta< \|Ax^*-y^{\delta}\|_2$. Fig.\ \ref{fig1} shows Morozov's discrepancy principle for determining the radius $R$. We see that the discrepancy $\|Ax^*-y^{\delta}\|_2$ is a decreasing function of the radius $R$. According the strategy stated above, $R$ should be chosen such that $R=\sup\{R>0 \mid \delta< \|Ax^*-y^{\delta}\|_2\}$. It is obvious that $R$ should be chosen as 16. Indeed, by ST-($\alpha\ell_{1}-\beta\ell_{2}$) algorithm, we can obtain $\|x^*\|_1=16.0153$. Thus the experimental results  confirm that the strategy proposed in this paper is feasible and they match the theoretical results stated in Subsection \ref{subsec4}, i.e.\ $R$ should be chosen by $R=\|x^*\|_1$.

%ffffffffffffffffffffffffffffffffffffffffffffffffffffffffffffffffffffffffffffffffffffffffffffffffffffff
\begin{figure}[tbhp]
\centering
\subfigure[PG-GCGM algorithm]{\includegraphics[width=80mm,height=60mm]{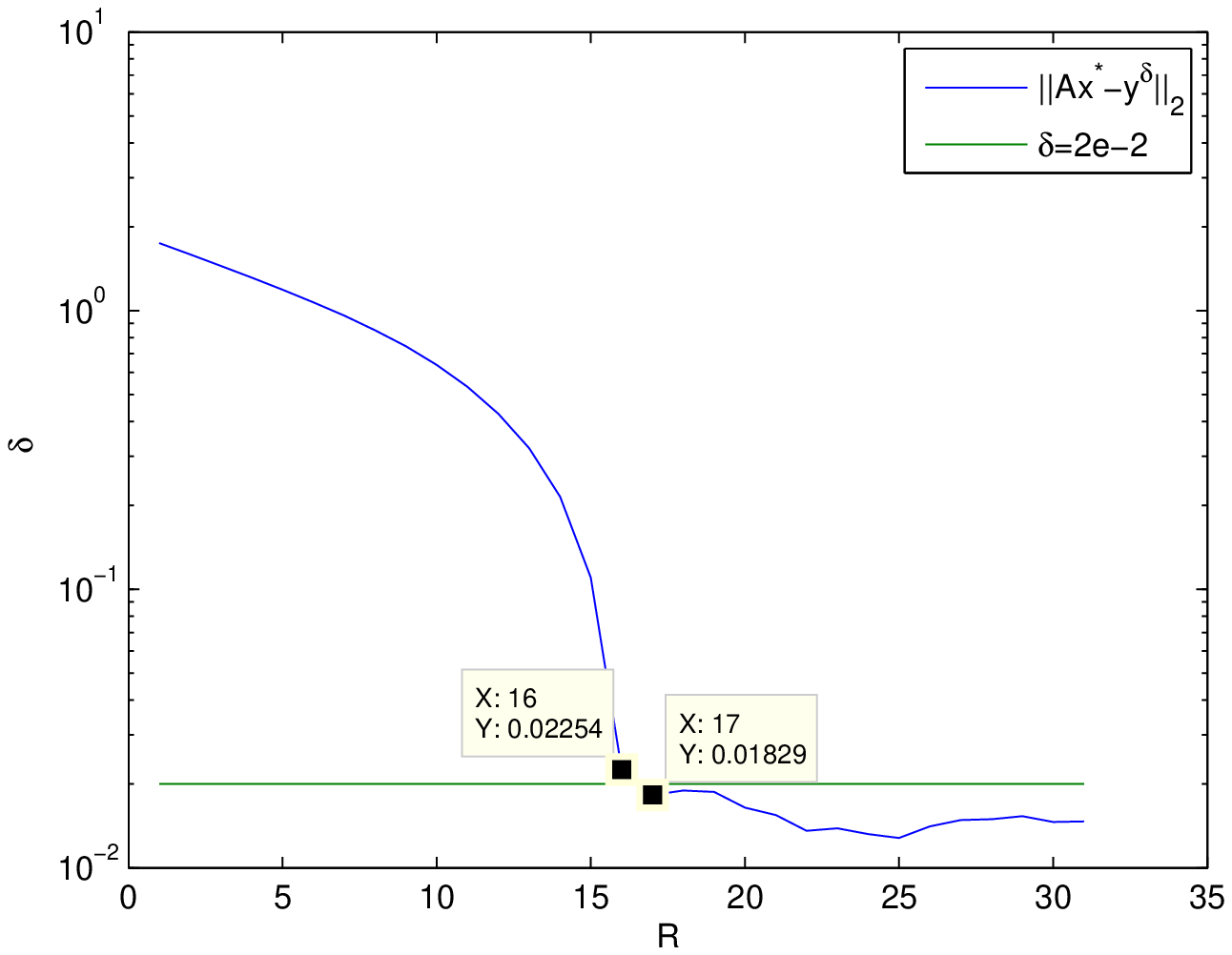}}
\subfigure[PG-SF algorithm]{\includegraphics[width=80mm,height=60mm]{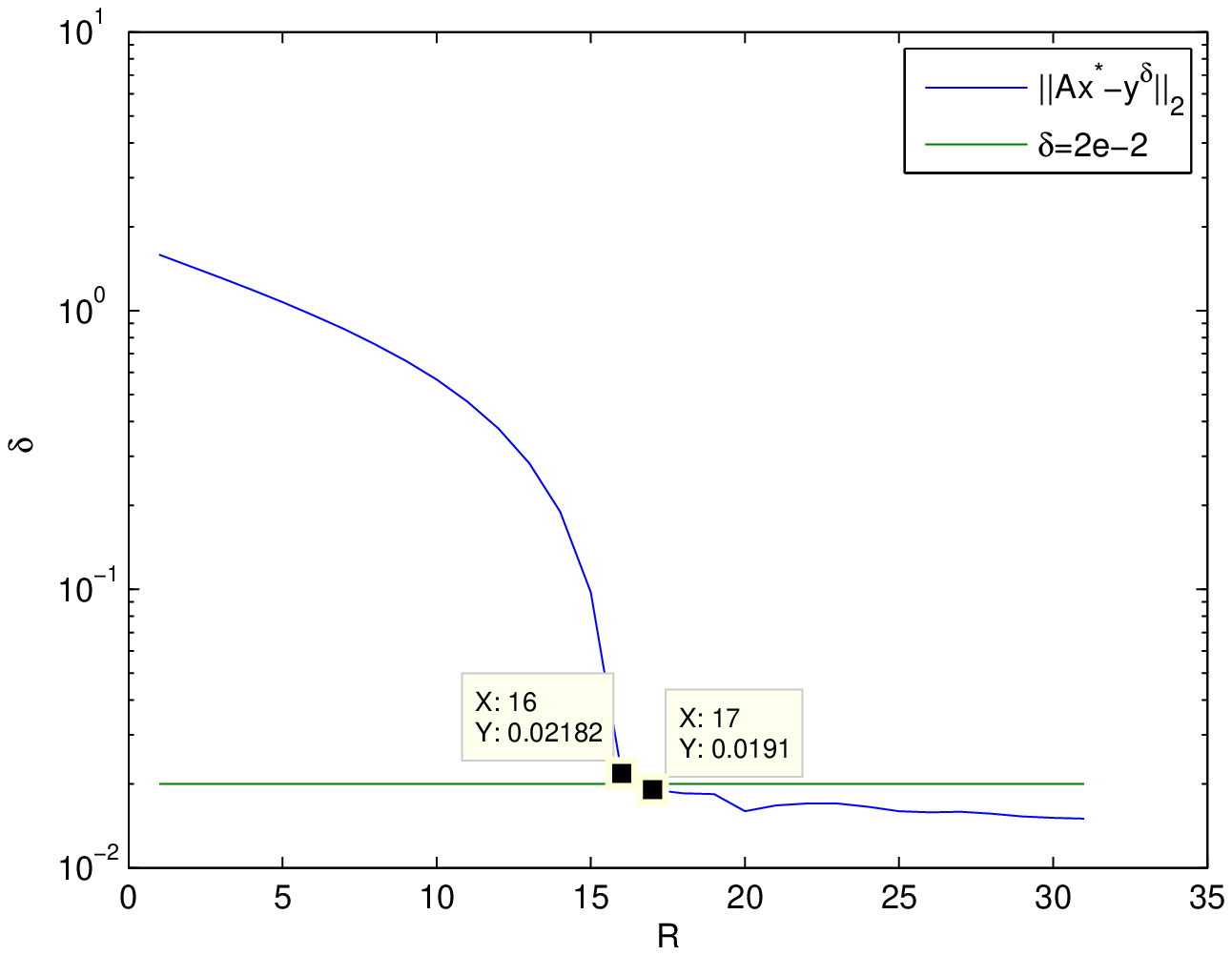}}
\caption{The discrepancy $\|Ax^*-y^{\delta}\|_2$ vs.\ $R$.}
\label{fig1}
\end{figure}
%ffffffffffffffffffffffffffffffffffffffffffffffffffffffffffffffffffffffffffffffffffffffffffffffffffffff

To test the stability of the PG Algorithms with respect to $R$, we choose several values of $R$ in Fig.\ \ref{fig2}. It is shown that the two PG algorithms have good performance with the appropriate radius $R$.
We see that the two PG algorithms are stable with respect to $R$. Furthermore, the results of
reconstruction get better if $R$ close to 16.

%ffffffffffffffffffffffffffffffffffffffffffffffffffffffffffffffffffffffffffffffffffffffffffffffffffffff
\begin{figure}[tbhp]
\centering
\subfigure[PG-GCGM algorithm]{\includegraphics[width=80mm,height=60mm]{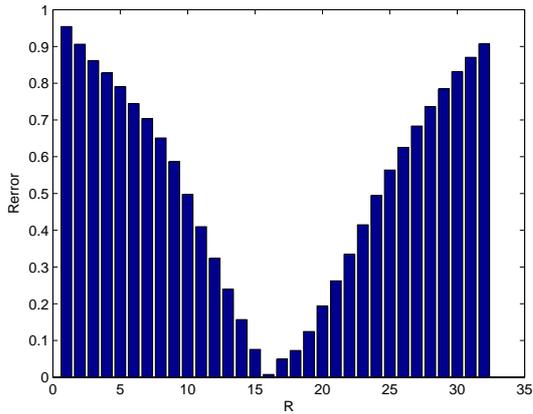}}
\subfigure[PG-SF algorithm]{\includegraphics[width=80mm,height=60mm]{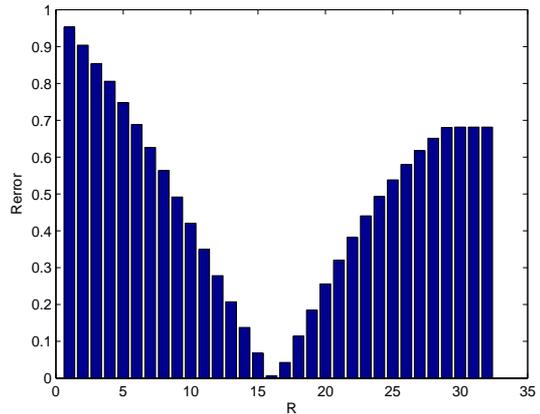}}
\caption{The relative error of reconstruction $x^*$ by the two PG algorithms with different $R$.}
\label{fig2}
\end{figure}
%ffffffffffffffffffffffffffffffffffffffffffffffffffffffffffffffffffffffffffffffffffffffffffffffffffffff

When $0<\eta\leq 1$, $\mathcal{R}_{\eta}(x)$ is non-convex. To analyze the influence of $\eta$, we choose different values for the parameter $\eta$.
From each row in Table \ref{tab1}, we see that, Rerror of
reconstruction gets better with $\eta$ increasing which implies the non-convex regularization (case $\eta>0$)
has better performance compared to the classical $\ell_1$ regularization (case $\eta=0$).

%tttttttttttttttttttttttttttttttttttttttttttttttttttttttttttttttttttttttttttttttttttttttttttttttttttttttttttttttttttttttttttttt
\begin{table}[tbhp]
\caption{Rerror of reconstruction $x^{*}$ with different values of $\eta$.}
\label{tab1}
\begin{tabular}{cccccccccc}\hline
%&&&SNR&&&\\
%\cline{2-7}
$\displaystyle \eta$& 0.0& 0.1& 0.2& 0.3& 0.4& 0.5& 0.7& 0.9&1.0\\
\hline
ST-($\alpha\ell_{1}-\beta\ell_{2}$)&0.0250& 0.0246& 0.0147&0.0098&0.0086&0.0081&0.0073&0.0067&0.0064\\
PG-GCGM&0.0180& 0.0126& 0.0102&0.0089&0.0081&0.0074&0.0067&0.0061&0.0059\\
PG-SF&0.0356& 0.0285& 0.0197&0.0145&0.0121&0.0111&0.0096&0.0091&0.0089\\
\hline
\end{tabular}
\end{table}
%ttttttttttttttttttttttttttttttttttttttttttttttttttttttttttttttttttttttttttttttttttttttttttttttttttttttttttttttttttttttttttttttt

We test the convergence rate of the two PG algorithms and the ST-($\alpha\ell_{1}-\beta\ell_{2}$) algorithm. We are primarily interested in the time of computation corresponding to Rerror. The results are shown in Fig.\ \ref{fig3}. To get within a distance of
the true minimizer corresponding to a 7e-3 relative error, PG-GCGM algorithm takes 0.62 second, PG-SF algorithm 1.08 seconds, and ST-($\alpha\ell_{1}-\beta\ell_{2}$) algorithm 18.40 seconds. The ST-($\alpha\ell_{1}-\beta\ell_{2}$) algorithm procedure is significantly slower than the two PG algorithms.

%ffffffffffffffffffffffffffffffffffffffffffffffffffffffffffffffffffffffffffffffffffffffffffffffffffffff
\begin{figure}[tbhp]
\centering
\subfigure[PG-GCGM algorithm and PG-SF algorithm]{\includegraphics[width=80mm,height=60mm]{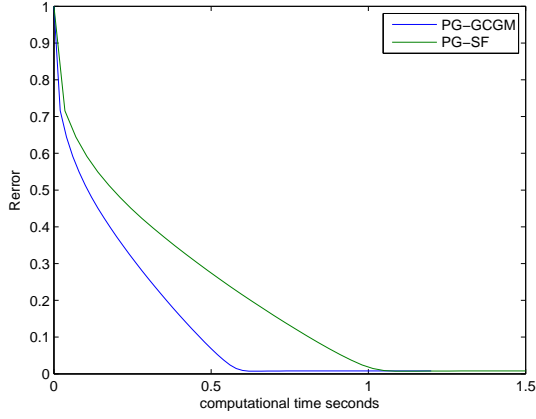}}
\subfigure[ST-($\alpha\ell_{1}-\beta\ell_{2}$) algorithm]{\includegraphics[width=80mm,height=60mm]{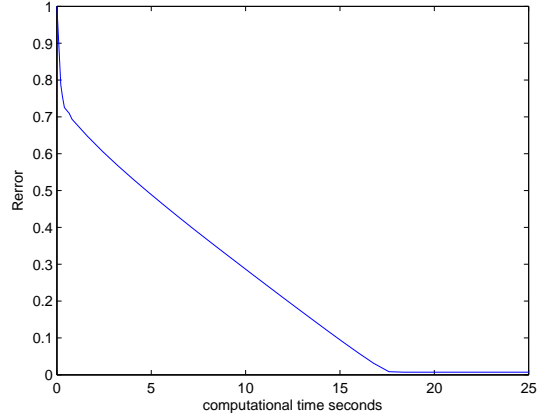}}
\caption{(a) Convergence rate of PG-GCGM algorithm and PG-SF algorithm; (b) Convergence rate of ST-($\alpha\ell_{1}-\beta\ell_{2}$) algorithm.}
\label{fig3}
\end{figure}
%ffffffffffffffffffffffffffffffffffffffffffffffffffffffffffffffffffffffffffffffffffffffffffffffffffffff

Theoretically, for the PG-SF algorithm, we require that Assumption \ref{assumption3_8} (A2) holds, i.e.\ $\lambda\geq \beta\max\{ {\rm eig}(x^k)\}$. Next, we test whether $\lambda$ satisfies this assumption. Fig.\ \ref{fig4}\,(a) shows Rerror corresponding to the different reconstruction $x^k$, $1\le k\le 1500$ and Fig.\ \ref{fig4}\,(b) shows the maximal eigenvalues $\max\{{\rm eig} (x^k)\}$. It is obvious that all $\max\{{\rm eig}(x^k)\}$ are less than 3.5. In this section, we let $\lambda=1$ and $\beta=\alpha\eta$, where $\alpha=0.02$ and $\eta=1$. Thus, $\lambda\geq 3.5\beta$, which satisfies Assumption \ref{assumption3_8} (A2). Theoretically, we can let $\lambda$ be any value greater than $3.5\beta$. Nevertheless, a larger value of $\lambda$ corresponds to a smaller iteration step, and then we can not obtain a good convergence rate.

%ffffffffffffffffffffffffffffffffffffffffffffffffffffffffffffffffffffffffffffffffffffffffffffffffffffff
\begin{figure}[tbhp]
\centering
\subfigure[]{\includegraphics[width=80mm,height=60mm]{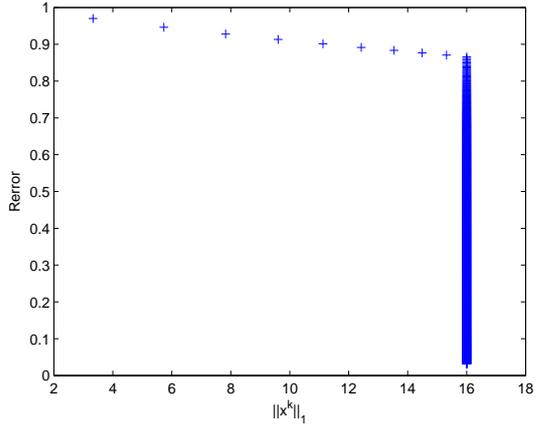}}
\subfigure[]{\includegraphics[width=80mm,height=60mm]{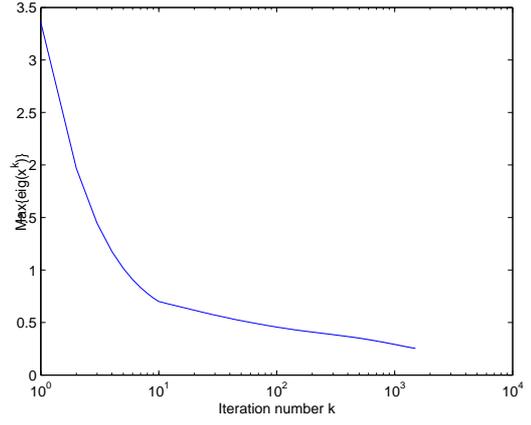}}
\caption{(a) Rerror for $x^k$, $1\le k\le 1500$; (b) $\max\{{\rm eig}(x^k)\}$ for $1\le k\le 1500$.}
\label{fig4}
\end{figure}
%ffffffffffffffffffffffffffffffffffffffffffffffffffffffffffffffffffffffffffffffffffffffffffffffffffffff

Finally, we let $n=1800$, $m=0.4n$ and $s=0.2m$. $\sigma=50 \rm dB$. The coefficients $\lambda$ and $\eta$ remain the same as in the first test. The noise level $\delta$ is around 0.09, hence we let $\beta=0.1$. We test the convergence rate of the two PG algorithms and ST-($\alpha\ell_{1}-\beta\ell_{2}$) algorithm regarding computational time with several different values of Rerror. With the value of Rerror decreasing, when Rerror gets within each value, we check the computational time of the three algorithms.
In Table \ref{tab2}, we see that the ST-($\alpha\ell_{1}-\beta\ell_{2}$) algorithm takes more than 100 minutes to get within a distance of
the true minimizer corresponding to a 2\% relative error. The two PG algorithms only take around 8 and 41 seconds to reach the same
level of relative error. The PG algorithms converge much faster than the ST-($\alpha\ell_{1}-\beta\ell_{2}$) algorithm.

%tttttttttttttttttttttttttttttttttttttttttttttttttttttttttttttttttttttttttttttttttttttttttttttttttttttttttttttttttttttttttttttt
\begin{table}[tbhp]
\caption{Time of computation for the reconstruction $x^{*}$ with different values of Rerror.}
\label{tab2}
\begin{tabular}{cccc}\hline
%&&&SNR&&&\\
%\cline{2-7}
Rerror & ST-($\alpha\ell_{1}-\beta\ell_{2}$) time&PG-GCGM time& PG-SF time\\
\hline
0.8&9.7463 m& 0.0214 s& 0.0208 s\\
0.6&12.7113 m& 0.1926 s& 0.8573 s\\
0.4&14.9283 m& 0.6995 s& 3.2097 s\\
0.2&24.8903 m& 1.6924 s& 7.5099 s\\
0.1&39.2569 m& 2.8578 s& 11.1562 s\\
0.05&60.5784 m& 4.9201 s& 22.2830 s\\
0.02&102.8623 m& 8.2870 s& 41.2480 s\\
\hline
\end{tabular}
\end{table}
%ttttttttttttttttttttttttttttttttttttttttttttttttttttttttttttttttttttttttttttttttttttttttttttttttttttttttttttttttttttttttttttttt

\subsection{Example 2: Image deblurring}

In the second example, we test an ill-conditioned image deblurring problem which is the process of removing blurring artifacts from images, such as blur caused by defocus aberration or motion blur. The blur is typically modeled by a Fredholm integral equation of the first kind
\[ \int_a^b K(s,t)\,f(t)\,dt=g(s),\]
where $K(s,t)$ is the kernel function, $g(s)$ is the observed image and $f(t)$ is the true image. We utilize the blur problem from MATLAB Regularization Tools
(\cite{H07}) by calling $[A,b,x^{\dag}]=\mathrm{blur}(n,band,\tau)$, where the Gaussian point-spread function is used as the kernel function
\[ K(s,t)=\frac{1}{\pi \tau^2}\mathrm{exp}\left(-\frac{s^2+t^2}{2\tau^2}\right).\]
The matrix $A$ is a symmetric $n^2\times n^2$ Toeplitz matrix and is given by $A=(2\pi \tau^2)^{-1}T\otimes T$,
where $T$ is an $n\times n$ symmetric banded Toeplitz
matrix whose first row is obtained by calling
\[z=[\mathrm{exp}(-([0:band-1].{\rm\hat{\phantom{a}}}2)/(2\tau{\rm\hat{\phantom{a}}}2));\mathrm{zeros}(1,N-band)].\]
The parameter $\tau$ controls the shape of the Gaussian point spread function and thus
the amount of smoothing (the larger the value of $\tau$, the wider the function, and
the less ill-posed the problem).

We choose $n=64$, $band=3$, $\tau=0.7$. A noise $\delta$ is added to exact data $y^{\dag}$ by calling
$\mathrm{y^{\delta}=awgn(Ax^{\dag}, \sigma})$, where $\mathrm{\sigma}=50\mathrm{dB}$, $\delta$ is around 0.2. We let $\lambda = 5$, $\eta=0.7$, $\alpha=O(\delta)=0.2$, $\beta=\alpha\eta=0.14$ and generate the initial vector $x^0$ by calling $x^0=0.01\mathrm{ones}(n,1)$. The value of $\|A\|_2$ is
around 1 and the condition number is around 30.
The initial value $x^0$ is generated by calling $x^0=0.01\mathrm{ones}(n\times n,1)$.
Fig.\ \ref{fig5} shows Morozov's discrepancy principle for determining the radius $R$. We see that the value of the discrepancy $\|Ax^*-y^{\delta}\|_2$ decreases with increasing radius $R$. According to the strategy stated previously, $R$ should be chosen such that $R=\sup\{R>0 \mid \delta< \|Ax^*-y^{\delta}\|_2\}$. It is obvious that $R$ should be chosen as 2107. Actually, the optimal $R$ is 2108 (see Fig.\ \ref{fig6}), thus the results of the experiment testify the theory, i.e.\ $R$ should be chosen by $R=\|x^*\|_1$. Note that $\|x^{\dag}\|_1=2111$. Fig.\ 6 shows the performance of the PG algorithms with respect to $R$. It is shown that the two PG algorithms have good performance with appropriate radius $R$.
Observe that for a fixed parameter $\eta$, Rerror of reconstruction $x^*$ gets better if $R$ close to 2107.

%ffffffffffffffffffffffffffffffffffffffffffffffffffffffffffffffffffffffffffffffffffffffffffffffffffffff
\begin{figure}[tbhp]
\centering
\subfigure[PG-GCGM algorithm]{\includegraphics[width=80mm,height=60mm]{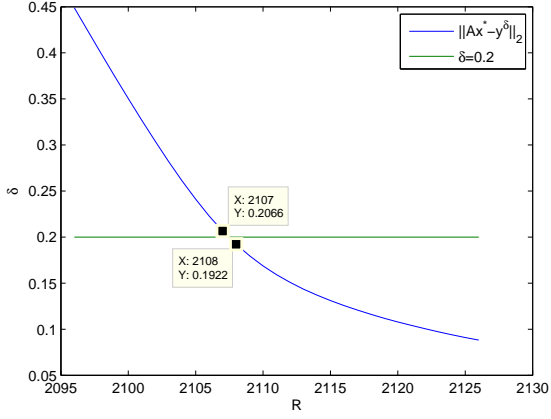}}
\subfigure[PG-SF algorithm]{\includegraphics[width=80mm,height=60mm]{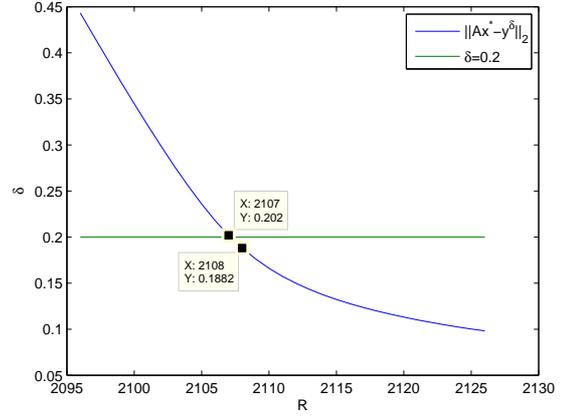}}
\caption{The value of the discrepancy $\|Ax^*-y^{\delta}\|_2$ with different $R$.}
\label{fig5}
\end{figure}
%ffffffffffffffffffffffffffffffffffffffffffffffffffffffffffffffffffffffffffffffffffffffffffffffffffffff

%ffffffffffffffffffffffffffffffffffffffffffffffffffffffffffffffffffffffffffffffffffffffffffffffffffffff
\begin{figure}[tbhp]
\centering
\subfigure[PG-GCGM algorithm]{\includegraphics[width=80mm,height=60mm]{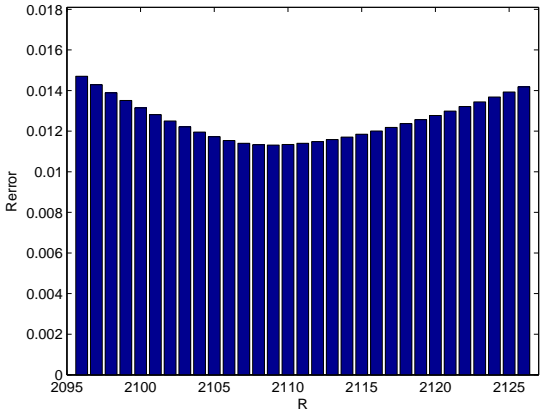}}
\subfigure[PG-SF algorithm]{\includegraphics[width=80mm,height=60mm]{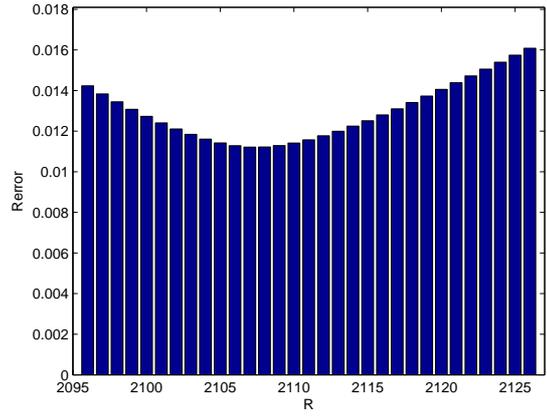}}
\caption{The relative error of reconstruction $x^*$ by the two PG algorithms with different $R$.}
\label{fig6}
\end{figure}
%ffffffffffffffffffffffffffffffffffffffffffffffffffffffffffffffffffffffffffffffffffffffffffffffffffffff

To analyze the influence of $\eta$, we choose different values for the parameter $\eta$.
From each row in Table \ref{tab3}, we see that the results of
reconstruction get better with $\eta$ increasing, implying that the non-convex regularization (for $\eta>0$)
has better performance than the classical $\ell_1$ regularization (for $\eta=0$). However, if $\eta$
increases to near 1, the accuracy of recovery decreases and $\eta=0.7$ is optimal.

%tttttttttttttttttttttttttttttttttttttttttttttttttttttttttttttttttttttttttttttttttttttttttttttttttttttttttttttttttttttttttttttt
\begin{table}[tbhp]
\caption{Rerror of reconstruction $x^{*}$ with different values of $\eta$.}
\label{tab3}
\begin{tabular}{cccccccccc}\hline
%&&&SNR&&&\\
%\cline{2-7}
$\displaystyle \eta$& 0.0& 0.1& 0.2& 0.3& 0.4& 0.5& 0.7& 0.9&1.0\\
\hline
ST-($\alpha\ell_{1}-\beta\ell_{2}$)&0.0265& 0.0253& 0.0231&0.0205&0.0163&0.0144&0.0125&0.0138&0.0198\\
PG-GCGM&0.0278& 0.0263& 0.0242&0.0225&0.0198&0.0162&0.0130&0.0152&0.0205\\
PG-SF&0.0296& 0.0271& 0.0237&0.0231&0.0204&0.0156&0.0126&0.0147&0.0203\\
\hline
\end{tabular}
\end{table}
%ttttttttttttttttttttttttttttttttttttttttttttttttttttttttttttttttttttttttttttttttttttttttttttttttttttttttttttttttttttttttttttttt

We test the convergence rate of the two PG algorithms and the ST-($\alpha\ell_{1}-\beta\ell_{2}$) algorithm, focusing on the
computation time corresponding to Rerror. The results are shown in Fig.\ \ref{fig7}. To get within a distance of
the true minimizer corresponding to a 1.2e-2 relative error, the PG-GCGM algorithm takes 10.12 seconds, PG-SF algorithm 36.26 seconds,
and the ST-($\alpha\ell_{1}-\beta\ell_{2}$) algorithm 58.54 minutes. The ST-($\alpha\ell_{1}-\beta\ell_{2}$) algorithm procedure is significantly slower than the two PG algorithms.

%ffffffffffffffffffffffffffffffffffffffffffffffffffffffffffffffffffffffffffffffffffffffffffffffffffffff
\begin{figure}[tbhp]
\centering
\subfigure[PG-GCGM algorithm and PG-SF algorithm]{\includegraphics[width=80mm,height=60mm]{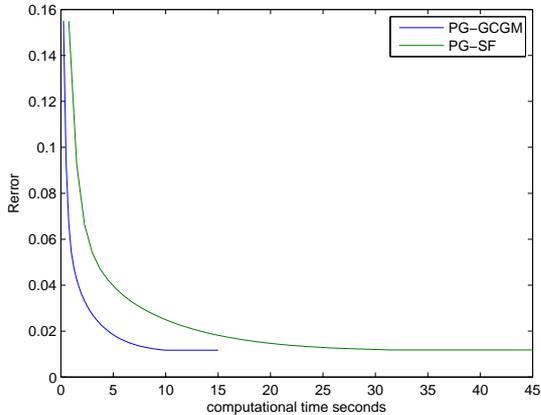}}
\subfigure[ST-($\alpha\ell_{1}-\beta\ell_{2}$) algorithm]{\includegraphics[width=80mm,height=60mm]{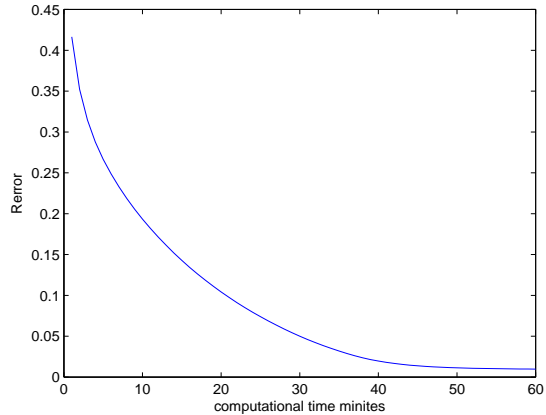}}
\caption{(a) Convergence rate of PG-GCGM algorithm and PG-SF algorithm; (b) Convergence rate of ST-($\alpha\ell_{1}-\beta\ell_{2}$) algorithm.}
\label{fig7}
\end{figure}
%ffffffffffffffffffffffffffffffffffffffffffffffffffffffffffffffffffffffffffffffffffffffffffffffffffffff

Theoretically, for the PG-SF algorithm, we require that Assumption \ref{assumption3_8} (A2) holds, i.e.\ $\lambda\geq \beta\max\{ {\rm eig}(x^k)\}$. In Fig.\ \ref{fig8}, we test whether $\lambda$ satisfies this assumption. Fig.\ \ref{fig8}\,(a) shows Rerror corresponding to the different reconstruction $x^k$ and Fig.\ \ref{fig8}\,(b) shows the maximal eigenvalue $\max\{{\rm eig} (x^k)\}$. It is obvious that the maximal eigenvalue of all $x^k$ is less than 0.45. We let $\lambda=1$ and $\beta=\alpha\eta=0.14$, where $\alpha=0.2$ and $\eta=0.7$. Thus, $\lambda\geq 3.5\beta$, and Assumption \ref{assumption3_8} (A2) is satisfied.

%ffffffffffffffffffffffffffffffffffffffffffffffffffffffffffffffffffffffffffffffffffffffffffffffffffffff
\begin{figure}[tbhp]
\centering
\subfigure[]{\includegraphics[width=80mm,height=60mm]{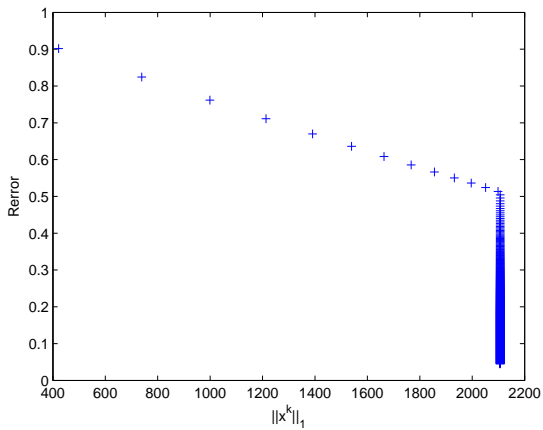}}
\subfigure[]{\includegraphics[width=80mm,height=60mm]{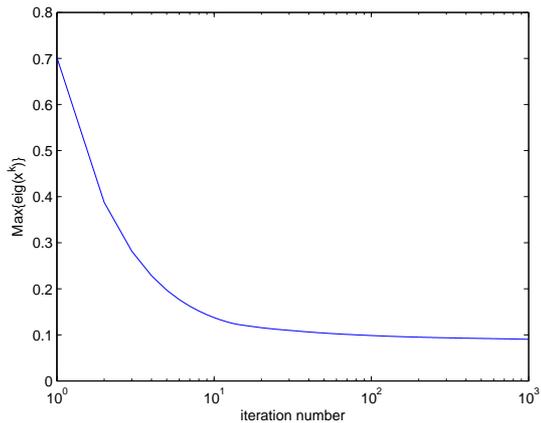}}
\caption{(a) Rerror for $x^k$, $1\le k\le 1000$; (b) $\max\{{\rm eig}(x^k)\}$ for $1\le k\le 1000$.}
\label{fig8}
\end{figure}
%ffffffffffffffffffffffffffffffffffffffffffffffffffffffffffffffffffffffffffffffffffffffffffffffffffffff

%bbbbbbbbbbbbbbbbbbbbbbbbbbbbbbbbbbbbbbbbbbbbbbbbbbbbbbbbbbbbbbbbbbbbbbbbbbbbbbbbbbbbbbbbbbbbbbbbbbbbbbbbbbbbbbbbbbbbbbbbbbbbbbbbbbbbbbbbbbbbbbbb


\begin{thebibliography}{99}

%\bibitem{AM09}
%Amir B and Marc T. \emph{A fast iterative shrinkage-thresholding algorithm for linear inverse problems}.
%SIAM Journal on Imaging Sciences, 2009, 2(1): 183--202.
%
%\bibitem{AR09}
%Anzengruber S T and Ramlau R. \emph{Morozov's discrepancy principle for Tikhonov type functionals with nonlinear operators}.
%Inverse Problems, 2009, 26(2): 025001.
\bibitem{AR10}
Anzengruber S W and Ramlau R. \emph{Morozov's discrepancy principle for Tikhonov-type functionals with nonlinear operators}. Inverse Problems, 2010, 26: 025001.

\bibitem{BT09}
Beck A and Teboulle M. \emph{A fast iterative shrinkage-thresholding algorithm for linear inverse problems}. SIAM Journal on Imaging Sciences, 2009, 2:  183--202.

\bibitem{BBC11}
Becker S, Bobin J and Candès E J. \emph{NESTA: A fast and accurate first-order method for sparse recovery}. SIAM Journal on Imaging Sciences, 2011, 4: 1--39.

\bibitem{BB18}
Benning M and Burger M. \emph{Modern regularization methods for inverse problems}. Acta Numerica, 2018, 1--111.


\bibitem{BF08}
van den Berg E and Friedlander M P. \emph{Probing the pareto frontier for basis pursuit solutions}. SIAM Journal on Scientific Computing, 2008, 31: 890--912.

%\bibitem{BBLM07}
%Bonesky T, Bredies K, Lorenz D A, and Maass P. \emph{A generalized conditional gradient method for
%nonlinear operator equations with sparsity constraints}. Inverse Problems, 2007, 23(5): 2041--2058.

\bibitem{BD08}
Blumensath T and Davies M E. \emph{Iterative thresholding for sparse approximations}. Journal of Fourier Analysis and Applications, 2008, 14: 629--654.

\bibitem{BD09}
Blumensath T and Davies M E. \emph{Iterative hard thresholding for compressed sensing}. Applied and Computational Harmonic Analysis, 2009, 27: 265--274.

%\bibitem{BFH13}
%Burger M, Flemming J, and Hofmann B. \emph{Convergence rates in $\ell_1$-regularization if the sparsity assumption fails}.
%Inverse Problems, 2013, 29(2): 025013.
\bibitem{B09}
Bonesky T. \emph{Morozov's discrepancy principle and Tikhonov-type
functionals}. Inverse Problems, 2009, 25: 015015.



\bibitem{BL08}
Bredies K and Lorenz D A. \emph{Iterated hard shrinkage for minimization problems with sparsity constraints}.
SIAM Journal on Scientific Computing, 2008, 30: 657--683.

%\bibitem{BL09}
%Bredies K and Lorenz D A. \emph{Regularization with non-convex separable constraints}.
%Inverse Problems, 2009, 25(8): 085011.
%
%\bibitem{BLM09}
%Bredies K, Lorenz D A, and Maass P. \emph{A generalized conditional gradient method and its connection
%to an iterative shrinkage method}. Computational Optimization and Applications, 2009, 42(2): 173--193.
%
%\bibitem{BLR15}
%Bredies K, Lorenz D, and Reiterer S. \emph{Minimization of non-smooth, non-convex functionals by
%iterative thresholding}. J Optim Theory Appl, 2015, 165: 78--112.





%\bibitem{C2000}
%Carothers N L. \emph{Real Analysis}. New York: Cambridge University Press. 2000.

\bibitem{CD15}
Chambolle A and Dossal C. \emph{On the convergence of the iterates of the ``fast iterative shrinkage/thresholding algorithm"}. Journal of Optimization Theory and Applications, 2015, 166: 968--982.

%\bibitem{CHZ17}
%Chen D, Hofmann B, and Zou J. \emph{Elastic-net regularization versus $\ell_1$-regularization for
%linear inverse problems with quasi-sparse solutions}. Inverse Problems, 2017, 33: 015004.

\bibitem{DC15}
O'Donoghue B and Cand\`{e}s E. \emph{Adaptive restart for accelerated gradient schemes}. Foundations of Computational Mathematics, 2015, 15: 715--732.

%\bibitem{DDD03}
%Daubechies I, Defrise M and De Mol C. \emph {An iterative thresholding algorithm for linear inverse problems with a sparsity constraint}. 2003, arXiv:math/0307152

\bibitem{DDD04}
Daubechies I, Defrise M and De Mol C. \emph{An iterative thresholding algorithm for linear inverse
problems with a sparsity constraint}. Communications on Pure and Applied Mathematics, 2004, 57(11): 1413--1457.

\bibitem{DDD16}
Daubechies I, Defrise M, and De Mol C. \emph{Sparsity-enforcing regularisation and ISTA
revisited}. Inverse Problems, 2016, 32: 104001.

\bibitem{DFL08}
Daubechies I, Fornasier M and Loris I. \emph{Accelerated projected gradient method for linear inverse problems with sparsity constraints}. Journal of Fourier Analysis and Applications, 2008, 14: 764--792.

\bibitem{DH19}
Ding L and Han W. \emph{$\alpha\ell_{1}-\beta\ell_{2}$ regularization for sparse recovery}, Inverse Problems, 2019, 35: 125009.

\bibitem{EHN1996}
Engl H W, Hanke M, and Neubauer A. \emph{Regularization of Inverse Problems}. Mathematics and its Applications
vol 375: Dordrecht: Kluwer, 1996.

%\bibitem{ELX13}
%Esser E, Lou Y and Xin J. \emph{A method for finding structured sparse solutions to non-negative
%least squares problems with applications}. SIAM J. Imaging Sci., 2013, 6: 2010--2046.

\bibitem{FNW07}
Figueiredo M, Nowak R and Wright S. \emph{Gradient projection for sparse reconstruction Application to compressed sensing and other inverse problems}. IEEE Journal of Selected Topics in Signal Processing, 2007, 1: 586--597.

\bibitem{F2010}
Fornasier M, eds. \emph{Theoretical Foundations and Numerical Methods for Sparse Recovery}. De Gruyter, 2010.

\bibitem{FPRW16}
Fornasier M, Peter S, Rauhut H and Worm S. \emph{Conjugate gradient acceleration of iteratively re-weighted least squares methods}. Computational Optimization and Applications, 2016, 35: 205--259.

\bibitem{FR08}
Fornasier M and Rauhut H. \emph{Iterative thresholding algorithms}. Applied and Computational Harmonic Analysis, 2008, 25: 187--208.

\bibitem{GWC18}
Ge H, Wen J and Chen W. 2018 \emph{The null space property of the truncated $\ell_{1-2}$-minimization}.
IEEE Signal Process. Lett., 2018, 8: 1261--1265.
%\bibitem{GOY2016}
%Glowinski R, Osher S, and Yin W, eds. \emph{Splitting Methods in Communication, Imaging, Science,
%and Engineering}. Springer, 2016.
%
%\bibitem{G09}
%Grasmair M. \emph{Well-posedness and convergence rates for sparse regularization with sublinear $\ell^q$
%penalty term}. Inverse Problems $\&$ Imaging, 2009, 3(3): 383--387.
%
%\bibitem{G10}
%Grasmair M. \emph{Non-convex sparse regularisation}. Journal of Mathematical Analysis and Applications,
%2010, 365(1): 19--28.
%
%\bibitem{GM10}
%Grasmair M. \emph{Generalized Bregman distances and convergence rates for non-convex regularization methods}.
%Inverse Problems, 2010, 26: 115014.
%
%\bibitem{GHS08}
%Grasmair M, Haltmeier M, and Scherzer O. \emph{Sparse regularization with $\ell^q$ penalty term}.
%Inverse Problems, 2008, 24: 055020.

\bibitem{H07}
Hansen P C. \emph{Regularization Tools Version 4.0 for Matlab 7.3}. Numerical Algorithms, 2007, 46: 189--194.

\bibitem{HSY15}
Huang X, Shi L, and Yan M. \emph{Nonconvex sorted $\ell_1$ minimization for sparse approximation}.
J. Oper.\ Res.\ Soc.\ China, 2015, 3: 207--229.

%\bibitem{IK14}
%Ito K and Kunisch K. \emph{A variational approach to sparsity optimization based on Lagrange multiplier theory}.
%Inverse Problems, 2014, 30(1): 015001.
%
%\bibitem{JLS09}
%Jin B, Lorenz D A, and Schiffer S. \emph{Elastic-net regularization: error estimates and active set methods}.
%Inverse Problems, 2009, 25: 115022.

\bibitem{JM12}
Jin B and Maass P. \emph{Sparsity regularization for parameter identification problems}. Inverse Problems, 2012,
28(12): 123001.

\bibitem{JMS17}
Jin B, Maass P, and Scherzer O. \emph{Sparsity regularization in inverse problems}. Inverse Problems, 2017,
33: 060301.

%\bibitem{M1998Author21:Year} Megginson R E. \emph{An Introduction to Banach Space Theory}, New York Berlin Heidelberg: Springer-Verlag, 1998.

\bibitem{LPZ19}
Lazzaro D, Piccolomini E L, and Zama F. \emph{A nonconvex penalization algorithm with automatic choice of the regularization parameter in sparse imaging}. Inverse Problems, 2019, 35: 084002.

\bibitem{LCGK20}
Li P, Chen W, Ge H,
and K Ng M. \emph{$\ell_1$-$\alpha\ell_2$ minimization methods for signal and image reconstruction with impulsive noise removal}. Inverse Problems, 2020, 36: 055009.

\bibitem{LBDZZ09}
Loris I, Bertero M, De Mol C, Zanella R and Zanni L. \emph{Accelerating gradient projection methods for $\ell_1$-constrained signal recovery by steplength selection rules}. Applied and Computational Harmonic Analysis, 2009, 27: 247--254.

%\bibitem{LR17}
%Lorenz D A and Resmerita E. \emph{Flexible sparse regularization}. Inverse Problems, 2017, 33(1): 014002.

\bibitem{LV11}
Loris I and Verhoeven C. \emph{On a generalization of the iterative soft-thresholding algorithm for the case of non-separable penalty}. Inverse Problems, 2011, 27: 125007

\bibitem{LY18}
Lou Y and Yan M. \emph{Fast L1-L2 minimization via a proximal operator}. Journal of Scientific Computing, 2018, 74:
767--785.

\bibitem{MLP13}
Montefusco L B, Lazzaro D, and Papi S. \emph{A fast algorithm for nonconvex approaches to sparse recovery problems}. Signal Proc., 2013, 93: 2636--2647.

\bibitem{N05}
Nesterov Y. \emph{Smooth minimization of non-smooth functions}. Mathematical Programming, 2005, 103: 127--152.

%\bibitem{N13}
%Nikolova M. \emph{Description of the minimizers of least squares regularized with $\ell_0$-norm.
%Uniqueness of the global minimizer}. SIAM Journal on Imaging Sciences, 2013, 6(2): 904--937.


\bibitem{OBGXY05}
Osher S, Burger M, Goldfarb D, Xu J and Yin W. \emph{An iterative regularization method for total variation-based image restoration}. Multiscale Modeling \& Simulation, 2005, 4: 460--489.

\bibitem{R02}
Ramlau R. \emph{Morozov’s discrepancy principle for Tikhonov regularization of nonlinear operators}. Numer.\ Funct.\ Anal.\ and Opt., 2002, :23: 147--172.


\bibitem{RZ12}
Ramlau R and Zarzer C A. \emph{On the minimization of a Tikhonov functional with a non-convex
sparsity constraint}. Electronic Transactions on Numerical Analysis, 2012, 39: 476--507.

\bibitem{R1970}
Rockafellar R T.  \emph{Convex Analysis}, Princeton University Press, Princeton, NJ, 1970.



\bibitem{RW1998}
Rockafellar R T and Wets R J-B. \emph{Variational Analysis}, Berlin: Springer, 1998.

\bibitem{S93}
 Scherzer O. \emph{The use of Morozov’s discrepancy principle for Tikhonov regularization
for solving non-linear ill-posed problems}. SIAM J. Numer.\ Anal., 1993, 30: 1796--1838.

\bibitem{SGGHL2009}
Scherzer O, Grasmair M, Grossauer H, Haltmeier M and Lenzen F. \emph{Variational Methods in Imaging}.  Applied Mathematical Sciences, Vol.167, Newyork: Springer, 2009.

%\bibitem{SKHK2012}
%Schuster T, Kaltenbacher B, Hofmann B, and Kazimierski K. \emph{Regularization Methods in Banach Spaces},
%De Gruyter, 2012.

\bibitem{TB10}
Teschke G and Borries C. \emph{Accelerated projected steepest descent method for nonlinear inverse problems with sparsity constraints}. Inverse Problems, 2010, 26: 025007.

\bibitem{T96}

Tibshirani R. \emph{ Regression shrinkage and selection via the Lasso}. Journal of the Royal Statistical Society: Series B, 1996, 58: 267--288.

%\bibitem{WLHC19}
%Wang W, Lu S, Hofmann B, and Cheng J. \emph{Tikhonov regularization with $\ell_0$-term complementing a~convex penalty: $\ell_1$-convergence under sparsity constraints}. Journal of Inverse and Ill-Posed Problems, 2019, DOI: 10.1515/jiip-2019-0008.
%


\bibitem{TA1977}
Tikhonov A N and Arsenin V Y. \emph{Solutions of Ill-posed Problems}. Washington,
DC: V. H. Winston \& Sons, 1977.

\bibitem{TLY1998}
Tikhonov A N, Leonov A S and Yagola A G. \emph{Nonlinear Ill-posed Problems}. London:
Chapman \& Hall, 1998.

\bibitem{WLMC13}
Wang W, Lu S, Mao H, and Cheng J. \emph{Multi-parameter Tikhonov regularization with the $\ell_0$
sparsity constrain}. Inverse Problems, 2013, 29: 065018.

\bibitem{WNF09}
Wright S, Nowak R and Figueiredo M. \emph{Sparse reconstruction by separable approximation}. IEEE Transactions on Signal Processing, 2009, 57: 2479--2493.

%\bibitem{WYZ19}
%Wang Y, Yin W, and Zeng J. \emph{Global convergence of ADMM in nonconvex nonsmooth optimization}.
%Journal of Scientific Computing, 2019, arXiv:1511.06324.
%
%\bibitem{XZWCL10}
%Xu Z, Zhang H, Wang Y, Chang X, and Liang Y. \emph{$L_{1/2}$ regularization}.
%Science China: Information Sciences, 2010, 53(6): 1159--1169.

\bibitem{YSX17}
Yan L, Shin Y and Xiu D. \emph{Sparse approximation using $\ell_1$-$\ell_2$ minimization and its application
to stochastic collocation}. SIAM Journal of Scientific Computing, 2017, 39: A229-254.

\bibitem{YLHX15}
Yin P, Lou Y, He Q, and Xin J. \emph{Minimization of $\ell_{1-2}$ for compressed sensing}.
SIAM Journal on Scientific Computing, 2015, 37(1): A536--A563.

\bibitem{Z1985}
Zeidler E. \emph{Nonlinear Functional Analysis and its Application}. Volume 3, New York: Springer, 1985.

\end{thebibliography}
\end{document}